\documentclass[a4paper,10pt]{article}
\pagestyle{headings}
\usepackage[vlined]{algorithm2e}

\SetFuncSty{textsc}
\SetKwFunction{frun}{Run}
\SetKwHangingKw{arun}{\frun}
\SetKwFunction{fset}{Set}
\SetKwHangingKw{aset}{\fset}
\SetKwFunction{fselect}{Select}
\SetKwHangingKw{aselect}{\fselect}
\SetKwFunction{fcompute}{Compute}
\SetKwHangingKw{acompute}{\fcompute}
\SetKwFunction{fsolve}{Solve}
\SetKwHangingKw{asolve}{\fsolve}
\SetKwFunction{festimate}{Estimate}
\SetKwHangingKw{aestimate}{\festimate}
\SetKwFunction{fmark}{Mark}
\SetKwHangingKw{amark}{\fmark}
\SetKwFunction{frefine}{Refine}
\SetKwHangingKw{arefine}{\frefine}
\SetKwFunction{compute}{Compute}
\SetKwFunction{set}{Set}

{\algorithm}%
{\endalgorithm}

\title{Discontinuous Galerkin Finite Element Methods for the Landau-de Gennes Minimization Problem of Liquid Crystals }
\author{Ruma Rani Maity\footnote{
		Department of Mathematics, Indian Institute of Technology Bombay, Powai, Mumbai 400076, India. Email. ruma@math.iitb.ac.in} $\;\;$ Apala Majumdar* \footnote{* Department of Mathematical Sciences, University of Bath, Claverton Down, Bath BA2 7AY, United Kingdom. Visiting Professor,  Indian Institute of Technology Bombay, Powai, Mumbai 400076, India. Email. A.Majumdar@bath.ac.uk}        
	$\;\;$ Neela Nataraj\footnote{Department of Mathematics, Indian Institute of Technology Bombay, Powai, Mumbai 400076, India. Email. neela@math.iitb.ac.in}
}

\usepackage{amsmath,amsthm,amssymb,enumerate, amsbsy}
\usepackage[T1]{fontenc}
\usepackage{gensymb}
\usepackage[square,sort&compress,comma,numbers]{natbib}
\usepackage[sc]{mathpazo}
\usepackage{multirow} 
\usepackage{newtxtext,newtxmath}
\usepackage{subfig}
\usepackage{graphicx}
\usepackage{epstopdf}
\usepackage{cancel}
\usepackage[margin=3cm]{geometry}
\usepackage{tikz}
\usepackage[titletoc]{appendix}
\usepackage{caption}
\usetikzlibrary{shapes,calc}
\usepackage{verbatim}
\usepackage{mathrsfs}
\usepackage{accents}
\usepackage[utf8]{inputenc}
\usepackage{float}
\restylefloat{table}

\usetikzlibrary{decorations.pathmorphing}
\usetikzlibrary{decorations.pathreplacing}
\usetikzlibrary{positioning}
\usetikzlibrary{shapes}
\usetikzlibrary{arrows}
\usetikzlibrary{patterns}
\usetikzlibrary{fadings}
\usetikzlibrary{plotmarks}
\usetikzlibrary{calc}
\usetikzlibrary{intersections}
\tikzstyle{every picture}+=[font=\footnotesize]
\usepackage{paralist}
\usepackage{empheq}
\usepackage{bbm}
\usepackage{latexsym}           
\usepackage{enumerate}
\usepackage{enumitem}
\usepackage{algorithm2e}

\setlist{noitemsep, topsep=0.8ex, partopsep=0pt
	, leftmargin=3em}
\setlist[1]{labelindent=\parindent}

\newlist{axioms}{enumerate}{1}
\setlist[axioms]{font=\bfseries}

\newlist{alphenum}{enumerate}{1}
\setlist[alphenum]{label=\textbf{(\alph*)}, leftmargin=4em}

\newlist{alphienum}{enumerate}{1}
\setlist[alphienum]{label=\textit{(\alph*)}}

\newlist{romanenum}{enumerate}{1}
\setlist[romanenum]{label=\textit{(\roman*)}}

\newlist{romaninenum}{enumerate*}{1}
\setlist[romaninenum]{label=\textit{(\roman*)}}
\usepackage[vlined]{algorithm2e}
\SetKwIF{If}{ElseIf}{Else}{if}{}{else if}{else}{endif}
\SetKwFor{For}{for}{}{endfor}
\usepackage[noabbrev, capitalise]{cleveref}
\usepackage{tabu}
\tabulinesep=0.5ex
\crefname{equation}{\unskip}{\unskip}
\creflabelformat{equation}{#2(#1)#3}
\newtheorem{thm}{Theorem}[section]

\newtheorem{lem}[thm]{Lemma}

\theoremstyle{definition}

\theoremstyle{remark}

\newtheorem{rem}[thm]{Remark}

\newtheorem{example}{\bf Example}[subsection]

\numberwithin{equation}{section}

\newcommand{\e}{\mathbf{e}}
\newcommand{\R}{\mathbb{R}}

\newcommand{\V}{\mathbf{V}}
\newcommand{\X}{\mathbf{X}}
\newcommand{\h}{\mathbf{H}}

\newcommand{\abs}[1]{\left\lvert#1\right\rvert}
\newcommand{\norm}[1]{\left\lVert#1\right\rVert}
\newcommand{\dx}{\,\textrm{dx}}
\newcommand{\ds}{\,\textrm{ds}}

\newcommand{\E}{\textrm{E}}

\newcommand{\dg}{\textrm{dG}}

\newcommand{\vertiii}[1]{{\vert\kern-0.25ex\vert\kern-0.25ex\vert #1 
		\vert\kern-0.25ex \vert\kern-0.25ex \vert}}
\newcommand{\vertiiinc}[1]{{\vert\kern-0.25ex \vert\kern-0.25ex \vert #1 
		\vert\kern-0.25ex \vert\kern-0.25ex \vert}_{\textrm{NC}}}
\newcommand{\vertiiidg}[1]{{\vert\kern-0.25ex \vert\kern-0.25ex \vert #1 
		\vert\kern-0.25ex \vert\kern-0.25ex\vert}_{\textrm{dG}}}
\newcommand{\dual}[1]{\langle #1 \rangle}
\theoremstyle{plain}

\begin{document}
	
	\maketitle
	
	\begin{abstract}
		We consider a system of second order non-linear elliptic partial differential equations that models the equilibrium configurations of a two dimensional planar bistable  nematic liquid crystal device. Discontinuous Galerkin finite element methods are used to approximate the solutions of this nonlinear problem with non-homogeneous Dirichlet boundary conditions. A discrete inf-sup condition  demonstrates the stability of the discontinuous Galerkin discretization of a well-posed linear problem. We then establish the existence and local uniqueness of the discrete solution of the non-linear problem.
		An a priori error estimates in the energy and $\mathbf{L}^2$ norm are derived and a best approximation property is demonstrated. Further, we prove the quadratic convergence of Newton's iterates along with complementary numerical experiments.
	\end{abstract}
	\noindent {\bf Keywords:} nematic liquid crystals, energy optimization, Landau-de Gennes energy functional, discontinuous Galerkin finite element methods,
	error analysis, convergence rate
	
	\section{Introduction}
	Liquid crystals are a transitional phase of matter between the liquid and crystalline phases. They
	inherit versatile properties of both liquid phase (e.g. fluidity) and, crystalline phase (optical, electrical,
	magnetic anisotropy) which make them ubiquitous in practical applications ranging from wristwatch
	and computer displays, nanoparticle organizations to proteins and cell membranes.
	Liquid crystals present different phases as temperature varies. We consider the nematic phase formed by rod-like molecules that self-assemble into an ordered structure, such that the molecules tend to align along a preferred orientation. 
	There are three main mathematical models for nematic liquid crystals studied in literature which are the Oseen-Frank model \cite{Oseen1,Ball,energyminimizatnoseenfrank},  Ericksen model \cite{Erickson} and  Landau-De Gennes model \cite{Landau1, Majumdar2010, FiniteelementanalysisLDG}. 
	\medskip 
	
	We present some rigorous results for the discontinuous Galerkin (dG) formulation of a reduced two-dimensional Landau-de Gennes model, following the model problem studied in \cite{MultistabilityApalachong,Tsakonas}.
In the reduced Landau-de Gennes theory, the state of the nematic liquid crystal is described by a tensor order parameter,
	 $\displaystyle \textit{Q}:= s(2\mathbf{n}\otimes\mathbf{n}-\textit{I}),$
	where  $ \textit{Q}\in \mathbf{S}_0:=\lbrace \textit{Q}=(\textit{Q}_{ij})_{1\leq i,j \leq 2} \in \R^{2 \times 2}: \textit{Q}=\textit{Q}^T, \text{tr}\textit{Q}=0 \rbrace $ and $I$ is the $2 \times 2$ identity matrix. We refer to $'\mathbf{n}'$ as the director or the locally preferred in-plane alignment direction of the nematic molecules; $'s'$ is the scalar order parameter that measures the degree of order about $'\mathbf{n}'.$ We can write $'\mathbf{n}'$ as $\mathbf{n}=(\cos\theta,\sin\theta) $, where $\theta$ is the director angle in the plane. The general Landau-de Gennes $\textit{Q} $-tensor order parameter is a symmetric traceless $3\times 3$ matrix and we can rigorously justify our $2$-D approach in certain model situations (see \cite{wang_canevari_majumdar_SIAM2019}).
	\medskip 
	
	 Since $\textit{Q}$ is a symmetric, traceless $2\times 2$ matrix, we can represent $\textit{Q}$ as a two dimensional quantity, $ \Psi = (u,v)$ where $u=s \cos2\theta$ and $v=s \sin2\theta$. The stable nematic equilibria are minimizers of the Landau-de Gennes energy functional given by $\mathcal{E}(\textit{Q}):= \mathcal{E}_{B}(\textit{Q}) + \mathcal{E}_{E}(\textit{Q})+ \mathcal{E}_{S}(\textit{Q}) -\mathcal{E}_{L}(\textit{Q})$
	 	subject to appropriate boundary conditions. The bulk energy over the domain $\Omega$ is  given by  $\mathcal{E}_{B}(\textit{Q}):=\int_\Omega ( -\frac{\alpha^2}{2}\text{tr}{\textit{Q}}^2-\frac{b^2}{3}\text{tr}{\textit{Q}}^3+\frac{c^2}{4}(\text{tr}{\textit{Q}}^2)^2 ) \dx,$
	 	$ \alpha$, $b$ and $c$ being temperature and material dependent constants. 
	 	The one-constant elastic energy is  $\mathcal{E}_{E}(\textit{Q}):=\int_\Omega \frac{L_{el}}{2}\abs{\nabla\textit{Q}}^2 \dx$ with $  L_{el}>0 $ being an elastic constant.
	 	The surface anchoring energy is  $\mathcal{E}_{S}(\textit{Q}):=\int_{\partial\Omega} W \abs{ (\textit{Q}_{11},\textit{Q}_{12}) - \mathbf{g} }^2 \, \textrm{ds}$
	 	with the anchoring strength $ W$  on $\partial \Omega$ and  prescribed Lipschitz continuous boundary function $ \mathbf{g}:\partial\Omega \rightarrow \mathbb{R}^2$.
	  The electrostatic energy  $\mathcal{E}_{L}(\textit{Q}):=\int_{\Omega}-C_0 (\textit{Q}\textit{E}) \cdot \textit{E} \dx$
	with $C_0$ depending on vaccum permittivity and material dependent constants
	and $\textit{E}$ being the electric field vector. Here '$\cdot $' denotes the scalar product of vectors of  function entries.
	\medskip 
	
	In the absence of surface effects ($\mathcal{E}_{S}(\textit{Q})$) and external fields ($\mathcal{E}_{L}(\textit{Q})$), the Landau-de Gennes energy functional in the dimensionless form considered in this article is  given by \cite{MultistabilityApalachong}
	\begin{align} \label{eq:2.1.1}
	\mathcal{E}(\Psi_\epsilon) =\int_\Omega (\abs{\nabla \Psi_\epsilon}^2 + 
	\epsilon^{-2}(\abs{\Psi_\epsilon}^2  - 1)^2) \dx, 
	\end{align}
	where $\Psi_\epsilon=(u,v)\in \mathbf{H}^1(\Omega):=H^1(\Omega) \times H^1(\Omega)$ and $\epsilon$ is a small positive parameter that depends on  the elastic constant, bulk energy parameters and the size of the domain. More precisely, (see \cite{MultistabilityApalachong}) after rescaling and suitable non-dimensionalization, $\epsilon = \sqrt{\frac{L_{el}}{L^2c^2}} $ is proportional to the ratio of the nematic correlation length to a characteristic domain size. The nematic correlation length (usually on the scale of nanometers) is typically related to defect core sizes. The  $\epsilon \rightarrow 0$ limit describes the macroscopic limit where the domain size is much larger than the correlation length i.e. describes micron scale geometries or even larger geometries. Some typical values \cite{MultistabilityApalachong} of these physical parameters are $L_{el}= 10^{-11} \,\text{N m} $, $L= 8 \times 10^{-5} \text{ m}$  and $c^2=1 \times 10^6\, \text{N  m}^{-1}$. We are concerned with the minimization of the functional $\mathcal{E}$ for the Lipschitz continuous boundary function $ \mathbf{g}:\partial\Omega \rightarrow \mathbb{R}^2$. More precisely, the admissible space is 
	$\X=\left \{ \mathbf{w} \in \mathbf{H}^1(\Omega) :
	\mathbf{w}= \mathbf{g} \,\, \text{on} \,\, \partial \Omega   \right \}.
	$
	The strong formulation of \eqref{eq:2.1.1} seeks $\Psi_\epsilon \in \mathbf{H}^1(\Omega) $ such that 
	\begin{align}  \label{2.3.1.2} 
	-\Delta \Psi_\epsilon =2\epsilon^{-2}(1-\abs{\Psi_\epsilon}^2)\Psi_\epsilon \text{ in } \Omega \,\, \text{ and }\,\, \Psi_\epsilon = \mathbf{g} \text{ on } \partial \Omega. 
	\end{align}
	The behavior of $\Psi_\epsilon$ as $\epsilon \rightarrow 0 $ has been studied by Brezis et al in \cite{Bethuel} with the assumption $0<\epsilon <1$ and an $\epsilon$ independent bound for $\vertiii{\Psi_\epsilon}_{\mathbf{H}^2}$ has been established. The minimizers of the energy functional \eqref{eq:2.1.1} for a given smooth boundary data $\mathbf{g}$ with $\abs{\mathbf{g}}=1 \text{ on } \partial\Omega$ for a  smooth bounded domain has been well-studied \cite{Bethuel}. Further, the authors rigorously prove in \cite{Bethuel} that $\Psi_\epsilon \rightarrow \Psi_0$ as $\epsilon \rightarrow 0 $ in $C^{1,\alpha}(\bar{\Omega})$ \: $ \text{ for all }\alpha <1 $, where $\Psi_0 $ is a harmonic map and is a solution of 
	$-\Delta \Psi_0= \Psi_0 \abs{\nabla \Psi_0}^2 \text{ on } \Omega,$
	$\abs{ \Psi_0}= 1 $  { on } $\Omega,$ \;
	$\Psi_0 =  \mathbf{g}$  {on}  $\partial \Omega$.
We are interested in the dG finite element approximation of regular solutions of the boundary value problem \eqref{2.3.1.2}.
Let  ${\rm N}: \mathbf{H}^1(\Omega) \rightarrow \mathbf{H}^1(\Omega)^*$ be 
the Ginzburg-Landau operator  defined as 
${\rm N}(\Psi_\epsilon):=-\Delta \Psi_\epsilon +2\epsilon^{-2}(\abs{\Psi_\epsilon}^2-1)\Psi_\epsilon$. 
  For $\epsilon$ small enough, the linearized operator $ \displaystyle L_\epsilon:={\rm DN}({\Psi_\epsilon})$ is bijective when defined between standard spaces (e.g., $L_\epsilon: \mathbf{W}^{2,2} \rightarrow \mathbf{L}^2 $) \cite{pacard2000linear},  although the norm of its inverse blows up as $\epsilon \rightarrow 0$.
	\medskip
		
	This model problem has been studied using the conforming finite element method for \eqref{2.3.1.2}  for a {\it fixed } $\epsilon$  in \cite{MultistabilityApalachong} on a square domain with $\mathbf{g}$ consistent tangent boundary conditions. The variational formulation of a more generic Landau-de Gennes model with homogeneous Dirichlet boundary data has been studied \cite{FiniteelementanalysisLDG} in which an  abstract approach of the finite element approximation of nonsingular solution branches has been analyzed in the conforming finite element set up. However the analysis for the non-homogeneous boundary conditions has not been considered in this work. In  \cite{energyminimizatnoseenfrank} and \cite{ericksonmixed}, the authors have discussed a mixed finite element method  for the  Frank-Oseen and Ericksen-Leslie models for nematic liquid crystals, respectively. \medskip 
	
	It is well-known that Allen-Cahn equation \cite{allencahn} is the gradient flow equation associated with the Lyapunov energy functional 
	$\mathcal{J}_\epsilon(u)=\frac{1}{2} \int_{\Omega}(\abs{\nabla u }^2 +  \frac{\epsilon^{-2}}{2}(\abs{u}^2-1)^2)\dx,$ where $u:\Omega \rightarrow R$ is a  scalar valued function and $\epsilon$ is a small parameter (not to be confused with the $\epsilon$ in \eqref{2.3.1.2})
	known as an “interaction length” which is small compared to the characteristic dimensions on
	the laboratory scale. 
	 Numerical approximations of the Allen-Cahn equation have been extensively investigated
	in the literature. \textit{A priori} error estimate for the error bounds as a function of $\epsilon$ have been analyzed and shown to be of polynomial order in $\epsilon^{-1}$  by Feng and Prohl in \cite{Fengprohl}, for the conforming finite element approximation of the Allen-Cahn problem. A symmetric interior penalty discontinuous Galerkin method \cite{FengLi}  and a posteriori error analysis \cite{Feng2005, Bartels} (which has a low order polynomial dependence on  $\epsilon^{-1}$)
	 have also been studied  for Allen-Cahn equation. A dG scheme has been proposed recently in \cite{Antonopoulou} for the  $\epsilon$- dependent stochastic Allen–Cahn equation with mild space-time noise posed on a bounded domain of $\mathbb{R}^2$. The Allen Cahn work is relevant to time-dependent front propagation in nematic liquid crystals, in certain reduced symmetric situations \cite{SPICER_MAJUMDAR_MILEWSKI_2016}.
	 The problem considered in this article is different from the Allen Cahn equation; we have a system of two coupled nonlinear partial differential equations (PDEs) in a time independent scenario. 
	 \medskip 
	 
	 Our motivation comes from the planar bistable nematic device reported in \cite{MultistabilityApalachong}. This device consists of a periodic array of shallow square or rectangular wells, filled with nematic liquid crystals, subject to tangent boundary conditions on the lateral surfaces. The vertical well height is much smaller than the cross-sectional dimensions and hence, it is reasonable to assume invariance in the vertical direction and to model the profile on the bottom cross-section, taking the domain to be a square as opposed to a three-dimensional square well. This model reduction can be rigorously justified using gamma-convergence techniques \cite{golovaty2015, wang_canevari_majumdar_SIAM2019}. The tangent boundary conditions require that the nematic director, identified with the leading eigenvector $\mathbf{n}$ of the Landau-de Gennes $\textit{Q}$-tensor order parameter, lies in the plane of the square and $\mathbf{n}$ is tangent to the square edges. Indeed, this motivates the choice of the Dirichlet conditions for $\textit{Q}$ on the square edges as described below. The tangent boundary conditions can also be phrased in terms of surface anchoring energies but this results in mixed boundary-value problems for $\textit{Q}$ which are relatively more difficult to analyse than the Dirichlet counterparts. Our results can be applied to the planar bistable nematic device; we can model a single well as a square domain with Dirichlet tangent boundary conditions on the square edges and analyze discontinuous Galerkin finite element methods (dGFEMs) to approximate regular solutions of \eqref{2.3.1.2} for a fixed $\epsilon$.
	 This involves a semilinear system of PDEs with a \textit{cubic nonlinearity} (see \eqref{2.3.1.2}) and \textit{non-homogeneous boundary conditions}. As reported in \cite{MultistabilityApalachong, Tsakonas}, there are six experimentally observed stable nematic equilibria, labeled as diagonal and rotated states, for this model problem. The nematic director roughly aligns along one of the square diagonals in the diagonal states whereas the director rotates by $\pi$ radians between a pair of opposite parallel edges, in a rotated state. There are two diagonal states, since there are two square diagonals, and four rotated states related to each other by a $\frac{\pi}{2}$ rotation. 
	The dGFEMs are attractive because they are element-wise
	conservative, are flexible with respect to local mesh adaptivity, are easier to implement than finite volume
	schemes, allow for non-uniform degrees of approximations for solutions with variable regularity over the computational domain and can handle non-homogeneous boundary condition in a natural way. These methods also relax the inter-element continuity requirement in conforming FEM. An {\it a priori}  error analysis of dGFEMs for general
	elliptic problems has been derived in \cite{Prudhomme,Gudi2008,Gudi2010}. For a comprehensive study of several dGFEMs applied to elliptic problems, see \cite{ArnoldDouglasBrezzi}. The dGFEMs are also well studied for fourth order elliptic problems \cite{Gaurang,Houston}. Recently, dGFEMs have been studied for the von Kármán equations \cite{Gaurang} that involves a quadratic non-linearity and homogeneous boundary conditions.
	\medskip

	To the best of our knowledge, dGFEMs have not been analysed for the nonlinear system derived from the reduced two-dimensional Landau-de Gennes energy in \eqref{eq:2.1.1} and this is the primary motivation for our study.
	Our contributions can be summarized as follows.
	\begin{enumerate}
		\item We derive an elegant representation of the nonlinear operator, convergence analysis with $h $-$\epsilon $\textit{ dependency} and \textit{a priori} error estimate of a dGFEM formulation for the system \eqref{2.3.1.2} with \textit{non homogeneous boundary conditions}. The choice of the discretization parameter $h$ that depends on $\epsilon$ ensures that (i)
			the dG discretization of a linearized problem is well-posed and (ii) the corresponding discrete non-linear problem has a unique solution following applications of contraction mapping theorem.
		\item We prove a best approximation result for  regular solutions of the non-linear problem \eqref{2.3.1.2}.
		\item We prove the quadratic convergence of the Newton's iterates to the approximate dGFEM solution.
		\item Our numerical results confirm the theoretical orders of convergence and rate of convergence as a function of $h$ and $ \epsilon$, in the context of the planar bistable nematic device and other representative examples.
	\end{enumerate}
	\medskip
	
	Throughout the paper, standard notations on Sobolev spaces and their norms are employed. The standard semi-norm and norm on $H^s(\Omega)$ $(\text{resp.} \,W^{s,p}(\Omega))$ for $s,p$ positive real numbers, are denoted by $\abs{\cdot}_s$ and $\norm{\cdot}_s$ $(\text{resp. } \abs{\cdot}_{s,p} \text{ and } \norm{\cdot}_{s,p} )$. The standard $L^2(\Omega)$ inner product is denoted by $(\cdot,\cdot)$. We use the notation $\mathbf{H}^s(\Omega)$ (resp.\,$\mathbf{L}^p(\Omega)$) to denote the product space $H^s(\Omega) \times H^s(\Omega)$ $(\text{resp. } L^p(\Omega) \times L^p(\Omega))$. The standard norms $\vertiii{\cdot}_s$ ($\text{resp. }\vertiii{\cdot}_{s,p}$) in the Sobolev spaces $\mathbf{H}^s(\Omega)$ ($\text{resp. }\mathbf{W}^{s,p}(\Omega)$) defined by $\vertiii{\Phi}_s\!=(\norm{\phi_1}_s^2+\norm{\phi_2}_s^2)^{\frac{1}{2}}$ for all $ \Phi\!=\!(\phi_1, \phi_2) \in \!\mathbf{H}^s(\Omega) $ $ ( \text{resp.  }\vertiii{\Phi}_{s,p}\!\!=(\norm{\phi_1}_{s,p}^2+\norm{\phi_2}_{s,p}^2)^{\frac{1}{2}}$ for all $\Phi\!=\!(\phi_1, \phi_2) \!\in \!\mathbf{W}^{s,p}(\Omega)).$
	The norm on $\mathbf{L}^2(\Omega)$ space is defined by $\vertiii{\Phi}_0\!=(\norm{\phi_1}_0^2+\norm{\phi_2}_0^2)^{\frac{1}{2}}$ for all $ \Phi\!=\!(\phi_1, \phi_2) \in \!\mathbf{L}^2(\Omega). $
	Set $\textrm{V}:=H_0^1(\Omega)= \left\{\phi \in L^2(\Omega): \frac{\partial \phi}{\partial x}, \frac{\partial \phi}{\partial y}
	\in L^2(\Omega)\,\, , \phi|_{\partial {\Omega}}=0 \right\}$ and $ \V\!= \,\!\mathbf{H}_0^1(\Omega)=H_0^1(\Omega) \times H_0^1(\Omega)$. The inequality $ a \lesssim b $ abbreviates $ a \leq Cb$ with the constant $C>0$ independent of mesh-size parameter $'h'$ and $'\epsilon'$. The constants that appear in various Sobolev imbedding results in the sequel are denoted using a generic notation $C_S$.  
	\medskip 
	
	The paper is organised as follows. In Section \ref{preliminaries}, we present the model problem along with the weak formulation and some preliminary results. We state the dG finite element formulation of the problem in Subsection \ref{dg_formulation} and our main results are stated in Subsection \ref{section 3.1}. The existence and uniqueness of the discrete solution of the non-linear problem, error estimates, best approximation result and the convergence of Newton's method are presented as main theorems. Section \ref{auxiliary results} contains some auxiliary results needed to prove the main results. A discrete inf-sup condition for a discrete bilinear form has been established. In Section \ref{main result}, we prove the main theorems. A contraction map has been defined on the discrete space to use a fixed point argument for proving the existence and uniqueness of discrete solution. An alternative proof of the existence and uniqueness of the solution of the discrete problem using Newton-Kantorovich theorem has been given in this section. This is followed by a proof of the quadratic convergence of Newton's method and numerical results that are consistent with the theoretical results in Section \ref{numerical}.
	\section{ Preliminaries } \label{preliminaries} 
	In this section, we introduce the weak formulation for \eqref{2.3.1.2}  and establish some boundedness results. The details of derivation of the weak formulation in presented in \ref{Weak_formulation}.\\
In the weak formulation of \eqref{2.3.1.2},  we seek $\Psi_\epsilon \in \mathbf{X}$ such that
	\begin{align} \label{eq:2.1.5}
	{\rm N}(\Psi_\epsilon;\Phi):=A(\Psi_\epsilon,\Phi)+B(\Psi_\epsilon,\Psi_\epsilon,\Psi_\epsilon,\Phi)+C(\Psi_\epsilon,\Phi)=0 \,\,\,\, \text{ for all } \Phi \in \V,
	\end{align}
	where for all $ \Xi=(\xi_1,\xi_2), \boldsymbol \eta= (\eta_1, \eta_2), \Theta=(\theta_1,\theta_2), \Phi=(\phi_1,\phi_2)  \in  \mathbf{H}^1(\Omega) $  and $C_\epsilon=2\epsilon^{-2},$
	\begin{align}
	&A(\Theta,\Phi):=a(\theta_1,\phi_1)+ a(\theta_2,\phi_2),\,\,
	C(\Theta,\Phi):=c(\theta_1,\phi_1)+ c(\theta_2,\phi_2),\,\,\, \label{eq:2.1.6}\\&
	B(\Xi,\boldsymbol \eta,\Theta,\Phi):=\frac{C_\epsilon}{3}\int_\Omega \left((\Xi \cdot \boldsymbol \eta)(\Theta \cdot \Phi)+2(\Xi \cdot \Theta)(\boldsymbol \eta \cdot \Phi)\right) \dx =\frac{1}{3}(3b(\xi_1,\eta_1,\theta_1,\phi_1)+3b(\xi_2,\eta_2,\theta_2,\phi_2) \notag \\& 
	\qquad \qquad \qquad \qquad + 2b(\xi_2,\eta_1,\theta_2,\phi_1)+ 2 b(\xi_1,\eta_2,\theta_1,\phi_2)+ b(\xi_2,\eta_2,\theta_1,\phi_1)+b(\xi_1,\eta_1,\theta_2,\phi_2)),\label{eq:2.1.7}
	\\&
 \text{and for } \xi,\eta,\theta,\phi \in H^1(\Omega), 
	\:  a(\theta,\phi):=\int_{\Omega}  \nabla \theta \cdot \nabla \phi \dx,  \: b(\xi,\eta,\theta,\phi):= C_\epsilon\int_\Omega \xi\eta\theta \phi \dx, \nonumber \\ 
	& \text{ and }  c(\theta,\phi):=-C_\epsilon \int_\Omega \theta \phi \dx. \notag
	\end{align}
	\begin{rem} 
		When $\Xi=\boldsymbol \eta = \Theta= \Psi_\epsilon:=(u,v)$,
		\begin{align*}
		B(\Psi_\epsilon,\Psi_\epsilon,\Psi_\epsilon,\Phi) = b(u,u,u,\phi_1)+b(v,v,u,\phi_1)+b(v,v,v,\phi_2)+b(u,u,v,\phi_2).
		\end{align*}
		The operator $B(\cdot, \cdot, \cdot,\cdot)$ corresponds to the non-linear part of the system \eqref{2.3.1.2}. Such representation of $B(\cdot, \cdot, \cdot,\cdot)$ yields nice properties proven in Lemma \ref{2.6} which makes the analysis elegant. 	
	\end{rem}
	\begin{lem}\label{4.2}\emph{(Poincar\'e inequality)}\cite{KesavaTopicsFunctinal}
	Let $\Omega$ be a bounded open Lipschitz domain in $\mathbb{R}^2.$ Then there exists a positive constant $\alpha_0 =\alpha_0(\Omega)$ such that 
	\begin{align*}
	\alpha_0 \norm{\phi}_{L^2(\Omega)} \leq \abs{\phi}_{1,\Omega} \,\,\text{ for all } \phi \in H^1_0(\Omega).
	\end{align*}
    \end{lem}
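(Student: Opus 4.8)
The plan is to prove the inequality first for smooth, compactly supported functions by a one–dimensional integration argument, and then to transfer it to all of $H^1_0(\Omega)$ by density. Since $\Omega$ is bounded, I would enclose it in an axis-parallel rectangle $R=(a,b)\times(c,d)$. For $\phi\in C_c^\infty(\Omega)$ let $\tilde\phi$ be its extension by zero to $R$; because $\phi$ vanishes in a neighbourhood of $\partial\Omega$, the extension $\tilde\phi$ is genuinely smooth and compactly supported in $R$, and its partial derivatives are the zero-extensions of those of $\phi$, so no Sobolev subtlety arises at this stage.

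The key step is then the fundamental theorem of calculus in the first variable. Since $\tilde\phi(a,y)=0$ for every $y\in(c,d)$, one has $\tilde\phi(x,y)=\int_a^x \partial_x\tilde\phi(t,y)\,dt$, and the Cauchy–Schwarz inequality in $t$ together with $x-a\le b-a$ gives the pointwise bound
\[
\abs{\tilde\phi(x,y)}^2 \le (b-a)\int_a^b \abs{\partial_x\tilde\phi(t,y)}^2\,dt .
\]
Integrating this estimate in $x$ over $(a,b)$ and then in $y$ over $(c,d)$, and using that $\int_R \abs{\partial_x\tilde\phi}^2 = \int_\Omega \abs{\partial_x\phi}^2\dx \le \abs{\phi}_{1,\Omega}^2$, yields
\[
\norm{\phi}_{L^2(\Omega)}^2 \le (b-a)^2\int_\Omega \abs{\partial_x\phi}^2\dx \le (b-a)^2\,\abs{\phi}_{1,\Omega}^2 .
\]
Hence the asserted inequality holds on $C_c^\infty(\Omega)$ with the explicit constant $\alpha_0=(b-a)^{-1}$, depending only on the width of a bounding box for $\Omega$.

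Finally I would pass to the limit. By the definition of $H^1_0(\Omega)$ as the closure of $C_c^\infty(\Omega)$ in the $H^1$-norm, any $\phi\in H^1_0(\Omega)$ is the $H^1$-limit of a sequence $\phi_n\in C_c^\infty(\Omega)$; then $\norm{\phi_n}_{L^2(\Omega)}\to\norm{\phi}_{L^2(\Omega)}$ and $\abs{\phi_n}_{1,\Omega}\to\abs{\phi}_{1,\Omega}$, so taking the limit in $\alpha_0\norm{\phi_n}_{L^2(\Omega)}\le\abs{\phi_n}_{1,\Omega}$ preserves the inequality and completes the argument.

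The result is entirely classical and presents no serious obstacle; the only point meriting attention is the closing density step, which rests on the defining property that $C_c^\infty(\Omega)$ is dense in $H^1_0(\Omega)$. I remark that this direct route uses only the boundedness of $\Omega$ (through the enclosing rectangle), so the Lipschitz hypothesis is not actually needed here. A coordinate-free alternative is a contradiction argument combining the compact embedding $H^1_0(\Omega)\hookrightarrow\hookrightarrow L^2(\Omega)$ with the fact that a function in $H^1_0(\Omega)$ whose weak gradient vanishes must be zero; this avoids the bounding-box construction but yields only a non-explicit constant $\alpha_0$.
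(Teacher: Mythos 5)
Your proof is correct, but there is nothing in the paper to compare it against: Lemma \ref{4.2} is stated there as a quoted classical result with a citation to \cite{KesavaTopicsFunctinal}, and no proof is given in the text. Your argument is the standard self-contained one: enclose $\Omega$ in a box, use the fundamental theorem of calculus in one variable together with Cauchy--Schwarz for $C_c^\infty$ functions, then pass to $H^1_0(\Omega)$ by density. Each step checks out, and it has the merit of producing an explicit constant $\alpha_0=(b-a)^{-1}$ depending only on the width of a bounding slab. Your closing remarks are also accurate and worth keeping in mind: for the $H^1_0$ version of the inequality the Lipschitz hypothesis in the lemma's statement is superfluous (boundedness of $\Omega$ in one direction suffices, precisely because zero-extension of $H^1_0$ functions costs nothing), whereas the Lipschitz assumption becomes relevant elsewhere in the paper where trace, extension, or mean-value Poincar\'e-type results on $H^1(\Omega)$ or on broken spaces are invoked. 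The alternative compactness argument you sketch (Rellich embedding plus the fact that a gradient-free $H^1_0$ function vanishes) is equally valid for bounded open sets, but, as you note, it is non-constructive; for a paper that tracks how constants depend on $\epsilon$ and $h$, the explicit-constant route is the more natural one.
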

	\noindent The boundedness and coercivity of $A(\cdot,\cdot)$ and the boundedness of $C(\cdot,\cdot)$ given below can be easily verified. For all $\Theta$, $\Phi \in \V$,
	\begin{align}
	&A(\Theta,\Phi)\leq \vertiii{\Theta}_1 \vertiii{\Phi}_1, \quad  A(\Theta,\Theta) \geq C_{\alpha_0} \vertiii{\Theta}_1^2, \label{2.3}\\&
 \text{ and }	C(\Theta,\Phi)\leq  C_\epsilon \vertiii{\Theta}_1 \vertiii{\Phi}_1, \quad \quad \quad \,\, \label{eq:2.1.5.4}
	\end{align}
where $C_{\alpha_0}$ depends on $\alpha_0.$	
	The next lemma establishes two boundedness results for $B(\cdot,\cdot,\cdot,\cdot)$.
	\begin{lem} \emph{(Boundedness of $B(\cdot,\cdot,\cdot, \cdot) $ )}\label{2.6}
		For all  $\Xi$, $\boldsymbol \eta$, $\Theta$, $\Phi \in \V$, 
		\begin{align} \label{eq:2.1.5.6}
		B(\Xi,\boldsymbol \eta,\Theta,\Phi)\lesssim \epsilon^{-2} \vertiii{\Xi}_1\vertiii{\boldsymbol \eta}_1 \vertiii{\Theta}_1 \vertiii{\Phi}_1,
		\end{align} 
		and for all  $\Xi , \boldsymbol \eta \in \mathbf{H}^2(\Omega)$, $\Theta$, $\Phi \in \V$,
		\begin{align}\label{eq:2.1.5.3}
		B(\Xi,\boldsymbol \eta,\Theta,\Phi)\lesssim \epsilon^{-2}  \vertiii{\Xi}_2\vertiii{\boldsymbol \eta}_2\vertiii{\Theta}_0 \vertiii{\Phi}_0.
		\end{align}
	\end{lem}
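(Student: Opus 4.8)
The plan is to estimate $B(\Xi,\boldsymbol\eta,\Theta,\Phi)$ by reducing it to its defining integral and then applying Hölder's inequality followed by Sobolev embeddings, choosing the embedding to match the regularity available in each of the two cases. Recall from \eqref{eq:2.1.7} that, up to the constant factor $C_\epsilon/3 = \tfrac{2}{3}\epsilon^{-2}$, the quantity $B(\Xi,\boldsymbol\eta,\Theta,\Phi)$ is a sum of integrals of the form $\int_\Omega \xi\,\eta\,\theta\,\phi\dx$ where $\xi,\eta,\theta,\phi$ are scalar components of $\Xi,\boldsymbol\eta,\Theta,\Phi$ respectively. Thus it suffices to bound a single scalar quadrilinear term $\int_\Omega \xi\eta\theta\phi\dx$ and then sum over the finitely many terms; the $\epsilon^{-2}$ appears naturally from $C_\epsilon$.

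For the first estimate \eqref{eq:2.1.5.6}, all four arguments lie only in $\V=\mathbf{H}^1_0(\Omega)$, so I would apply Hölder's inequality with four equal exponents $p=4$, giving
\begin{align*}
\Big|\int_\Omega \xi\eta\theta\phi\dx\Big| \leq \norm{\xi}_{L^4}\norm{\eta}_{L^4}\norm{\theta}_{L^4}\norm{\phi}_{L^4}.
\end{align*}
Since $\Omega\subset\mathbb{R}^2$, the Sobolev embedding $H^1(\Omega)\hookrightarrow L^4(\Omega)$ holds (indeed $H^1$ embeds into every $L^p$, $p<\infty$, in two dimensions), so each $\norm{\cdot}_{L^4}\lesssim\vertiii{\cdot}_1$, and collecting the constant $C_\epsilon$ and the Sobolev constants $C_S$ yields the claimed bound with the factor $\epsilon^{-2}$.

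For the second estimate \eqref{eq:2.1.5.3}, the two arguments $\Xi,\boldsymbol\eta$ enjoy $\mathbf{H}^2$ regularity while $\Theta,\Phi$ only lie in $\V$, and the right-hand side asks for $\vertiii{\Theta}_0$ and $\vertiii{\Phi}_0$, i.e.\ only $\mathbf{L}^2$ control on the latter two. Accordingly I would apply Hölder with exponents matched so that $\theta$ and $\phi$ are measured in $L^2$: write $\int_\Omega \xi\eta\theta\phi\dx$ and bound it by $\norm{\xi}_{L^\infty}\norm{\eta}_{L^\infty}\norm{\theta}_{L^2}\norm{\phi}_{L^2}$, then use the embedding $H^2(\Omega)\hookrightarrow L^\infty(\Omega)$ (valid in two dimensions, with constant $C_S$) to replace $\norm{\xi}_{L^\infty}\lesssim\vertiii{\xi}_2$ and similarly for $\eta$. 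Summing over the component terms and absorbing $C_\epsilon=2\epsilon^{-2}$ produces \eqref{eq:2.1.5.3}.

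The steps are routine once the correct pairing of norms is identified; the only point requiring care—and the main ``obstacle,'' such as it is—is choosing the Hölder exponents and embeddings consistently with the norms demanded on the right-hand side, namely the $L^4$ distribution across all four factors in the first case versus the $L^\infty$–$L^\infty$–$L^2$–$L^2$ distribution in the second. Both embeddings are special to the planar setting $\Omega\subset\mathbb{R}^2$; in higher dimensions $H^2\hookrightarrow L^\infty$ would fail and a different argument would be needed. I would also remark that the explicit expansion of $B$ in \eqref{eq:2.1.7} guarantees that every summand is exactly of the scalar quadrilinear form treated above, so no cross terms of a different structure arise.
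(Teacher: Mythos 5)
Your proposal is correct and follows essentially the same route as the paper's proof: reduce $B(\cdot,\cdot,\cdot,\cdot)$ to the scalar terms $b(\cdot,\cdot,\cdot,\cdot)$, then use H\"older's inequality with the embedding $H^1(\Omega)\hookrightarrow L^4(\Omega)$ for \eqref{eq:2.1.5.6}, and the embedding $H^2(\Omega)\hookrightarrow L^\infty(\Omega)$ together with the Cauchy--Schwarz inequality for \eqref{eq:2.1.5.3}. No gaps; the exponent pairings and the absorption of $C_\epsilon$ and the Sobolev constants match the paper exactly.
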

	\begin{proof}
		It is enough to prove the results for $b(\cdot,\cdot,\cdot,\cdot)$;
		 then \eqref{eq:2.1.5.6} and \eqref{eq:2.1.5.3} follow from the definition of $B(\cdot,\cdot,\cdot,\cdot)$ and a grouping of the terms. 
		For $\xi,\eta, \theta, \phi \in H^1(\Omega)$, a use of H\"older's inequality and the Sobolev imbedding result ${H}^1(\Omega) \hookrightarrow  L^4(\Omega) $ \cite{Evance19} leads to \eqref{eq:2.1.5.6} as
		\begin{align*}
		b(\xi,\eta, \theta, \phi)
		\leq C_\epsilon
		\norm{\xi}_{0,4} \norm{\eta}_{0,4} \norm{\theta}_{0,4} \norm{\phi}_{0,4} \lesssim \epsilon^{-2} \norm{\xi}_1 \norm{\eta}_1 \norm{\theta}_1 \norm{\phi}_1. 
		\end{align*}	
		 For $\xi$, $\eta \in {H}^2(\Omega)$, a use of the 
		Sobolev imbedding result ${H}^2(\Omega)\hookrightarrow L^\infty(\Omega)$ and the Cauchy-Schwarz inequality leads to  
		\begin{align*}
		b(\xi,\eta, \theta, \phi)\lesssim \epsilon^{-2} \norm{\xi}_{0,\infty}  \norm
		{\eta}_{0,\infty}  \norm{\theta}_0 \norm{\phi}_0 \lesssim  \epsilon^{-2} \norm{\xi}_2  \norm
		{\eta}_2  \norm{\theta}_0 \norm{\phi}_0 \,\, \text{for all }  \theta, \phi \in \textrm{V},
		\end{align*} 
		where $"\lesssim"$ in the last two inequalities above absorbs  $C_S$ and the constant from  $C_\epsilon$.
	\end{proof}	
 The existence of minimizers of \eqref{eq:2.1.1} follows from the coercivity of $\mathcal{E}$ and its convexity in  $\nabla \Psi_\epsilon$ in \eqref{eq:2.1.1} and this implies the existence of a solution of the non-linear system in \eqref{eq:2.1.5}. The regularity result in the next lemma follows from arguments in \cite{FiniteelementanalysisLDG, grisvard} and in detailed in \ref{Regularity result}.
	\begin{lem} [Regularity result]\label{regularity}
		Let $\Omega$ be an open, bounded, Lipschitz and convex domain of $\mathbb{R}^2.$ Then for $\mathbf{g} \in \mathbf{H}^{\frac{3}{2}}(\partial \Omega),$ any solution of \eqref{2.3.1.2} belongs to  $\h^{2}(\Omega)$.
	\end{lem}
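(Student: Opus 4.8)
The plan is to freeze the cubic nonlinearity, recast \eqref{2.3.1.2} as a linear Poisson system with an $\mathbf{L}^2(\Omega)$ source, and then invoke the full $\mathbf{H}^2$ elliptic regularity that holds on convex Lipschitz domains. Let $\Psi_\epsilon \in \X$ be any solution of \eqref{2.3.1.2}; in particular $\Psi_\epsilon \in \mathbf{H}^1(\Omega)$ with trace $\mathbf{g}$ on $\partial\Omega$. Writing $\mathbf{f} := 2\epsilon^{-2}(1-\abs{\Psi_\epsilon}^2)\Psi_\epsilon$, the equation reads $-\Delta \Psi_\epsilon = \mathbf{f}$ in $\Omega$ with $\Psi_\epsilon = \mathbf{g}$ on $\partial\Omega$.

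First I would show $\mathbf{f} \in \mathbf{L}^2(\Omega)$. The linear part $2\epsilon^{-2}\Psi_\epsilon$ trivially lies in $\mathbf{L}^2(\Omega)$. For the cubic part, the two-dimensional Sobolev embedding $H^1(\Omega) \hookrightarrow L^p(\Omega)$, valid for every finite $p$, gives $\Psi_\epsilon \in \mathbf{L}^6(\Omega)$; hence each entry of $\abs{\Psi_\epsilon}^2\Psi_\epsilon$ is a product of three $L^6(\Omega)$ functions and so belongs to $L^2(\Omega)$, with $L^2$ norm controlled by a power of $\vertiii{\Psi_\epsilon}_1$. Thus $\mathbf{f} \in \mathbf{L}^2(\Omega)$.

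Next I would homogenize the boundary condition. Since $\mathbf{g} \in \mathbf{H}^{3/2}(\partial\Omega)$, the trace theorem furnishes a lifting $\mathbf{G} \in \mathbf{H}^2(\Omega)$ with $\mathbf{G} = \mathbf{g}$ on $\partial\Omega$. Setting $\mathbf{w} := \Psi_\epsilon - \mathbf{G} \in \V$ yields the homogeneous Dirichlet problem $-\Delta \mathbf{w} = \mathbf{f} + \Delta \mathbf{G} =: \tilde{\mathbf{f}}$ in $\Omega$, $\mathbf{w} = \mathbf{0}$ on $\partial\Omega$, and $\tilde{\mathbf{f}} \in \mathbf{L}^2(\Omega)$ because both $\mathbf{f}$ and $\Delta \mathbf{G}$ do.

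The decisive step is to apply the elliptic regularity of the Dirichlet Laplacian on a convex domain: since $\Omega$ is bounded, Lipschitz and \emph{convex}, the weak solution of $-\Delta \mathbf{w} = \tilde{\mathbf{f}} \in \mathbf{L}^2(\Omega)$ with homogeneous Dirichlet data satisfies $\mathbf{w} \in \mathbf{H}^2(\Omega)$ together with an estimate $\vertiii{\mathbf{w}}_2 \lesssim \vertiii{\tilde{\mathbf{f}}}_0$ (as in \cite{grisvard, FiniteelementanalysisLDG}). Convexity is what is doing the work here: it forces all boundary angles to be at most $\pi$ and thereby rules out the reentrant-corner singularities that would otherwise cap the regularity at $\mathbf{H}^{1+s}(\Omega)$ for some $s<1$. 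Adding back the lifting gives $\Psi_\epsilon = \mathbf{w} + \mathbf{G} \in \mathbf{H}^2(\Omega)$, as asserted. I expect the only genuinely delicate ingredient to be this convex-domain regularity theorem; the $\mathbf{L}^2$-membership of the source is routine in two dimensions thanks to the generous $H^1$ Sobolev embeddings, and the lifting is standard.
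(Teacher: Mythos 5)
Your proof is correct and takes essentially the same route as the paper: the paper likewise freezes the nonlinearity as a source term $F_1(\Psi) = -\frac{2}{\epsilon^2}(\abs{\Psi}^2-1)\Psi$, shows $F_1(\Psi)\in\mathbf{L}^2(\Omega)$ via the embedding $H^1(\Omega)\hookrightarrow L^p(\Omega)$ and H\"older's inequality, and then invokes a Grisvard-type $\mathbf{H}^2$ elliptic regularity theorem for the Dirichlet Laplacian on a convex domain. The only cosmetic differences are that you homogenize the boundary data through an explicit $\mathbf{H}^2$ lifting, whereas the paper cites a version of the regularity theorem that accepts $\mathbf{H}^{\frac{3}{2}}(\partial\Omega)$ data directly, and that you correctly omit the bootstrapping step the paper mentions, which is unnecessary here since one pass already yields $\mathbf{H}^2(\Omega)$.
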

	In this article, we approximate regular solutions \cite{keller} $\Psi_\epsilon$ of \eqref{2.3.1.2} for a given $\epsilon$. The regularity of solution implies that the linearized operator $\dual{{\rm DN}(\Psi_\epsilon)\cdot, \cdot}$ is invertible in the Banach space and is equivalent to the following inf-sup condition \cite{Ern} 
	\begin{align}\label{2.9}
	0< \beta := \inf_{\substack{\Theta \in \V \\  \vertiii{\Theta}_1=1}} \sup_{\substack{\Phi \in \V \\  \vertiii{\Phi}_1=1}}\dual{{\rm DN}(\Psi_\epsilon)\Theta, \Phi}=  \inf_{\substack{\Phi \in \V \\  \vertiii{\Phi}_1=1}} \sup_{\substack{\Theta \in \V \\  \vertiii{\Theta}_1=1}}\dual{{\rm DN}(\Psi_\epsilon)\Theta, \Phi},
	\end{align}
	where $\dual{{\rm DN}(\Psi_\epsilon)\Theta, \Phi}:=A(\Theta,\Phi)+3B(\Psi_\epsilon,\Psi_\epsilon,\Theta,\Phi)+C(\Theta,\Phi)$ and  the inf-sup constant $\beta$ depends on $\epsilon$.
From now onwards, the subscript $\epsilon$ in $\Psi_\epsilon$ is suppressed in the sequel for notational brevity. 
	\section{Discrete formulation} \label{discrete}
	In this section, we derive the dGFEM formulation for \eqref{2.3.1.2} and state our main results.
	\subsection{The dGFEM formulation}\label{dg_formulation}
	Let $\mathcal{T}$ be a triangulation \cite{ciarlet} of $\bar{\Omega}$ into triangles and let the discretization parameter $h$ associated with the partition $\mathcal{T}$ be defined as $h= \max_{T \in \mathcal{T}} h_T,$ where $h_T= diam(T) $.  Let $\mathcal{E}_i $( resp. $ \mathcal{E}_D$) denote the interior (resp. boundary) edges of $\mathcal{T}$ and  $\mathcal{E}:=\mathcal{E}_i \cup \mathcal{E}_D$. Also, the boundary of an element $T$ is denoted by $\partial T$ and the unit normal vector outward from $T$ is denoted by $n.$ 
	For any interior edge $E$ shared by two triangles $T^+$ and $T^-$, let the unit normal pointing from  $T^+$ to $T^-$ be $n_+$ [see Figure \ref{unit normal}]. We assume the triangulation $\mathcal{T} $ be shape regular \cite{ciarlet} in the sense that there exists $\rho > 0$ such that if $h_T$ is the diameter of $T$, then $T$ contains a ball of radius $\rho h_T $ in its interior. \medskip
	\begin{figure}[h!] 
		\centering
		\includegraphics[height=4cm, width=6cm]{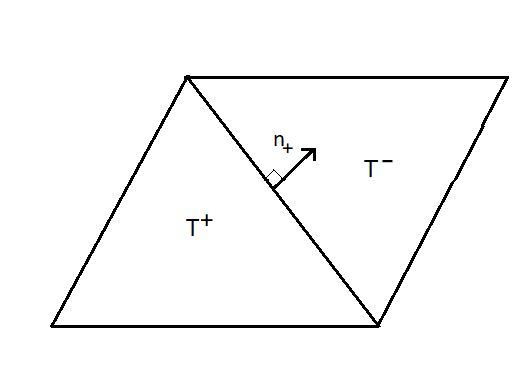}
		\caption{ The neighboring triangles $T^+$ and $T^-$ and unit normal $n_+$.}
		\label{unit normal}
	\end{figure}
	For a positive integer $s$, define the broken Sobolev space     by  $H^s(\mathcal{T}):=\{v \in L^2(\Omega) : v|_T \in H^s(T) \,  \text{ for all } T \in \mathcal{T} \}$ equipped with the broken norm $\norm{v}_{s,\mathcal{T}}^2=\sum_{ T \in \mathcal{T}} \norm{v}_{s,T}^2 .$ 
	Denote $ \h^s(\mathcal{T}) = H^s(\mathcal{T}) \times H^s(\mathcal{T})$ to be the product space with the norm $\vertiii{\Phi}_{s,\mathcal{T}}^2=\norm{\phi_1}_{s,\mathcal{T}}^2+\norm{\phi_2}_{s,\mathcal{T}}^2 \,\, \text{ for all } \Phi=(\phi_1, \phi_2) \in \h^s(\mathcal{T}).$ For $u \in H^s(\mathcal{T}) $, we define the broken gradient $\nabla_\dg u$ by 
	$(\nabla_\dg u)|_T=\nabla_\dg (u|_T) \,\, \text{ for all } T \in \mathcal{T}.$
	Follow the standard convention \cite{Prudhomme} for the jump and average. 
	Consider the finite dimensional space that consists of piecewise linear polynomials defined by $\displaystyle V_h=\{v \in L^2(\Omega): v|_T \in P_1(T) \,\, \text{for all } T \in \mathcal{T}\},$
	and equip it with the mesh dependent norm defined by
	$\displaystyle \norm{v}^2_{\dg}=\abs{v}^2_{\textrm{H}^1(\mathcal{T})}+ J^\sigma(v,v)=\sum_{ T \in \mathcal{T}} \int_T \abs{ \nabla v}^2 \dx + \sum_{E \in \mathcal{E}} \int_{E} \frac{\sigma}{h} [v]^2 \ds,$
	where $\sigma > 0$ is the penalty parameter. Let $\V_h\!: =V_h \times V_h $ be equipped with the product norm defined by $\vertiiidg{\Phi_\dg}^2= \norm{\phi_1}^2_\dg +  \norm{\phi_2}^2_\dg \text{ for all } \Phi_\dg=(\phi_1,\phi_2) \in \V_h.  $
\medskip

	\noindent The discontinuous Galerkin formulation corresponding to \eqref{2.3.1.2} seeks $ \Psi_{\dg}\!\in\!\V_h$ such that for all $ \Phi_{\dg} \in  \V_h,$
	\begin{align}\label{2.3.13}
	{\rm N}_h(\Psi_\dg;\Phi_\dg):=A_{\dg}(\Psi_{\dg},\Phi_{\dg})+B_{\dg}(\Psi_{\dg},\Psi_{\dg},\Psi_{\dg},\Phi_{\dg})+C_{\dg}(\Psi_{\dg},\Phi_{\dg})=L_\dg(\Phi_{\dg}), 
	\end{align}
	where for $ \Xi=(\xi_1,\xi_2),\, \boldsymbol \eta=(\eta_1, \eta_2),\,\Theta=(\theta_1,\theta_2)  ,\, \Phi=(\phi_1,\phi_2)
	\in \h^1(\mathcal{T}) $, $\displaystyle L_\dg(\Phi_{\dg})=l^1_\dg(\phi_1)+l^2_\dg(\phi_2),$
	\begin{align}
	A_{\dg}(\Theta,\Phi)& :=a_{\dg}(\theta_1,\phi_1)+a_{\dg}(\theta_2,\phi_2),  C_{\dg}(\Theta,\Phi):=c_{\dg}(\theta_1,\phi_1)+c_{\dg}(\theta_2,\phi_2) , \notag \\	
	B_\dg(\Xi,\boldsymbol \eta,\Theta,\Phi)& :=\frac{C_\epsilon}{3}\sum_{ T \in \mathcal{T}} \int_T  \left((\Xi \cdot \boldsymbol \eta)(\Theta \cdot \Phi)+2(\Xi \cdot \Theta)(\boldsymbol \eta \cdot \Phi)\right) \dx \notag\\ 
	&= \frac{1}{3}(3b_\dg(\xi_1,\eta_1,\theta_1,\phi_1)+3b_\dg(\xi_2,\eta_2,\theta_2,\phi_2)
	+2b_\dg(\xi_2,\eta_1,\theta_2,\phi_1) + 2 b_\dg(\xi_1,\eta_2,\theta_1,\phi_2) \nonumber \\
	& \qquad \quad 
	+b_\dg(\xi_2,\eta_2,\theta_1,\phi_1)+b_\dg(\xi_1,\eta_1,\theta_2,\phi_2)), \label{3.2}
	\end{align}
	and for 
	$\theta, \phi \in H^1(\mathcal{T})$, $\mathbf{g}=(g_1, g_2)$, $\lambda \in [-1,1],$ 
    $\displaystyle a_{\dg}(\theta,\phi):=a_h(\theta,\phi)-J(\theta, \phi)+\lambda J(\phi,\theta) +J^\sigma(\theta,\phi),$
	\begin{align*}
	&a_h(\theta,\phi):=\sum_{ T \in \mathcal{T}} \int_T \nabla \theta \cdot \nabla \phi \dx, J(\theta,\phi):= \sum_{E \in \mathcal{E}} \int_{E} \{\frac{\partial \theta}{\partial \eta}\} [\phi] \ds, \text{ and } J^\sigma(\theta,\phi):= \sum_{E \in \mathcal{E}} \int_{E} \frac{\sigma}{h} [\theta] [\phi] \ds, \\	
	&b_\dg(\xi, \eta,\theta ,\phi):= C_\epsilon \sum_{ T \in \mathcal{T}}\int_T  \xi \eta \theta \phi \dx ,\,\, c_\dg(\theta,\phi):=-C_\epsilon \sum_{ T \in \mathcal{T}}\int_T \theta \phi \dx,\\&
	\text{  and }\,\,
	l^i_\dg(\phi):=
	\lambda \sum_{E \in \mathcal{E}_D} \int_{E} \frac{\partial \phi}{\partial \eta} g_i \ds+ \sum_{E \in \mathcal{E}_D} \int_{E}\frac{\sigma}{h} g_i \phi \ds \text{ for } 1\leq i \leq 2.
	\end{align*}
	\begin{rem}
		The parameter values $\lambda= -1, 0, 1$ corresponds to symmetric interior penalty, incomplete interior penalty and non-symmetric interior penalty dG methods, respectively in the context of linear problems. 
	\end{rem}
	\subsection{The main results} \label{section 3.1}
	Our main results are given below.
	 The proofs are presented in Sections \ref{main result} and \ref{numerical}.  The details of the suppressed constants in '$\lesssim$' in  the Theorems \ref{2.5.2} - \ref{sipg} will be made clear in the proofs given in Section \ref{main result}.
	\begin{thm}[Existence, uniqueness and dG norm error estimate]\label{2.5.2}
		Let $\Psi$ be a regular solution of the non-linear system
		\eqref{eq:2.1.5}. For a given fixed $\epsilon>0,$ sufficiently large $\sigma$ and sufficiently small discretization parameter chosen as $h=O(\epsilon^{2+\alpha} )$ for any $\alpha > 0$, there exists a unique solution $\Psi_{\rm dG}$ of the discrete non-linear problem \eqref{2.3.13} that approximates $\Psi $ such that 
		\begin{align*}
		\vertiii{\Psi-\Psi_{\rm dG}}_{\rm dG}\lesssim h.
		\end{align*}
	\end{thm}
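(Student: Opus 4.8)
The plan is to recast the discrete problem \eqref{2.3.13} as a fixed-point equation and invoke the Banach fixed-point theorem on a small ball around a suitable interpolant $I_h\Psi \in \V_h$ of the regular solution $\Psi$. The three structural ingredients are: (i) \emph{consistency}: since $\Psi \in \h^2(\Omega)$ by Lemma~\ref{regularity}, element-wise integration by parts together with the boundary and flux terms in $a_\dg$ and $l^i_\dg$ shows that $\Psi$ satisfies $N_h(\Psi;\Phi_\dg)=L_\dg(\Phi_\dg)$ for all $\Phi_\dg \in \V_h$; (ii) a \emph{discrete inf--sup} condition for the linearized dG form $\dual{{\rm DN}_h(\Psi)\Theta,\Phi}:=A_\dg(\Theta,\Phi)+3B_\dg(\Psi,\Psi,\Theta,\Phi)+C_\dg(\Theta,\Phi)$ on $\V_h$, with a constant $\beta_h$ inherited from the continuous $\beta$ in \eqref{2.9} by relating $\V_h$ to the conforming space $\V$ through an enrichment/companion operator (this is the auxiliary result of Section~\ref{auxiliary results}, and is where the choice of sufficiently large $\sigma$ enters to ensure coercivity of $a_\dg$); and (iii) the boundedness estimates of Lemma~\ref{2.6}, transferred to the broken norm $\vertiiidg{\cdot}$ via a discrete Sobolev embedding $\V_h \hookrightarrow \mathbf{L}^4(\Omega)$. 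Granting (ii), the operator associated with $\dual{{\rm DN}_h(\Psi)\cdot,\cdot}$ is invertible on $\V_h$ with inverse of norm $\lesssim \beta_h^{-1}$, so I may define $\mu:\V_h\to\V_h$ by $\mu(\mathbf{w}):=\mathbf{w}-{\rm DN}_h(\Psi)^{-1}\big(N_h(\mathbf{w};\cdot)-L_\dg\big)$, whose fixed points are precisely the solutions of \eqref{2.3.13}.

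Next I would verify that $\mu$ is a contraction mapping a ball $B_r(I_h\Psi):=\{\mathbf{w}\in\V_h:\vertiiidg{\mathbf{w}-I_h\Psi}\le r\}$ into itself. For the self-map estimate, consistency gives $N_h(I_h\Psi;\cdot)-L_\dg=N_h(I_h\Psi;\cdot)-N_h(\Psi;\cdot)$, whence $\vertiiidg{\mu(I_h\Psi)-I_h\Psi}\le \beta_h^{-1}\,\norm{N_h(I_h\Psi;\cdot)-N_h(\Psi;\cdot)}_{\V_h^*}$; bounding this difference by the Lipschitz continuity of $N_h$ (the cubic part controlled via Lemma~\ref{2.6}, contributing an $\epsilon^{-2}$ factor and powers of $\vertiii{\Psi}_2\lesssim 1$) together with the interpolation estimate $\vertiiidg{\Psi-I_h\Psi}\lesssim h\,\vertiii{\Psi}_2\lesssim h$ yields $\vertiiidg{\mu(I_h\Psi)-I_h\Psi}\lesssim \beta_h^{-1}\epsilon^{-2}h$. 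For the contraction, the affine parts $A_\dg$ and $C_\dg$ cancel in $\mu(\mathbf{w})-\mu(\mathbf{z})$, leaving only the ``second-order'' cubic remainder $3B_\dg(\Psi,\Psi,\mathbf{w}-\mathbf{z},\cdot)-\big(B_\dg(\mathbf{w},\mathbf{w},\mathbf{w},\cdot)-B_\dg(\mathbf{z},\mathbf{z},\mathbf{z},\cdot)\big)$, which by multilinearity and Lemma~\ref{2.6} is bounded by $\epsilon^{-2}\big(\vertiiidg{\mathbf{w}-\Psi}+\vertiiidg{\mathbf{z}-\Psi}\big)\vertiiidg{\mathbf{w}-\mathbf{z}}$; hence the contraction constant is of order $\beta_h^{-1}\epsilon^{-2}(r+h)$.

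With $r\simeq \beta_h^{-1}\epsilon^{-2}h$, both requirements reduce to a single smallness condition of the form $\beta_h^{-1}\epsilon^{-2}(r+h)\le \tfrac12$, i.e. to $h$ being small relative to a fixed power of $\epsilon$; tracking the $\epsilon$-dependence of $\beta_h$ and of the $\epsilon^{-2}$ factors above is precisely what pins this down to $h=O(\epsilon^{2+\alpha})$ for any $\alpha>0$. The Banach fixed-point theorem then furnishes a unique $\Psi_\dg\in B_r(I_h\Psi)$ solving \eqref{2.3.13}, giving existence and local uniqueness. Finally, the error estimate follows from the triangle inequality $\vertiiidg{\Psi-\Psi_\dg}\le \vertiiidg{\Psi-I_h\Psi}+\vertiiidg{I_h\Psi-\Psi_\dg}\le \vertiiidg{\Psi-I_h\Psi}+r\lesssim h$, since both contributions are $O(h)$ for the fixed $\epsilon$.

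I expect the main obstacle to be item (ii) combined with the $\epsilon$-bookkeeping: transferring the continuous inf--sup \eqref{2.9} to a discrete inf--sup on $\V_h$ in the broken norm \emph{with explicit control of the $\epsilon$-dependence of $\beta_h$}, and then balancing this against the $\epsilon^{-2}$ growth from the cubic nonlinearity so that the contraction condition holds exactly when $h=O(\epsilon^{2+\alpha})$. Establishing the requisite discrete Sobolev embedding and dG-norm interpolation bounds, and controlling the dG consistency error in the broken dual norm, are the technical crux and constitute the content of the auxiliary results in Section~\ref{auxiliary results}.
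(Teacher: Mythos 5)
Your overall strategy coincides with the paper's: the paper's map \eqref{2.5.1} is exactly your Newton-type map (with the linearization taken at ${\rm I}_{\rm dG}\Psi$ rather than at $\Psi$, cf.\ Lemma \ref{2.4.12}), your self-map and contraction steps are Theorem \ref{thm2.5.1} and Lemma \ref{4.3}, and existence, uniqueness and the error bound follow from a fixed-point argument plus the triangle inequality, just as you outline (the paper invokes Brouwer plus the contraction property where you invoke Banach, which is immaterial). There is, however, a genuine quantitative gap in your self-map estimate, and it breaks precisely the part of the statement that quantifies the $h$-$\epsilon$ dependence.

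You bound the centre displacement by the global Lipschitz constant of ${\rm N}_h$, i.e.\ $\vertiiidg{\mu({\rm I}_{\rm dG}\Psi)-{\rm I}_{\rm dG}\Psi}\lesssim \beta_h^{-1}(1+\epsilon^{-2})\vertiiidg{\Psi-{\rm I}_{\rm dG}\Psi}\lesssim \beta_h^{-1}\epsilon^{-2}h$, and accordingly take the radius $r\simeq \beta_h^{-1}\epsilon^{-2}h$. Feeding this into your own contraction condition $\beta_h^{-1}\epsilon^{-2}(r+h)\le\tfrac12$ forces $h\epsilon^{-4}$ to be small, i.e.\ $h=O(\epsilon^{4+\alpha})$, not the claimed $h=O(\epsilon^{2+\alpha})$; moreover the triangle inequality then yields only $\vertiiidg{\Psi-\Psi_{\rm dG}}\lesssim \epsilon^{-2}h$, whereas in the paper's convention $\lesssim$ hides constants independent of both $h$ and $\epsilon$, so this is not the asserted estimate $\lesssim h$. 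The missing idea is to estimate the consistency residual ${\rm N}_h({\rm I}_{\rm dG}\Psi;\cdot)-{\rm N}_h(\Psi;\cdot)$ term by term rather than by Lipschitz continuity: the second-order part $A_{\rm dG}({\rm I}_{\rm dG}\Psi-\Psi,\cdot)$ is $O(h)$ with no $\epsilon$-factor, while the zeroth-order parts ($C_{\rm dG}$ and the $B_{\rm dG}$-differences, in the pieces that are linear in the interpolation error) must be bounded through the $\mathbf{L}^2$-interpolation rate $\vertiii{\Psi-{\rm I}_{\rm dG}\Psi}_0\lesssim h^2$ together with the $\h^2$-weighted bound \eqref{eq:2.3.33} of Lemma \ref{2.3.31}, giving $O(\epsilon^{-2}h^2)$; this is the $T_1$--$T_3$ decomposition in the paper's Theorem \ref{thm2.5.1}. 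With that refinement the displacement is $O(h+\epsilon^{-2}h^2)=O(h)$ once $h\epsilon^{-2}\le h^{\frac{\alpha}{2+\alpha}}$, the radius can be taken $\epsilon$-free as $R(h)\simeq h$, the contraction constant becomes $\epsilon^{-2}h(1+h)\lesssim h^{\frac{\alpha}{2+\alpha}}$, and both the mesh condition $h=O(\epsilon^{2+\alpha})$ and the $\epsilon$-uniform bound $\vertiiidg{\Psi-\Psi_{\rm dG}}\lesssim h$ come out as stated.
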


\begin{thm}[Best approximation result]\label{thmbestapproximation}
		Let $\Psi$ be a regular solution of the non-linear system
	\eqref{eq:2.1.5}.
	 For a given fixed $\epsilon>0,$ sufficiently large $\sigma$ and sufficiently small discretization parameter chosen as $h=O(\epsilon^{2+\alpha} )$ with $\alpha > 0$,  
	 the unique discrete solution $\Psi_{\rm{dG} } $ of \eqref{2.3.13} that approximates $\Psi $ satisfies the best-approximation property 
	\begin{align*}
	\vertiii{\Psi- \Psi_{\rm dG }}_{\rm dG }\lesssim (1+\epsilon^{-2}) \min_{\Theta_{\rm dG} \in \V_h}\vertiii{\Psi- \Theta_{\rm dG}}_{\rm dG}.
	\end{align*}
\end{thm}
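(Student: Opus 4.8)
The plan is to prove quasi-optimality by a perturbation-of-the-linearization argument, using the discrete inf-sup stability of the linearized form $\dual{{\rm DN}_h(\Psi)\cdot,\cdot}$ established in Section~\ref{auxiliary results} together with the \textit{a priori} bound $\vertiiidg{\Psi-\Psi_{\dg}}\lesssim h$ from Theorem~\ref{2.5.2}. Fix an arbitrary $\Theta_{\dg}\in\V_h$ and set $\eta:=\Psi-\Theta_{\dg}$, $e:=\Psi-\Psi_{\dg}$ and $\chi_{\dg}:=\Theta_{\dg}-\Psi_{\dg}=e-\eta\in\V_h$. By the triangle inequality $\vertiiidg{e}\le\vertiiidg{\eta}+\vertiiidg{\chi_{\dg}}$, so it suffices to control the purely discrete quantity $\vertiiidg{\chi_{\dg}}$ by $(1+\epsilon^{-2})\vertiiidg{\eta}$; taking the infimum over $\Theta_{\dg}$ then yields the claim.

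First I would invoke the discrete inf-sup condition: there is $\beta_{\dg}>0$ (the discrete counterpart of $\beta$ in \eqref{2.9}, bounded below uniformly in $h$) with
\begin{align*}
\beta_{\dg}\vertiiidg{\chi_{\dg}}\le\sup_{\Phi_{\dg}\in\V_h\setminus\{0\}}\frac{\dual{{\rm DN}_h(\Psi)\chi_{\dg},\Phi_{\dg}}}{\vertiiidg{\Phi_{\dg}}},
\end{align*}
and split the numerator along $\chi_{\dg}=e-\eta$ as $\dual{{\rm DN}_h(\Psi)e,\Phi_{\dg}}-\dual{{\rm DN}_h(\Psi)\eta,\Phi_{\dg}}$. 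The second term is handled directly by boundedness of the linearization: since $A_{\dg}$ contributes a constant of order $1$, while $3B_{\dg}(\Psi,\Psi,\cdot,\cdot)$ and $C_{\dg}$ each contribute a constant of order $\epsilon^{-2}$ (the fixed factor $\vertiii{\Psi}_2^2$ being absorbed via Lemma~\ref{2.6}), one gets $\abs{\dual{{\rm DN}_h(\Psi)\eta,\Phi_{\dg}}}\lesssim(1+\epsilon^{-2})\vertiiidg{\eta}\vertiiidg{\Phi_{\dg}}$. This is precisely the source of the explicit $(1+\epsilon^{-2})$ factor in the statement.

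The key step is the first term, where I would use \emph{consistency} of the dG scheme: because $\Psi\in\h^2(\Omega)$ (Lemma~\ref{regularity}), it satisfies ${\rm N}_h(\Psi;\Phi_{\dg})=L_{\dg}(\Phi_{\dg})={\rm N}_h(\Psi_{\dg};\Phi_{\dg})$ for all $\Phi_{\dg}\in\V_h$. Since the nonlinearity is cubic, the Taylor expansion of ${\rm N}_h(\cdot\,;\Phi_{\dg})$ about $\Psi$ terminates exactly, giving
\begin{align*}
0={\rm N}_h(\Psi_{\dg};\Phi_{\dg})-{\rm N}_h(\Psi;\Phi_{\dg})=-\dual{{\rm DN}_h(\Psi)e,\Phi_{\dg}}+R(\Phi_{\dg}),
\end{align*}
where $R$ collects the terms quadratic and cubic in $e$, all built from $B_{\dg}$. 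Thus $\dual{{\rm DN}_h(\Psi)e,\Phi_{\dg}}=R(\Phi_{\dg})$, and the two discrete-norm boundedness estimates for $B_{\dg}$ (the dG analogues of Lemma~\ref{2.6}) yield $\abs{R(\Phi_{\dg})}\lesssim\epsilon^{-2}(\vertiiidg{e}^2+\vertiiidg{e}^3)\vertiiidg{\Phi_{\dg}}\lesssim\epsilon^{-2}\vertiiidg{e}^2\vertiiidg{\Phi_{\dg}}$ for $\vertiiidg{e}$ bounded.

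Finally I would close the estimate by a \textit{kick-back} argument. Using $\vertiiidg{e}\lesssim h$ from Theorem~\ref{2.5.2} and $\vertiiidg{e}\le\vertiiidg{\eta}+\vertiiidg{\chi_{\dg}}$, the remainder is bounded by $\epsilon^{-2}h(\vertiiidg{\eta}+\vertiiidg{\chi_{\dg}})\vertiiidg{\Phi_{\dg}}$, so collecting terms gives
\begin{align*}
\beta_{\dg}\vertiiidg{\chi_{\dg}}\lesssim(1+\epsilon^{-2})\vertiiidg{\eta}+\epsilon^{-2}h\big(\vertiiidg{\eta}+\vertiiidg{\chi_{\dg}}\big).
\end{align*}
Because $h=O(\epsilon^{2+\alpha})$ forces $\epsilon^{-2}h=O(\epsilon^{\alpha})$, the factor in front of $\vertiiidg{\chi_{\dg}}$ is smaller than $\tfrac12\beta_{\dg}$ for $h$ small enough, so that term is absorbed into the left-hand side, leaving $\vertiiidg{\chi_{\dg}}\lesssim(1+\epsilon^{-2})\vertiiidg{\eta}$. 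I expect the \textbf{main obstacle} to be this bookkeeping: one must verify that the discrete inf-sup constant $\beta_{\dg}$ is bounded below independently of $h$ (so that it may be hidden in $\lesssim$) and that the cubic remainder is genuinely higher order in $h$, which is exactly why the scaling $h=O(\epsilon^{2+\alpha})$ is imposed; without it the absorption step fails.
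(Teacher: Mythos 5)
Your proposal is correct and follows essentially the same route as the paper's proof: discrete inf-sup stability of $\dual{{\rm DN}_h(\Psi)\cdot,\cdot}$, consistency of the dG scheme for $\Psi\in\h^2(\Omega)$, the exactly terminating Taylor expansion of the cubic nonlinearity, the operator bounds yielding the $(1+\epsilon^{-2})$ factor, and absorption of the quadratic/cubic remainder using $\vertiii{\Psi-\Psi_{\rm dG}}_{\rm dG}\lesssim h$ from Theorem \ref{2.5.2} together with $h\epsilon^{-2}\le h^{\frac{\alpha}{2+\alpha}}$. The only (cosmetic) difference is that you argue with an arbitrary $\Theta_{\rm dG}\in\V_h$ and absorb into $\vertiii{\Theta_{\rm dG}-\Psi_{\rm dG}}_{\rm dG}$ before taking the infimum, whereas the paper fixes the minimizer $\Psi^*_{\rm dG}$ at the outset and absorbs into the full error $\vertiii{\Psi-\Psi_{\rm dG}}_{\rm dG}$.
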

\begin{rem}
Theorem \ref{thmbestapproximation} shows that discrete solution $\Psi_{\rm dG }$ is "the best" approximation of $\Psi$ in $\V_{h}$, up to a constant. Best approximation result is mainly motivated by "the best" approximation of discrete solution of linear PDEs in Céa's lemma \cite{ciarlet} for conforming FEM.  
\end{rem}
\noindent We establish the $\mathbf{L}^2  $ norm error estimate when the parameter $\lambda$ that appears in the discrete non-linear system  \eqref{2.3.13} (in the term $A_{\rm dG}(\cdot, \cdot)$ through $a_{\dg}(\cdot, \cdot)$) takes the value $-1$.
\begin{thm}[$\mathbf{L}^2  $ norm error estimate] \label{sipg}
	Let $\Psi$ be a regular solution of the non-linear system
	\eqref{eq:2.1.5}. For a given fixed $\epsilon>0,$ sufficiently large $\sigma$ and sufficiently small discretization parameter chosen as $h=O(\epsilon^{2+\alpha} )$ for  $\alpha > 0$, there exists a unique solution $\Psi_{\rm dG}$ of the discrete non-linear problem \eqref{2.3.13} that approximates $\Psi $  such that 
	\begin{align*}
	\vertiii{\Psi-\Psi_{\rm dG}}_{0}\lesssim h^2(1+ (1+\epsilon^{-2} )^2).
	\end{align*}
\end{thm}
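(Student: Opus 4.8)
The plan is to run an Aubin--Nitsche duality argument, exploiting that the restriction $\lambda=-1$ makes $a_{\dg}(\cdot,\cdot)$ symmetric and, more importantly, adjoint consistent. Write $e:=\Psi-\Psi_\dg$ and recall from \eqref{2.9} that the linearized form $\dual{{\rm DN}(\Psi)\cdot,\cdot}$ is symmetric (because $A$ and $C$ are symmetric and $B(\Psi,\Psi,\cdot,\cdot)$ is invariant under swapping its last two arguments) and satisfies an inf-sup condition; hence the dual problem of seeking $\chi\in\V$ with $\dual{{\rm DN}(\Psi)\Phi,\chi}=(\Phi,e)$ for all $\Phi\in\V$ is well posed. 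By Lemma \ref{regularity} applied to the linearized operator, $\chi\in\h^2(\Omega)$ with an $\epsilon$-explicit elliptic-regularity bound of the form $\vertiii{\chi}_2\lesssim(1+\epsilon^{-2})\vertiii{e}_0$; establishing this bound with the correct power of $\epsilon^{-1}$ is one of the two delicate points.

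First I would record the nonlinear Galerkin orthogonality. Since $\Psi$ is a regular solution and, by consistency of the scheme (as already used for Theorem \ref{2.5.2}), ${\rm N}_h(\Psi;\Phi_\dg)=L_\dg(\Phi_\dg)=\,{\rm N}_h(\Psi_\dg;\Phi_\dg)$ for all $\Phi_\dg\in\V_h$, subtracting and using the algebraic identity
\[
B_\dg(\Psi,\Psi,\Psi,\Phi_\dg)-B_\dg(\Psi_\dg,\Psi_\dg,\Psi_\dg,\Phi_\dg)=3B_\dg(\Psi,\Psi,e,\Phi_\dg)-3B_\dg(e,e,\Psi,\Phi_\dg)+B_\dg(e,e,e,\Phi_\dg),
\]
which follows from the multilinearity of $B_\dg$ together with $B_\dg(\Xi,\Xi,\Xi,\Phi)=C_\epsilon\sum_{T\in\mathcal{T}}\int_T\abs{\Xi}^2(\Xi\cdot\Phi)\dx$, yields the error equation $A_\dg(e,\Phi_\dg)+3B_\dg(\Psi,\Psi,e,\Phi_\dg)+C_\dg(e,\Phi_\dg)=3B_\dg(e,e,\Psi,\Phi_\dg)-B_\dg(e,e,e,\Phi_\dg)$ for all $\Phi_\dg\in\V_h$. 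The left-hand side is precisely the discrete linearized form $\dual{{\rm DN}_h(\Psi)e,\Phi_\dg}_\dg$, while the right-hand side is quadratic and cubic in $e$, hence of higher order in $h$.

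The core is the duality identity. Because $\chi\in\h^2(\Omega)$ and $\lambda=-1$, the symmetric interior penalty form is adjoint consistent: testing the discrete linearized form against the smooth $\chi$ reproduces the $\mathbf{L}^2$ pairing, i.e.\ $\dual{{\rm DN}_h(\Psi)w,\chi}_\dg=(w,e)$ for every $w$ in the broken space (the contributions involving $[\chi]$ and the integration-by-parts boundary terms cancel exactly when $\lambda=-1$). Taking $w=e$ gives $\vertiii{e}_0^2=\dual{{\rm DN}_h(\Psi)e,\chi}_\dg$. I then insert a quasi-interpolant $\chi_h\in\V_h$ and split into $\dual{{\rm DN}_h(\Psi)e,\chi-\chi_h}_\dg+\dual{{\rm DN}_h(\Psi)e,\chi_h}_\dg$. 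For the first term I use the dG boundedness of $A_\dg,B_\dg,C_\dg$ (discrete analogues of \eqref{2.3}, \eqref{eq:2.1.5.4}, Lemma \ref{2.6}) to extract a factor $(1+\epsilon^{-2})\vertiiidg{e}\,\vertiiidg{\chi-\chi_h}$, then the interpolation estimate $\vertiiidg{\chi-\chi_h}\lesssim h\vertiii{\chi}_2$, Theorem \ref{2.5.2} ($\vertiiidg{e}\lesssim h$), and the dual regularity bound. For the second term I substitute the error equation with $\Phi_\dg=\chi_h$, reducing it to $3B_\dg(e,e,\Psi,\chi_h)-B_\dg(e,e,e,\chi_h)$, which the boundedness of $B_\dg$ controls by $\epsilon^{-2}\vertiiidg{e}^2(\vertiiidg{\Psi}+\vertiiidg{e})\vertiiidg{\chi_h}\lesssim\epsilon^{-2}h^2\vertiii{\chi}_2$. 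Combining and dividing by $\vertiii{e}_0$, both contributions are $\lesssim h^2(1+\epsilon^{-2})\vertiii{\chi}_2/\vertiii{e}_0\lesssim h^2(1+\epsilon^{-2})^2$, the additive $1$ absorbing the lower-order terms.

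The hard part will be the $\epsilon$-bookkeeping: the factor $(1+\epsilon^{-2})^2$ is the product of the $(1+\epsilon^{-2})$ from the boundedness of the linearized form and the $(1+\epsilon^{-2})$ hidden in $\vertiii{\chi}_2\lesssim(1+\epsilon^{-2})\vertiii{e}_0$, and I must check that no further inverse powers of the inf-sup constant $\beta$ of \eqref{2.9} enter beyond what these factors already capture. The second delicate point is the adjoint-consistency identity, which genuinely requires $\lambda=-1$; for $\lambda\neq-1$ an uncancelled consistency term of order $h\vertiii{\chi}_2$ would survive and degrade the rate to $O(h)$, which is exactly why the $\mathbf{L}^2$ estimate is stated only for the symmetric interior penalty scheme.
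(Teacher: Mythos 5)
Your proposal is correct, and at its core it is the same Aubin--Nitsche duality argument as the paper's proof: the linearized dual problem \eqref{4.9.1.1}--\eqref{dualweak}, the dual regularity bound $\vertiii{\boldsymbol{\chi}}_2\lesssim(1+\epsilon^{-2})\vertiii{G}_0$ (the paper's Lemma \ref{xiregularity}, proved exactly by the mechanism you anticipate, i.e.\ the inf-sup condition \eqref{2.9} plus elliptic regularity and bootstrapping), adjoint consistency of the $\lambda=-1$ form, interpolation of $\boldsymbol{\chi}$, and the energy estimate $\vertiii{\Psi-\Psi_{\rm dG}}_{\rm dG}\lesssim h$ of Theorem \ref{2.5.2}. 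The execution, however, differs in two genuine ways. First, you take the dual data to be the full error $e=\Psi-\Psi_{\rm dG}$ and test the adjoint-consistency identity with $w=e$, so that $\vertiii{e}_0^2=\dual{{\rm DN}_h(\Psi)e,\boldsymbol{\chi}}$ and the $\mathbf{L}^2$ bound comes out directly; the paper instead sets $G={\rm I}_{\rm dG}\Psi-\Psi_{\rm dG}$, bounds $\vertiii{{\rm I}_{\rm dG}\Psi-\Psi_{\rm dG}}_0$, and finishes with a triangle inequality against the interpolation error of $\Psi$. Second, you first record the nonlinear Galerkin orthogonality (your cubic expansion identity is algebraically correct) and then split $\boldsymbol{\chi}=(\boldsymbol{\chi}-\boldsymbol{\chi}_h)+\boldsymbol{\chi}_h$, using the error equation to reduce the $\boldsymbol{\chi}_h$-contribution to terms quadratic and cubic in $e$; the paper instead expands into the four groups $T_5$--$T_8$, which involve interpolants of both $\Psi$ and $\boldsymbol{\chi}$, and therefore needs the separate adjoint-consistency estimate $A_{\rm dG}({\rm I}_{\rm dG}\Psi-\Psi,\boldsymbol{\chi})\lesssim h^2\vertiii{\Psi}_2\vertiii{\boldsymbol{\chi}}_2$ of Lemma \ref{5.6}. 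Your route dispenses with Lemma \ref{5.6} entirely, since the interpolant of $\Psi$ never enters and the quadratic rate in the $A_{\rm dG}$-part arises from the product $\vertiii{e}_{\rm dG}\,\vertiii{\boldsymbol{\chi}-\boldsymbol{\chi}_h}_{\rm dG}\lesssim h\cdot h$; the price is that you must invoke boundedness of $A_{\rm dG}$ with a non-discrete first argument (traces of $\nabla e$ on edges), which strictly requires a trace-augmented norm, though the paper is equally informal on this point in its estimate of $T_5$. Your $\epsilon$-bookkeeping reproduces the paper's constant $(1+\epsilon^{-2})^2$, and your remark that $\lambda\neq-1$ would leave an uncancelled consistency term of size $O(h\vertiii{\boldsymbol{\chi}}_2)$ is precisely why the paper restricts this theorem to the symmetric interior penalty choice. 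One caution when comparing with the source: the paper's display \eqref{dualweak} (and the line following \eqref{4.12}) omits the factor $3$ in front of $B(\Psi,\Psi,\cdot,\boldsymbol{\chi})$; this is a typo, and your version carries the correct linearization consistent with the definition of $\dual{{\rm DN}_h(\Psi)\cdot,\cdot}$ used in Theorem \ref{2.4.1.1} and in \eqref{4.10}.
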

\noindent We use Newton's method \cite{keller} for computation of discrete solutions. It is a standard and very effective root-finding method to approximate the roots of non-linear system of PDEs.
	\begin{thm}[Convergence of Newton's method] \label{2.1.9.1}
		Let $\Psi$ be a regular solution of the non-linear system
		\eqref{eq:2.1.5} and let $\Psi_{\rm{dG} }$ solve \eqref{2.3.13}. 
		For a given fixed $\epsilon>0,$ sufficiently large $\sigma$ and sufficiently small discretization parameter chosen as $h=O(\epsilon^{2+\alpha} )$ with $\alpha>0$,
	 there exists $\rho_1 > 0$, independent of $h$, such that for any initial guess $\Psi^0_{\rm dG}$ with $\vertiii{\Psi^0_{\rm dG}-\Psi_{\rm dG}}_{\rm dG} \leq \rho_1$, it follows 
	  $\vertiii{\Psi^n_{\rm dG}-\Psi_{\rm dG}}_{\rm dG} \leq \frac{\rho_1}{2^n}$ for all $n=1,2, \dots$ and  
  the iterates $\Psi^n_{\rm dG}$ of Newton's method are well-defined and converges quadratically to $\Psi_{\rm dG};$ that is, $
  \vertiii{\Psi_{\rm dG}^n-\Psi_{\rm dG}}_{\rm dG}
  \leq C_q \vertiii{\Psi_{\rm dG}^{n-1}-\Psi_{\rm dG}}_{\rm dG}^2 
  ,$ where $C_q$ is a constant independent of $h$.
	\end{thm}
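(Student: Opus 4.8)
The plan is to perform a local convergence analysis of Newton's method, exploiting the decisive structural fact that, for fixed $\epsilon$, the discrete map ${\rm N}_h(\cdot\,;\Phi_{\rm dG})$ is a \emph{cubic} polynomial in its first argument, so its Taylor expansion about any point terminates exactly at third order with no remainder. Writing $e^n:=\Psi^n_{\rm dG}-\Psi_{\rm dG}$ for the iteration error and denoting the discrete linearisation by $\dual{{\rm DN}_h(\Theta_{\rm dG})\Xi_{\rm dG},\Phi_{\rm dG}}:=A_{\rm dG}(\Xi_{\rm dG},\Phi_{\rm dG})+3B_{\rm dG}(\Theta_{\rm dG},\Theta_{\rm dG},\Xi_{\rm dG},\Phi_{\rm dG})+C_{\rm dG}(\Xi_{\rm dG},\Phi_{\rm dG})$, Newton's iterate $\Psi^n_{\rm dG}\in\V_h$ is defined by $\dual{{\rm DN}_h(\Psi^{n-1}_{\rm dG})(\Psi^n_{\rm dG}-\Psi^{n-1}_{\rm dG}),\Phi_{\rm dG}}=-{\rm N}_h(\Psi^{n-1}_{\rm dG};\Phi_{\rm dG})+L_{\rm dG}(\Phi_{\rm dG})$ for all $\Phi_{\rm dG}\in\V_h$.

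First I would derive the error recursion. Using that $\Psi_{\rm dG}$ solves \eqref{2.3.13} (so $L_{\rm dG}(\Phi_{\rm dG})={\rm N}_h(\Psi_{\rm dG};\Phi_{\rm dG})$), inserting $\dual{{\rm DN}_h(\Psi^{n-1}_{\rm dG})e^{n-1},\Phi_{\rm dG}}$, and invoking the exact cubic Taylor identity
\begin{align*}
{\rm N}_h(\Psi_{\rm dG};\Phi_{\rm dG})-{\rm N}_h(\Psi^{n-1}_{\rm dG};\Phi_{\rm dG})=-\dual{{\rm DN}_h(\Psi^{n-1}_{\rm dG})e^{n-1},\Phi_{\rm dG}}+3B_{\rm dG}(\Psi^{n-1}_{\rm dG},e^{n-1},e^{n-1},\Phi_{\rm dG})-B_{\rm dG}(e^{n-1},e^{n-1},e^{n-1},\Phi_{\rm dG}),
\end{align*}
the first-order terms cancel and I obtain
\begin{align*}
\dual{{\rm DN}_h(\Psi^{n-1}_{\rm dG})e^{n},\Phi_{\rm dG}}=3B_{\rm dG}(\Psi^{n-1}_{\rm dG},e^{n-1},e^{n-1},\Phi_{\rm dG})-B_{\rm dG}(e^{n-1},e^{n-1},e^{n-1},\Phi_{\rm dG}),
\end{align*}
whose right-hand side is purely quadratic-plus-cubic in the previous error. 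The dG-norm boundedness of $B_{\rm dG}$ (the broken-space analogue of Lemma \ref{2.6}) then bounds the right-hand side by $\lesssim\epsilon^{-2}\big(\vertiiidg{\Psi^{n-1}_{\rm dG}}+\vertiiidg{e^{n-1}}\big)\vertiiidg{e^{n-1}}^2\vertiiidg{\Phi_{\rm dG}}$.

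The second ingredient is an $h$-independent lower bound for the linearisation at the iterate. Starting from the discrete inf-sup condition for $\dual{{\rm DN}_h(\Psi_{\rm dG})\cdot,\cdot}$ underpinning the well-posedness of \eqref{2.3.13} in Theorem \ref{2.5.2} (itself a perturbation of the continuous inf-sup \eqref{2.9} by an $O(h)$ term, legitimate under $h=O(\epsilon^{2+\alpha})$), I would show by a bilinear telescoping estimate that $|\dual{({\rm DN}_h(\Psi^{n-1}_{\rm dG})-{\rm DN}_h(\Psi_{\rm dG}))\Theta_{\rm dG},\Phi_{\rm dG}}|\lesssim\epsilon^{-2}\big(\vertiiidg{\Psi^{n-1}_{\rm dG}}+\vertiiidg{\Psi_{\rm dG}}\big)\vertiiidg{e^{n-1}}\vertiiidg{\Theta_{\rm dG}}\vertiiidg{\Phi_{\rm dG}}$. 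Hence, provided $\vertiiidg{e^{n-1}}$ is below a threshold of order $\epsilon^{2}\beta_h$, the perturbed form $\dual{{\rm DN}_h(\Psi^{n-1}_{\rm dG})\cdot,\cdot}$ retains a discrete inf-sup constant $\geq\beta_h/2$; in particular it is invertible on $\V_h$, so the Newton update is well-defined, and it supplies the divisor that converts the error recursion into the contraction estimate $\vertiiidg{e^n}\leq C_q\vertiiidg{e^{n-1}}^2$ with $C_q\lesssim\beta_h^{-1}\epsilon^{-2}(\vertiiidg{\Psi_{\rm dG}}+1)$ independent of $h$.

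Finally I would close the argument by induction. Choosing $\rho_1>0$ small enough (depending on the fixed $\epsilon$, on $\beta_h$ and on $\vertiiidg{\Psi_{\rm dG}}$, but not on $h$) that it respects the perturbation threshold above and satisfies $C_q\rho_1\leq\tfrac12$, the base case $\vertiiidg{e^0}\leq\rho_1$ together with the contraction estimate gives $\vertiiidg{e^n}\leq C_q\vertiiidg{e^{n-1}}^2\leq C_q\rho_1\vertiiidg{e^{n-1}}\leq\tfrac12\vertiiidg{e^{n-1}}$, whence $\vertiiidg{e^n}\leq\rho_1/2^n$ for all $n$; this simultaneously keeps every iterate inside the ball where the inf-sup persists and delivers the claimed quadratic rate. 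The main obstacle is the uniform (in $h$) control of the linearised inf-sup constant $\beta_h$ together with the honest bookkeeping of the $\epsilon^{-2}$ factors produced by $B_{\rm dG}$: everything must collapse into constants $\rho_1$ and $C_q$ that depend on the fixed $\epsilon$ yet are genuinely independent of $h$, which is precisely where the scaling $h=O(\epsilon^{2+\alpha})$ is needed to prevent the discrete inf-sup constant from degenerating relative to its continuous counterpart $\beta$.
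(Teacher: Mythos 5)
Your proposal is correct and follows essentially the same route as the paper: the exact third-order expansion of the cubic nonlinearity turns the Newton error equation into a quadratic-plus-cubic recursion in $e^{n-1}$ (precisely the right-hand side of the paper's \eqref{6.6}, estimated there like the term $T_4$ of Theorem \ref{thm2.5.1}), the inf-sup condition for the linearization at the current iterate is preserved by a bilinear perturbation argument (the paper perturbs around ${\rm I}_{\rm dG}\Psi$ following Lemma \ref{2.4.12}, you perturb around $\Psi_{\rm dG}$ — an immaterial difference, since $\vertiii{\Psi_{\rm dG}-{\rm I}_{\rm dG}\Psi}_{\rm dG}\lesssim h$ bridges the two centers), and induction with $\rho_1$ chosen so that the contraction factor is at most $\tfrac12$ delivers both the $\rho_1/2^n$ decay and the quadratic rate, exactly as in the paper. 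The only blemish is notational: with the paper's non-symmetric quadrilinear form, the second-order Taylor term is $3B_{\rm dG}(e^{n-1},e^{n-1},\Psi^{n-1}_{\rm dG},\Phi_{\rm dG})$ rather than $3B_{\rm dG}(\Psi^{n-1}_{\rm dG},e^{n-1},e^{n-1},\Phi_{\rm dG})$, but both expressions satisfy the identical $\epsilon^{-2}$-weighted bound, so the argument is unaffected.
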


	\section{Auxiliary results}\label{auxiliary results}
	This section presents some auxiliary results needed to establish the main results in Subsection \ref{section 3.1}. 
	 The boundedness and ellipticity results for $A_\dg(\cdot, \cdot)$, and the boundedness result for $B_\dg(\cdot,\cdot,\cdot, \cdot)$,  $C_\dg(\cdot, \cdot)$ are proved and 
	 the inf-sup conditions for a discrete linearized bilinear form and a perturbed bilinear form are established.
	\begin{lem}\emph{(Poincar\'e type inequality)}\cite{Poincare_Suli, Poincare_Brenner} \label{lem2.3.3}
		For $\phi \in H^1(\mathcal{T})$, there exists a constant $C_P>0$ independent of $h $ and $\phi$ such that for $1 \leq r < \infty$,	$\norm{\phi}_{L^r(\Omega)} \leq C_P \norm{\phi}_{\rm dG}.$
	\end{lem}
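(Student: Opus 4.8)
The plan is to derive the inequality from a broken Poincaré–Friedrichs estimate combined with the two-dimensional Sobolev embedding $H_0^1(\Omega)\hookrightarrow L^r(\Omega)$, which holds for every finite $r$ and degenerates precisely at $r=\infty$ (this explains the restriction $r<\infty$). The main device is a \emph{conforming companion} operator $\phi\mapsto\phi^c\in H_0^1(\Omega)$ that replaces the discontinuous $\phi\in H^1(\mathcal{T})$ by a genuinely $H^1$-conforming function vanishing on $\partial\Omega$, whose deviation from $\phi$ is controlled by the jumps of $\phi$ alone. Writing $\phi=\phi^c+(\phi-\phi^c)$ reduces the bound on $\norm{\phi}_{L^r(\Omega)}$ to a continuous Sobolev estimate for $\phi^c$ and a jump-based control of the remainder $\phi-\phi^c$. (The subrange $1\le r\le 2$ will be immediate from the case $r=2$ by Hölder's inequality on the bounded domain $\Omega$, so the substantive case is $r>2$.)

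First I would build the companion by a local averaging of degrees of freedom (an Oswald-type operator), composed where necessary with a Scott–Zhang quasi-interpolation so that the construction is valid for a general, not necessarily piecewise polynomial, $\phi\in H^1(\mathcal{T})$. The key estimate to establish, via edgewise trace inequalities and shape-regularity, is
\begin{align*}
\sum_{T\in\mathcal{T}}h_T^{-2}\norm{\phi-\phi^c}_{0,T}^2+\abs{\phi-\phi^c}_{1,\mathcal{T}}^2\lesssim \sum_{E\in\mathcal{E}} h_E^{-1}\norm{[\phi]}_{0,E}^2\lesssim J^\sigma(\phi,\phi)\le\norm{\phi}_{\rm dG}^2 ,
\end{align*}
the middle inequality using shape-regularity and sufficiently large $\sigma$ to compare $h_E^{-1}$ with the penalty weight $\sigma/h$. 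The decisive point is that the sum runs over \emph{all} edges $\mathcal{E}=\mathcal{E}_i\cup\mathcal{E}_D$: interior jumps measure the loss of continuity, while the boundary jumps (the traces of $\phi$ on $\partial\Omega$) encode the failure of the homogeneous boundary condition, so that $\phi^c$ may genuinely be taken in $H_0^1(\Omega)$ and the estimate is of Friedrichs rather than merely Poincaré type. A triangle inequality then yields $\abs{\phi^c}_{1,\Omega}\le\abs{\phi}_{1,\mathcal{T}}+\abs{\phi-\phi^c}_{1,\mathcal{T}}\lesssim\norm{\phi}_{\rm dG}$.

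To conclude, Poincaré and the continuous embedding give $\norm{\phi^c}_{L^r(\Omega)}\lesssim\abs{\phi^c}_{1,\Omega}\lesssim\norm{\phi}_{\rm dG}$, with embedding constant depending on $r$ and $\Omega$ but not on $h$. For the remainder $w:=\phi-\phi^c$ I would sum the scaled elementwise Sobolev inequality $\norm{w}_{L^r(T)}\lesssim h_T^{2/r-1}\norm{w}_{0,T}+h_T^{2/r}\abs{w}_{1,T}$ (obtained by scaling to the reference triangle) over $T\in\mathcal{T}$; invoking the embedding $\ell^2\hookrightarrow\ell^r$ for $r\ge2$ together with the non-conformity estimate, the powers of $h$ combine so that $\norm{w}_{L^r(\Omega)}\lesssim\norm{\phi}_{\rm dG}$ uniformly in $h$ (indeed of higher order). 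The choice $r=2$ already delivers the discrete Friedrichs inequality $\norm{\phi}_{0}\lesssim\norm{\phi}_{\rm dG}$, and adding the two contributions gives the bound for every finite $r$.

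I expect the genuine obstacle to be the non-conformity estimate itself: proving that the entire broken energy of $\phi-\phi^c$ is dominated by the jumps alone requires a careful averaging construction valid on all of $H^1(\mathcal{T})$, not merely on piecewise polynomials where inverse estimates would otherwise shortcut the argument, and a delicate use of edgewise trace inequalities, with the boundary edges simultaneously enforcing $\phi^c\in H_0^1(\Omega)$ being the most technical point. Since this is precisely the content of the cited works \cite{Poincare_Suli, Poincare_Brenner}, the proof here amounts to importing their broken Friedrichs inequality and pairing it with the two-dimensional Sobolev embedding as above.
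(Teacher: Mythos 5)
The paper never proves this lemma: it is imported verbatim from the cited references (Brenner's Poincar\'e--Friedrichs inequalities for piecewise $H^1$ functions and Lasis--S\"uli's broken Sobolev embeddings), so there is no internal argument to compare yours against; what you have written is, in substance, a correct reconstruction of the proof in those references. Your architecture is the standard one and it is sound: split $\phi=\phi^c+(\phi-\phi^c)$ with a conforming companion $\phi^c\in H_0^1(\Omega)$ whose deviation is controlled by jumps over \emph{all} edges (the boundary edges being what allows $\phi^c$ to vanish on $\partial\Omega$), apply the two-dimensional embedding $H_0^1(\Omega)\hookrightarrow L^r(\Omega)$, $r<\infty$, to $\phi^c$, and handle the remainder by the scaled elementwise Sobolev inequality; your scaling exponents $h_T^{2/r-1}$ and $h_T^{2/r}$ are correct, and the $\ell^2\hookrightarrow\ell^r$ summation closes because the leftover powers of $h_T$ are nonnegative. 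Two details deserve sharpening, though neither is a genuine gap. First, an Oswald-type averaging operator requires nodal (piecewise-polynomial) data, so for a general $\phi\in H^1(\mathcal{T})$ one must \emph{first} project elementwise onto, say, $P_1(T)$ (controlling the newly created jumps by elementwise trace estimates) and \emph{then} average; a Scott--Zhang operator acts on conforming $H^1$ functions and is not the right tool for the broken function itself, although your instinct that an extra approximation step is needed for non-polynomial $\phi$ is exactly right --- this is the technical heart of the cited papers. Second, the comparison $\sum_{E\in\mathcal{E}}h_E^{-1}\norm{[\phi]}_{0,E}^2\lesssim J^\sigma(\phi,\phi)$ is immediate if the penalty weight is the local $\sigma/h_E$, but the paper's $J^\sigma$ literally uses the global $h$, in which case one needs quasi-uniformity of the mesh; taking $\sigma$ ``sufficiently large'' does not repair this on strongly graded meshes, since $\sigma$ is a fixed constant. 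With those two repairs stated, your argument is a complete and correct proof of the lemma.
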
 
	\medskip
	
	For boundedness and coercivity results of $A_\dg(\cdot,\cdot)$ and the boundedness results of $B_\dg(\cdot,\cdot,\cdot, \cdot)$ and $C_\dg(\cdot,\cdot)$, it is enough to prove the corresponding results for $a_\dg(\cdot,\cdot)$, $b_\dg(\cdot,\cdot, \cdot,\cdot)$ and $c_\dg(\cdot,\cdot)$, respectively.
	\begin{lem}\label{2.3.17}  \emph{(Boundedness and coercivity  of $ A_{\rm dG}(\cdot, \cdot)$)}
		\cite{Prudhomme}
		For	$\Theta_{\rm dG}, \Phi_{\rm dG} \in \V_h$,
		\begin{align*}
		A_{\rm dG}(\Theta_{\rm dG},\Phi_{\rm dG})\leq C_A \vertiii{\Theta_{\rm dG}}_{\rm dG} \vertiii{\Phi_{\rm dG}}_{\rm dG},
		\end{align*}
		where $C_A$ depends on the penalty parameter $\sigma$ and the constant from trace inequality.
		 For a sufficiently large parameter $\sigma$, there exists a 
		  positive constant $\alpha > 0$ such that 
		\begin{align*}
		\alpha \vertiii{\Phi_{\rm dG}}_{\rm dG}^2\leq A_{\rm dG}(\Phi_{\rm dG}, \Phi_{\rm dG}) \text{ for all } \Phi_{\rm dG} \in \V_h.
		\end{align*}
	\end{lem}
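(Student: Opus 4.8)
The plan is to reduce everything to the scalar bilinear form $a_{\dg}(\cdot,\cdot)$ on $V_h$, since $A_{\rm dG}(\Theta,\Phi)=a_{\dg}(\theta_1,\phi_1)+a_{\dg}(\theta_2,\phi_2)$ and the norm obeys $\vertiiidg{\Phi_\dg}^2=\norm{\phi_1}_\dg^2+\norm{\phi_2}_\dg^2$; once boundedness and coercivity are known for $a_{\dg}$ with respect to $\norm{\cdot}_\dg$, the vector statements follow by summing the two components and a Cauchy--Schwarz step in $\mathbb{R}^2$. The single analytic tool I would rely on throughout is the discrete trace inequality $\norm{\partial_\eta v}_{L^2(E)}\lesssim h_T^{-1/2}\abs{v}_{H^1(T)}$, valid for piecewise linear polynomials on each triangle $T$ adjacent to an edge $E$, together with the shape-regularity of $\mathcal{T}$.

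For boundedness I would split $a_{\dg}(\theta,\phi)=a_h(\theta,\phi)-J(\theta,\phi)+\lambda J(\phi,\theta)+J^\sigma(\theta,\phi)$ and bound the four terms in turn. The volume term is immediate from Cauchy--Schwarz, $a_h(\theta,\phi)\le\abs{\theta}_{\textrm{H}^1(\mathcal{T})}\abs{\phi}_{\textrm{H}^1(\mathcal{T})}\le\norm{\theta}_\dg\norm{\phi}_\dg$, and the penalty term is controlled the same way using $J^\sigma(\theta,\phi)\le J^\sigma(\theta,\theta)^{1/2}J^\sigma(\phi,\phi)^{1/2}$. The consistency terms $J(\theta,\phi)$ and $J(\phi,\theta)$ are the only nontrivial ones: on each edge I estimate $\int_E\{\partial_\eta\theta\}[\phi]\ds\le\norm{\{\partial_\eta\theta\}}_{L^2(E)}\norm{[\phi]}_{L^2(E)}$, apply the trace inequality to the average of the normal derivative, and then match the resulting $h^{-1/2}$ against the factor $(\sigma/h)^{1/2}$ concealed in $J^\sigma$. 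Summing over $\mathcal{E}$ and applying discrete Cauchy--Schwarz yields $\abs{J(\theta,\phi)}\lesssim\sigma^{-1/2}\norm{\theta}_\dg\norm{\phi}_\dg$, so collecting the four estimates gives the claim with $C_A$ depending on $\sigma$ and the trace constant.

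For coercivity I would first make the diagonal form explicit. Setting $\theta=\phi$ and using $a_h(\phi,\phi)=\abs{\phi}_{\textrm{H}^1(\mathcal{T})}^2$ gives
\begin{align*}
a_{\dg}(\phi,\phi)=\abs{\phi}_{\textrm{H}^1(\mathcal{T})}^2+(\lambda-1)\,J(\phi,\phi)+J^\sigma(\phi,\phi).
\end{align*}
When $\lambda=1$ the cross term disappears and coercivity holds with constant $1$ for any $\sigma>0$. For $\lambda\in\{-1,0\}$ I would absorb $(\lambda-1)J(\phi,\phi)$: the same trace estimate gives $\abs{J(\phi,\phi)}\lesssim\sigma^{-1/2}\abs{\phi}_{\textrm{H}^1(\mathcal{T})}J^\sigma(\phi,\phi)^{1/2}$, and a weighted Young inequality with a small parameter $\delta>0$ splits this into $\tfrac{\delta}{2}\abs{\phi}_{\textrm{H}^1(\mathcal{T})}^2+\tfrac{C^2}{2\delta\sigma}J^\sigma(\phi,\phi)$. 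Choosing $\delta<2$ and then $\sigma$ large enough that $C^2/(2\delta\sigma)<1$ leaves a strictly positive multiple of $\abs{\phi}_{\textrm{H}^1(\mathcal{T})}^2+J^\sigma(\phi,\phi)=\norm{\phi}_\dg^2$, which is exactly the asserted coercivity with $\alpha=\min\{1-\delta/2,\,1-C^2/(2\delta\sigma)\}$.

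I expect the handling of the normal-derivative consistency terms $J(\theta,\phi)$ to be the main obstacle: everything rests on the discrete trace inequality converting the edge norm of $\partial_\eta v$ into the elementwise $H^1$ seminorm at the price of $h_T^{-1/2}$, and on the observation that this factor is precisely cancelled by the $\sigma/h$ scaling built into the penalty term. Making the resulting balance quantitative is what fixes the threshold on $\sigma$ in the coercivity statement, and this bookkeeping --- rather than the routine Cauchy--Schwarz estimates --- is where the genuine care is required.
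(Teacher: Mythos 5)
Your proof is correct and is essentially the argument the paper relies on: the paper gives no proof of this lemma at all, delegating it to the cited reference \cite{Prudhomme}, and your treatment --- reduction to the scalar form $a_{\dg}$, Cauchy--Schwarz on the volume and penalty terms, the discrete trace inequality to convert $\norm{\{\partial\theta/\partial\eta\}}_{L^2(E)}$ into $h^{-1/2}\abs{\theta}_{H^1(T)}$ so that it pairs against the $(\sigma/h)^{1/2}$ weight in $J^\sigma$, and the Young-inequality absorption of $(\lambda-1)J(\phi,\phi)$ for sufficiently large $\sigma$ --- is exactly the standard interior-penalty argument that reference contains. The only point to tighten is bookkeeping: for $\lambda=-1$ the cross term carries the factor $\abs{\lambda-1}=2$, so the smallness thresholds on $\delta$ and the largeness threshold on $\sigma$ must accommodate that factor, which your generic constant $C$ can absorb without changing the structure of the proof.
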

	\begin{lem}\label{2.3.31}\emph{(Boundedness of $B_\dg(\cdot,\cdot,\cdot, \cdot) $ and $ C_\dg(\cdot, \cdot)$)}
	For $\Xi,\boldsymbol \eta,\Theta,\Phi \in \mathbf{H}^1(\mathcal{T})$, 
	it holds that
	\begin{align}\label{eq:2.3.32}
	B_{\rm dG}(\Xi,\boldsymbol \eta,\Theta,\Phi ) \lesssim \epsilon^{-2} \vertiii{\Xi}_{\rm dG} \vertiii{\boldsymbol \eta}_{\rm dG} \vertiii{\Theta}_{\rm dG} \vertiii{\Phi}_{\rm dG} \text{ and, } C_{\rm dG}(\Theta,\Phi)\lesssim \epsilon^{-2} \norm{\Theta}_{\rm dG} \norm{\Phi}_{\rm dG} ,
	\end{align}
	and for all  $\Xi , \boldsymbol \eta \in \mathbf{H}^2(\Omega)$, $\Theta$, $\Phi \in  \mathbf{H}^1(\mathcal{T})$,
	\begin{align}\label{eq:2.3.33}
	B_{\rm dG}(\Xi,\boldsymbol \eta,\Theta,\Phi)\lesssim \epsilon^{-2}  \vertiii{\Xi}_2 \vertiii{\boldsymbol \eta}_2 \vertiii{\Theta}_0 \vertiii{\Phi}_0,
	\end{align}
	where the hidden constant in $"\lesssim"$ depends on the constants from $
	C_\epsilon$, $C_P$ and $C_S$.
\end{lem}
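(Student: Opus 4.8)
The plan is to mirror the proof of Lemma \ref{2.6}, exploiting the fact that $b_\dg$ and $c_\dg$ contain only element-wise volume integrals and no edge contributions, so that $\sum_{T\in\mathcal{T}}\int_T=\int_\Omega$ for the $L^1(\Omega)$ integrand (the element boundaries having measure zero) and the broken structure enters only through the choice of norm on the factors. As in Lemma \ref{2.6}, it suffices to establish the bounds for the scalar forms $b_\dg(\cdot,\cdot,\cdot,\cdot)$ and $c_\dg(\cdot,\cdot)$; the vector estimates \eqref{eq:2.3.32} and \eqref{eq:2.3.33} then follow from the defining identity \eqref{3.2} by grouping the six terms and applying $\norm{\phi_i}_\dg\le\vertiiidg{\Phi}$ together with a triangle inequality. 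The one structural change from Lemma \ref{2.6} is that the functions now lie only in $H^1(\mathcal{T})$, so the conforming Sobolev embedding $H^1(\Omega)\hookrightarrow L^4(\Omega)$ is no longer available; its role is taken over by the broken Poincar\'e-type embedding of Lemma \ref{lem2.3.3}, which supplies $\norm{\phi}_{L^r(\Omega)}\le C_P\norm{\phi}_\dg$ for every finite $r$ and every $\phi\in H^1(\mathcal{T})$.

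For the first estimate in \eqref{eq:2.3.32}, I would write $b_\dg(\xi,\eta,\theta,\phi)=C_\epsilon\int_\Omega\xi\eta\theta\phi\dx$ and apply the generalised H\"older inequality with four $L^4$ factors to obtain $b_\dg(\xi,\eta,\theta,\phi)\le C_\epsilon\norm{\xi}_{0,4}\norm{\eta}_{0,4}\norm{\theta}_{0,4}\norm{\phi}_{0,4}$. Invoking Lemma \ref{lem2.3.3} with $r=4$ on each factor converts every $L^4$ norm into a dG norm at the cost of a factor $C_P^4$, which yields $b_\dg(\xi,\eta,\theta,\phi)\lesssim\epsilon^{-2}\norm{\xi}_\dg\norm{\eta}_\dg\norm{\theta}_\dg\norm{\phi}_\dg$, the hidden constant absorbing $C_P^4$ and the constant from $C_\epsilon$. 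Grouping the six $b_\dg$-terms in \eqref{3.2} and dominating each scalar dG norm by the corresponding vector norm $\vertiiidg{\cdot}$ then gives the stated bound for $B_\dg$.

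The $C_\dg$ bound is simpler: from $c_\dg(\theta,\phi)=-C_\epsilon\int_\Omega\theta\phi\dx$ I would use Cauchy--Schwarz to get $c_\dg(\theta,\phi)\le C_\epsilon\norm{\theta}_0\norm{\phi}_0$ and then apply Lemma \ref{lem2.3.3} with $r=2$ to each $L^2$ factor, after which grouping the two scalar terms delivers $C_\dg(\Theta,\Phi)\lesssim\epsilon^{-2}\vertiiidg{\Theta}\vertiiidg{\Phi}$. For the $H^2$-based estimate \eqref{eq:2.3.33}, the hypothesis $\xi,\eta\in H^2(\Omega)$ lets me invoke the conforming embedding $H^2(\Omega)\hookrightarrow L^\infty(\Omega)$ to pull $\xi$ and $\eta$ out in $L^\infty$, namely $\abs{b_\dg(\xi,\eta,\theta,\phi)}\le C_\epsilon\norm{\xi}_{0,\infty}\norm{\eta}_{0,\infty}\int_\Omega\abs{\theta\phi}\dx$, followed by Cauchy--Schwarz on $\int_\Omega\abs{\theta\phi}\dx$ and the embedding constant $C_S$; this produces $b_\dg(\xi,\eta,\theta,\phi)\lesssim\epsilon^{-2}\norm{\xi}_2\norm{\eta}_2\norm{\theta}_0\norm{\phi}_0$, and grouping the terms as before gives \eqref{eq:2.3.33}.

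There is no genuine obstacle here; the argument is routine once Lemma \ref{lem2.3.3} is available. The only point that requires a little care is the bookkeeping of constants in the final ``$\lesssim$'': they must be seen to depend only on $C_P$, $C_S$ and the constant hidden in $C_\epsilon=2\epsilon^{-2}$, and in particular to be independent of $h$, so that the advertised $\epsilon^{-2}$ scaling is captured exactly and the mesh-independence needed in the subsequent fixed-point and inf-sup arguments is preserved.
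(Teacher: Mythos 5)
Your proposal is correct and follows essentially the same route as the paper's proof: reduction to the scalar forms $b_{\rm dG}$ and $c_{\rm dG}$, H\"older's inequality with four $L^4$ factors converted to dG norms via the broken Poincar\'e-type inequality of Lemma \ref{lem2.3.3}, Cauchy--Schwarz plus Lemma \ref{lem2.3.3} for $c_{\rm dG}$, and the embedding $H^2(\Omega)\hookrightarrow L^\infty(\Omega)$ together with Cauchy--Schwarz for \eqref{eq:2.3.33}, followed by grouping the terms in \eqref{3.2}. Your additional remarks on the absence of edge terms and on the constant bookkeeping are consistent with the paper's argument.
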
 
	\begin{proof}
		We prove the boundedness results for $b_\dg(\cdot,\cdot,\cdot,\cdot)$ and $c_\dg(\cdot, \cdot)$. Then a use of definitions of $B_\dg(\cdot, \cdot,\cdot,\cdot)$ and $C_\dg(\cdot,\cdot)$, discrete Cauchy-Schwarz inequality and a grouping of the terms yields the required results. For $ \xi, \,  \eta,\, \theta$ and $\phi \in H^1(\mathcal{T})$, a use of Holder's inequality and Cauchy-Schwarz inequality along with Lemma \ref{lem2.3.3} leads to
		\begin{align} \label{4.3.1}
		b_\dg(\xi,   \eta, \theta, \phi ) \leq C_\epsilon \norm{\xi}_{0,4}\norm{ \eta}_{0,4}\norm{\theta}_{0,4}\norm{\phi}_{0,4}   \lesssim \epsilon^{-2} \norm{\xi}_\dg \norm{\eta}_\dg \norm{\theta}_\dg \norm{\phi}_\dg\\
		\text{ and, }
		c_\dg(\theta,\phi)
		\leq C_\epsilon \norm{\theta}_0 \norm{\phi}_0 \notag \lesssim \epsilon^{-2} \norm{\theta}_\dg \norm{\phi}_\dg \notag.
		\end{align}
		The proof of  \eqref{eq:2.3.33} follows analogously to that of  the proof of \eqref{eq:2.1.5.3} with a use of the 
		imbedding result $H^2(\Omega)\hookrightarrow L^\infty(\Omega)$ and Cauchy-Schwarz inequality.		
	\end{proof}
	\begin{rem}\label{4.5}
	A use of \eqref{4.3.1} leads to the following boundedness estimate 
	\begin{align*}
	\sum_{ T \in \mathcal{T}}\int_T (\Xi \cdot \boldsymbol \eta) (\Theta \cdot \Phi) \,{\rm dx} \lesssim \vertiii{\Xi}_{\rm dG}\vertiii{\boldsymbol \eta}_{\rm dG}\vertiii{\Theta}_{\rm dG}\vertiii{\Phi}_{\rm dG} \text{ for all } \Xi,\boldsymbol \eta,\Theta,\Phi \in \V_h.
	\end{align*}    		
    \end{rem}
	\noindent The next two lemmas describe the estimates for interpolation and enrichment operators that are crucial for the error estimates. 
	\begin{lem}\emph{(Interpolation estimate)}\cite{brenner} \label{2.3.35.1}
		For ${\rm v} \in H^s(\Omega) \text{ with } s \geq 1$, there exists  ${\rm{I}_{\rm dG}v} \in V_h$ such that for any $T \in \mathcal{T}$,
		\begin{align*}
		\norm{\rm v-\rm{I}_{\rm dG}v }_{H^l(T)} \leq C_I h_T ^{s-l} \norm{\rm v}_{H^s(T)},
		\end{align*}	
		for $l=0,1$ and some positive constant $C_I$ independent of $h$.
	\end{lem}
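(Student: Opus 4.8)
The plan is to establish this as a standard local polynomial approximation estimate via the Bramble--Hilbert lemma and an affine scaling argument. A key simplification is that $V_h$ imposes no inter-element continuity, so $I_{\rm dG}$ may be constructed element by element; this sidesteps the usual difficulty of building a globally continuous interpolant and reduces everything to approximation on a single triangle.

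First I would fix the reference triangle $\hat T$ and choose on it a bounded linear projection $\hat\Pi\colon H^s(\hat T)\to P_1(\hat T)$ that reproduces polynomials of degree $\leq\min(1,\lceil s\rceil-1)$. Because in two dimensions $H^1(\hat T)$ does not embed into $C^0(\hat T)$, point-value (Lagrange) interpolation is unavailable at the endpoint $s=1$; instead I would take $\hat\Pi$ to be the $L^2(\hat T)$-orthogonal projection onto $P_1(\hat T)$ (equivalently, an averaged Taylor polynomial), which is well defined and bounded on $H^s(\hat T)$ for every $s\geq 1$ and reproduces at least the constants. The Bramble--Hilbert (Deny--Lions) lemma then yields, on the fixed domain $\hat T$,
\begin{align*}
\norm{\hat v-\hat\Pi\hat v}_{H^l(\hat T)}\leq \hat C\,\abs{\hat v}_{H^s(\hat T)}\qquad(l=0,1),
\end{align*}
with $\hat C$ depending only on $\hat T$, $s$, and $l$.

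Next I would transport this estimate to a general $T\in\mathcal T$ through the affine map $F_T\colon\hat T\to T$, setting $\hat v=v\circ F_T$ and $I_{\rm dG}v=(\hat\Pi\hat v)\circ F_T^{-1}\in P_1(T)$. The standard change-of-variables bounds for Sobolev seminorms under an affine map in two dimensions, namely $\abs{\hat v}_{H^k(\hat T)}\lesssim h_T^{k-1}\abs{v}_{H^k(T)}$ and $\abs{w}_{H^l(T)}\lesssim h_T^{1-l}\abs{\hat w}_{H^l(\hat T)}$, rest on $\norm{DF_T}\lesssim h_T$ and $\norm{DF_T^{-1}}\lesssim h_T^{-1}$, which are guaranteed by the shape-regularity of $\mathcal T$ (the ball of radius $\rho h_T$ inside $T$ controls $\det DF_T$ and $DF_T^{-1}$). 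Chaining these with the reference estimate gives $\abs{v-I_{\rm dG}v}_{H^l(T)}\lesssim h_T^{s-l}\abs{v}_{H^s(T)}$ for $l=0,1$; summing the $l=0,1$ contributions and bounding the seminorm by the full norm $\norm{v}_{H^s(T)}$ produces the asserted inequality, with $C_I$ absorbing $\hat C$ and the shape-regularity constant $\rho$.

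The main obstacle is the low-regularity endpoint $s=1$ together with possibly non-integer $s$. The first is resolved precisely by using the averaged/$L^2$ projection rather than a nodal interpolant, so that $\hat\Pi$ remains bounded on $H^1(\hat T)$; the second requires the fractional form of the Bramble--Hilbert lemma and the corresponding scaling of the Sobolev--Slobodeckij seminorm, which I would either invoke directly or recover by interpolating between the integer endpoints $s=1$ and $s=2$ that bracket the range relevant to $P_1$ approximation. Everything else is routine affine bookkeeping.
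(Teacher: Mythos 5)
Your proof is correct and is essentially the argument behind the result that the paper does not prove but simply cites from \cite{brenner}: an element-wise $L^2$-projection (equivalently, an averaged Taylor polynomial), the Bramble--Hilbert lemma on a reference triangle, and affine scaling under shape regularity, with the non-integer case handled by the fractional Bramble--Hilbert lemma or real interpolation between $s=1$ and $s=2$; your observation that the discontinuity of $V_h$ permits a purely local construction (so no Cl\'ement/Scott--Zhang machinery is needed) is exactly the right simplification. The only caveat worth recording is that the lemma as stated in the paper, with rate $h_T^{s-l}$ for all $s\geq 1$ and piecewise \emph{linear} $V_h$, can only hold for $s\leq 2$ (approximation by $P_1$ saturates at order $h_T^{2-l}$), which is precisely the range your proof confines itself to and the only case ($s=2$) the paper ever invokes.
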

	\begin{lem} \emph{(Enrichment operator).} \cite{Poincare_Brenner,Karakashian} \label{2.3.35}
		Let ${\rm E}_h : V_h \rightarrow V_C\subset H^1(\Omega)$ be an enrichment operator with $V_C $ being the Lagrange $P_r$ conforming finite element space associated with the triangulation $\mathcal{T}$. Then for any $\phi_{\rm dG} \in V_h$, the following results hold:		
		\begin{align*}
		&\sum_{ T \in \mathcal{T}} h^{-2}\norm{{\rm E}_h\phi_{\rm dG} -\phi_{\rm dG}}_{L^2(T)}^2 + \norm{{\rm E}_h\phi_{\rm dG}}_{H^1(\Omega)}^2 \leq C_{en_1} \norm{\phi_{\rm dG}}_{\rm dG}^2, \\& \text{ and } \norm{{\rm E}_h\phi_{\rm dG} - \phi_{\rm dG}}_{\rm dG}^2 \leq C_{en_2}(\sum_{ E \in \mathcal{E}}\int_{ E}\frac{1}{h}[\phi_{\rm dG}]^2 \ds ) ,
		\end{align*}
		where $C_{en_1}$ and $C_{en_2}$ are constants independent of $h$.
	\end{lem}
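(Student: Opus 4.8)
The plan is to exhibit a concrete averaging construction of ${\rm E}_h$ and then to bound every term appearing in the two estimates by the edge jumps $[\phi_{\rm dG}]$. For each Lagrange node $p$ of the conforming $P_r$ space $V_C$, I would define $({\rm E}_h\phi_{\rm dG})(p)$ to be the arithmetic mean of the nodal values $\phi_{\rm dG}|_T(p)$ over the patch $\omega_p$ of triangles $T\in\mathcal{T}$ containing $p$ (with an analogous one-sided prescription at the boundary nodes on $\mathcal{E}_D$). Because this value is single-valued at every node, ${\rm E}_h\phi_{\rm dG}\in V_C\subset H^1(\Omega)$, as required. Everything then hinges on comparing ${\rm E}_h\phi_{\rm dG}$ with $\phi_{\rm dG}$ element by element.

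The heart of the argument is a scale-invariant \emph{local} estimate on a fixed triangle $T$. On $T$ both ${\rm E}_h\phi_{\rm dG}$ and $\phi_{\rm dG}|_T$ lie in $P_r(T)$, so their difference is the polynomial determined by the nodal discrepancies $d_p:=({\rm E}_h\phi_{\rm dG})(p)-\phi_{\rm dG}|_T(p)$ for the nodes $p\in T$. By the definition of the average, each $d_p$ is a convex combination of the cross-element differences $\phi_{\rm dG}|_T(p)-\phi_{\rm dG}|_{T'}(p)$ over elements $T'$ sharing $p$, hence a finite sum of pointwise jumps $[\phi_{\rm dG}](p)$ over the edges $E$ emanating from $p$. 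For a polynomial of degree $\le r$ on an edge of length $\simeq h$, the one-dimensional norm equivalence $\abs{q(p)}^2\lesssim h^{-1}\norm{q}_{L^2(E)}^2$ gives $\abs{[\phi_{\rm dG}](p)}^2\lesssim h^{-1}\int_E[\phi_{\rm dG}]^2\ds$, so that summing over the edges at $p$ yields $\abs{d_p}^2\lesssim \sum_{E\ni p}h^{-1}\int_E[\phi_{\rm dG}]^2\ds$. Finally the equivalence of norms on the finite-dimensional space $P_r(T)$, together with the standard $L^2$ scaling in two dimensions, produces $\norm{{\rm E}_h\phi_{\rm dG}-\phi_{\rm dG}}_{L^2(T)}^2\lesssim h_T^2\sum_{p\in T}\abs{d_p}^2$.

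Assembling these local bounds over $\mathcal{T}$ and using shape-regularity (each edge belongs to a bounded number of patches) gives $\sum_{T\in\mathcal{T}}h^{-2}\norm{{\rm E}_h\phi_{\rm dG}-\phi_{\rm dG}}_{L^2(T)}^2\lesssim\sum_{E\in\mathcal{E}}\int_E\frac{1}{h}[\phi_{\rm dG}]^2\ds$, which is $\sigma^{-1}J^\sigma(\phi_{\rm dG},\phi_{\rm dG})\le\norm{\phi_{\rm dG}}_{\rm dG}^2$. To pass to the second inequality of the lemma, I would bound the broken seminorm by a standard inverse estimate $\abs{\cdot}_{H^1(T)}^2\lesssim h_T^{-2}\norm{\cdot}_{L^2(T)}^2$ applied to ${\rm E}_h\phi_{\rm dG}-\phi_{\rm dG}$, and observe that the jump contribution in $\norm{{\rm E}_h\phi_{\rm dG}-\phi_{\rm dG}}_{\rm dG}$ reduces, by continuity of ${\rm E}_h\phi_{\rm dG}$ across interior edges, to $J^\sigma(\phi_{\rm dG},\phi_{\rm dG})$ on $\mathcal{E}_i$ plus the boundary terms on $\mathcal{E}_D$. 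For the first inequality I would split $\norm{{\rm E}_h\phi_{\rm dG}}_{H^1(\Omega)}\le\abs{\phi_{\rm dG}}_{H^1(\mathcal{T})}+\norm{{\rm E}_h\phi_{\rm dG}-\phi_{\rm dG}}_{H^1(\mathcal{T})}+\norm{{\rm E}_h\phi_{\rm dG}}_{L^2(\Omega)}$: the middle term is controlled by the inverse inequality and the $L^2$ bound just proved, while the $L^2$ term is handled through the triangle inequality together with the Poincar\'e-type bound $\norm{\phi_{\rm dG}}_{L^2(\Omega)}\le C_P\norm{\phi_{\rm dG}}_{\rm dG}$ of Lemma \ref{lem2.3.3}. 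The main obstacle is the local nodal-discrepancy-to-jump identity and its correct scaling in $h$; the remaining bookkeeping of overlapping patches and the treatment of boundary nodes are routine under shape-regularity.
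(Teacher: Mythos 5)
Your proposal is correct, and it is essentially the argument of the sources the paper itself cites for this lemma (Brenner's Poincar\'e--Friedrichs paper and Karakashian--Pascal): the paper gives no proof of its own, and the standard proof in those references is exactly your nodal-averaging construction, with the nodal discrepancy written as a sum of edge jumps, the one-dimensional polynomial inverse estimate $\abs{q(p)}^2\lesssim h^{-1}\norm{q}_{L^2(E)}^2$, norm equivalence on $P_r(T)$ with $L^2$ scaling, and finite patch overlap from shape regularity. The only implicit assumption to note is quasi-uniformity (edge lengths $\simeq h$), which is consistent with the paper's own convention of using the global $h$ in the jump terms of $\norm{\cdot}_{\rm dG}$.
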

	\noindent For all $\Theta_{\dg}, \Phi_{\dg} \in \V_h$,  define the discrete bilinear form $\dual{{\rm DN}_h(\Psi)\cdot, \cdot}$  by   $$ \dual{{\rm DN}_h(\Psi)\Theta_{\rm dG}, \Phi_{\rm dG}}: = A_{\rm dG}(\Theta_{\rm dG},\Phi_{\rm dG})+3B_{\rm dG}(\Psi,\Psi,\Theta_{\rm dG},\Phi_{\rm dG})+C_{\rm dG}(\Theta_{\rm dG},\Phi_{\rm dG}).$$ 
	\begin{thm}\label{2.4.1.1}
			Let $\Psi$ be a regular solution of the non-linear system
		\eqref{eq:2.1.5}. For a given fixed $\epsilon>0,$ a sufficiently large $\sigma$ and a sufficiently small discretization parameter chosen such that $h=O(\epsilon^{2} )$, there  exists a constant $\beta_0$ such that the following discrete inf-sup condition holds:
		\begin{align*}
		0< \beta_0 \leq \inf_{\substack{\Theta_{\rm dG} \in \V_h \\  \vertiii{\Theta_{\rm dG}}_{\rm dG}=1}} \sup_{\substack{\Phi_{\rm dG} \in \V_h \\  \vertiii{\Phi_{\rm dG}}_{\rm dG}=1}}\dual{{\rm DN}_h(\Psi)\Theta_{\rm dG}, \Phi_{\rm dG}} .
		\end{align*}	
	\end{thm}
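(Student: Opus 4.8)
The plan is to regard $\dual{{\rm DN}_h(\Psi)\cdot,\cdot}$ as the coercive form $A_{\dg}$ plus a lower-order indefinite perturbation, and to run a Schatz-type (G{\aa}rding-plus-duality) argument that promotes the coercivity of $A_{\dg}$ in Lemma~\ref{2.3.17} to the full discrete inf-sup, using the continuous inf-sup \eqref{2.9} as the essential input. Write $\dual{{\rm DN}_h(\Psi)\Theta_{\dg},\Phi_{\dg}}=A_{\dg}(\Theta_{\dg},\Phi_{\dg})+K_h(\Theta_{\dg},\Phi_{\dg})$ with $K_h(\Theta_{\dg},\Phi_{\dg}):=3B_{\dg}(\Psi,\Psi,\Theta_{\dg},\Phi_{\dg})+C_{\dg}(\Theta_{\dg},\Phi_{\dg})$. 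The decisive observation is that, since a regular solution satisfies $\Psi\in\mathbf{H}^2(\Omega)$ by Lemma~\ref{regularity}, the estimate \eqref{eq:2.3.33} gives $\abs{K_h(\Theta_{\dg},\Phi_{\dg})}\lesssim\epsilon^{-2}\vertiii{\Theta_{\dg}}_0\vertiii{\Phi_{\dg}}_0$: the perturbation is controlled by the $\mathbf{L}^2$ norms of its arguments, hence is genuinely lower order and amenable to a duality treatment.

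First I would record a G{\aa}rding inequality. Set $S:=\sup_{0\neq\Phi_{\dg}\in\V_h}\dual{{\rm DN}_h(\Psi)\Theta_{\dg},\Phi_{\dg}}/\vertiiidg{\Phi_{\dg}}$, the quantity to be bounded below. By the coercivity in Lemma~\ref{2.3.17} and the splitting above,
\begin{align*}
\alpha\vertiiidg{\Theta_{\dg}}^2\le A_{\dg}(\Theta_{\dg},\Theta_{\dg})=\dual{{\rm DN}_h(\Psi)\Theta_{\dg},\Theta_{\dg}}-K_h(\Theta_{\dg},\Theta_{\dg})\lesssim S\,\vertiiidg{\Theta_{\dg}}+\epsilon^{-2}\vertiii{\Theta_{\dg}}_0^2 .
\end{align*}
Everything now hinges on absorbing the $\mathbf{L}^2$ term $\epsilon^{-2}\vertiii{\Theta_{\dg}}_0^2$.

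The core step is an Aubin--Nitsche duality bounding $\vertiii{\Theta_{\dg}}_0$. Let $\theta:={\rm E}_h\Theta_{\dg}\in\V$ be the componentwise enrichment; Lemma~\ref{2.3.35} together with a large penalty gives $\vertiiidg{\theta-\Theta_{\dg}}\lesssim\sigma^{-1/2}\vertiiidg{\Theta_{\dg}}$ and $\vertiii{\theta-\Theta_{\dg}}_0\lesssim h\,\vertiiidg{\Theta_{\dg}}$. Let $\zeta\in\V$ solve the adjoint linearized problem $\dual{{\rm DN}(\Psi)v,\zeta}=(\theta,v)$ for all $v\in\V$; this is solvable by the second equality in \eqref{2.9}, and since $3B(\Psi,\Psi,\cdot,\cdot)+C(\cdot,\cdot)$ is a zeroth-order perturbation of $-\Delta$ with $\Psi\in\mathbf{H}^2(\Omega)$, the arguments of Lemma~\ref{regularity} give $\zeta\in\mathbf{H}^2(\Omega)$ with $\vertiii{\zeta}_2\lesssim\vertiii{\theta}_0$ (constant depending on $\epsilon$). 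Choosing $v=\theta$ yields $\vertiii{\theta}_0^2=\dual{{\rm DN}(\Psi)\theta,\zeta}$; I would then insert the conforming interpolant ${\rm I}_{\dg}\zeta$ (Lemma~\ref{2.3.35.1}, for which all jump terms vanish), use consistency on the conforming pair and switch the first argument from $\theta$ to $\Theta_{\dg}$, and bound the resulting discrete form by $S\,\vertiiidg{{\rm I}_{\dg}\zeta}\lesssim S\,\vertiii{\zeta}_1$. Accounting for the interpolation ($\lesssim h\vertiii{\zeta}_2$), enrichment ($\lesssim\sigma^{-1/2}+h$) and lower-order ($\lesssim\epsilon^{-2}h$) errors and absorbing the $O(\epsilon^{-2}h^2)$ self-term, this delivers $\vertiii{\Theta_{\dg}}_0\lesssim S+(\sigma^{-1/2}+\epsilon^{-2}h)\vertiiidg{\Theta_{\dg}}$, up to $\epsilon$-dependent constants.

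Substituting into the G{\aa}rding inequality gives
\begin{align*}
\alpha\vertiiidg{\Theta_{\dg}}^2\lesssim S\,\vertiiidg{\Theta_{\dg}}+\epsilon^{-2}S^2+\epsilon^{-2}\big(\sigma^{-1}+\epsilon^{-4}h^2\big)\vertiiidg{\Theta_{\dg}}^2 .
\end{align*}
For $\sigma$ sufficiently large and $h=O(\epsilon^{2})$, the coefficient of $\vertiiidg{\Theta_{\dg}}^2$ on the right is strictly below $\alpha$ and can be absorbed into the left, leaving $\vertiiidg{\Theta_{\dg}}\lesssim\beta_0^{-1}S$, which is the claimed inf-sup with an $\epsilon$-dependent $\beta_0>0$. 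I expect this final balancing to be the main obstacle: the $\mathbf{L}^2$ term enters weighted by $\epsilon^{-2}$ through $K_h$, while the duality constant in $\vertiii{\zeta}_2\lesssim\vertiii{\theta}_0$ itself degenerates as $\epsilon\to0$ (the inverse of the linearized operator blows up, as recalled in the introduction); matching these competing powers of $\epsilon$ against the approximation orders in $h$ is exactly what pins down the threshold $h=O(\epsilon^{2})$ and the largeness of $\sigma$, and is the step demanding the most care.
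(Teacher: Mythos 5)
Your strategy is genuinely different from the paper's. The paper never forms a G{\aa}rding inequality or a duality bound for $\vertiii{\Theta_{\rm dG}}_0$; instead, for a given $\Theta_{\rm dG}$ it introduces two auxiliary Poisson problems \eqref{2.4.2.1}--\eqref{2.4.3.1} whose solutions $\boldsymbol\xi_\epsilon,\boldsymbol\eta_\epsilon$ carry the lower-order terms at the $\mathbf{H}^2$ level, applies the continuous inf-sup \eqref{2.9} to the enriched function ${\rm E}_h\Theta_{\rm dG}$, observes $\dual{{\rm DN}(\Psi){\rm E}_h\Theta_{\rm dG},\Phi}=A({\rm E}_h\Theta_{\rm dG}+\boldsymbol\eta_\epsilon,\Phi)$, and then bounds $\vertiiidg{\Theta_{\rm dG}+{\rm I}_{\rm dG}\boldsymbol\xi_\epsilon}$ by coercivity of $A_{\rm dG}$, trading $A_{\rm dG}({\rm I}_{\rm dG}\boldsymbol\xi_\epsilon,\cdot)$ back for the discrete lower-order terms up to enrichment and interpolation errors. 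The payoff of that construction is that $\epsilon^{-2}$ enters only linearly and always paired with one power of $h$ (through $h\vertiii{\boldsymbol\xi_\epsilon}_2\lesssim h\epsilon^{-2}$), which is exactly what produces the threshold $h<h_0=\epsilon^2/(2C_1)$ and lets $\sigma$ be large but $\epsilon$-independent (it is needed only for coercivity). Your individual steps are all justifiable with the paper's toolkit: the G{\aa}rding inequality follows from Lemma \ref{2.3.17} and \eqref{eq:2.3.33}; the adjoint problem is solvable by \eqref{2.9}, and its solution lies in $\mathbf{H}^2(\Omega)$ with $\vertiii{\zeta}_2\lesssim(1+\epsilon^{-2})\vertiii{\theta}_0$, which is precisely Lemma \ref{xiregularity}; the enrichment and interpolation estimates you invoke are Lemmas \ref{2.3.35} and \ref{2.3.35.1}.

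The gap is in the final balancing, and it is not mere bookkeeping. Your own displayed inequality requires absorbing $\epsilon^{-2}\bigl(\sigma^{-1}+\epsilon^{-4}h^2\bigr)\vertiiidg{\Theta_{\rm dG}}^2$ into $\alpha\vertiiidg{\Theta_{\rm dG}}^2$, which already forces $\sigma\gtrsim\epsilon^{-2}$ and $h\lesssim\epsilon^{3}$, not $h=O(\epsilon^{2})$: with $h=c\epsilon^2$ the offending coefficient is $c^2\epsilon^{-2}$, which is below $\alpha$ only if $c\lesssim\epsilon$. Moreover that display is optimistic, because you correctly flag that the dual regularity constant degenerates, $\vertiii{\zeta}_2\lesssim(1+\epsilon^{-2})\vertiii{\theta}_0$, but you do not propagate it; once it is carried through, the $\mathbf{L}^2$ bound reads $\vertiii{\Theta_{\rm dG}}_0\lesssim(1+\epsilon^{-2})\bigl[S+(\sigma^{-1/2}+\epsilon^{-2}h)\vertiiidg{\Theta_{\rm dG}}\bigr]$, and since the G{\aa}rding step squares this bound, the absorption condition becomes $\epsilon^{-2}(1+\epsilon^{-2})^2(\sigma^{-1}+\epsilon^{-4}h^2)\lesssim\alpha$, i.e.\ roughly $\sigma\gtrsim\epsilon^{-6}$ and $h\lesssim\epsilon^{5}$. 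This is the structural weakness of the Schatz route here: the $\epsilon^{-2}$ from the zeroth-order perturbation multiplies the \emph{square} of an $\epsilon$-degenerate duality bound, so the powers compound, whereas the paper's auxiliary-problem argument keeps a single factor $h\epsilon^{-2}$. For a fixed $\epsilon$ your argument does yield a positive ($\epsilon$-dependent) $\beta_0$, so the qualitative statement survives, but it establishes the inf-sup only under a strictly more restrictive mesh condition than the $h=O(\epsilon^2)$ asserted in the theorem and at the end of your proposal, and it forces the penalty parameter to grow as $\epsilon\to0$, which the paper's proof avoids. Note also that the $\sigma^{-1/2}$ term you must beat with a large penalty is unavoidable for $\lambda\neq-1$, since the scheme is then not adjoint-consistent; only for $\lambda=-1$ could the duality step be made $O(h)$-accurate, and even then the compounded $\epsilon$-powers persist.
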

	\begin{proof}
		For $\Theta \in \V + \V_h,$ Lemma \ref{2.3.31}, \eqref{eq:2.1.5.4} and \eqref{eq:2.1.5.3} yield $B_{\dg}(\Psi,\Psi,\Theta,\cdot) $,  $B(\Psi,\Psi,\Theta,\cdot), C_{\dg}(\Theta,\cdot)$ and $ C(\Theta,\cdot) \in \mathbf{L}^2(\Omega)$. Therefore, for a given $\Theta_{\dg} \in \V_h$ with $\vertiiidg{\Theta_{\dg}}=1$, there exist $\boldsymbol{\xi}_\epsilon$ and $\boldsymbol{ \eta}_\epsilon \in \mathbf{H}^2(\Omega)\cap \V $ that solve the linear systems 
		\begin{subequations}
			\begin{align}
			A(\boldsymbol \xi_\epsilon,\Phi)=3B_{\rm dG}(\Psi,\Psi,\Theta_{\rm dG},\Phi)+C_{\rm dG}(\Theta_{\rm dG},\Phi) \,\, \text{ for all } \Phi \in \V  \,\,\, \text{and}\label{2.4.2.1}\\
			A(\boldsymbol \eta_\epsilon,\Phi)=3B(\Psi,\Psi,{\rm E}_h\Theta_{\dg},\Phi)+C({\rm E}_h\Theta_{\dg},\Phi)  \,\, \text{ for all }  \Phi \in \V. \,\,\,\,\,\,\,\,\label{2.4.3.1}
			\end{align}	
		\end{subequations}	
		A use of Lemmas \ref{lem2.3.3}, \ref{2.3.31} and elliptic regularity leads to
		\begin{align}\label{2.4.4.0}
		\vertiii{\boldsymbol \xi_\epsilon}_2 \leq \vertiii{3B_{\rm dG}(\Psi,\Psi,\Theta_{\rm dG},\cdot)+C_{\rm dG}(\Theta_{\rm dG},\cdot) }_{\mathbf{L}^2}\lesssim \epsilon^{-2},
		\end{align}
		where the hidden constant in $"\lesssim"$ depends on $\vertiii{\Psi}_2$, $C_S$ and $C_P$.	
		Subtract \eqref{2.4.2.1} from \eqref{2.4.3.1}, choose $\Phi= \boldsymbol \eta_\epsilon-\boldsymbol \xi_\epsilon$ and use \eqref{2.3} and \eqref{eq:2.3.33} to obtain
		\begin{align}\label{2.4.4.1}
		C_{\alpha_0 }\vertiii{\boldsymbol \eta_\epsilon-\boldsymbol \xi_\epsilon}_1 \leq C_\epsilon( 3\vertiii{\Psi}_2^2+1)\vertiii{\Theta_{\dg}-{\rm E}_h\Theta_{\dg}}_0 , 
		\end{align}
		A use of Lemma \ref{2.3.35} in \eqref{2.4.4.1} yields 
		\begin{align}
		\vertiii{\boldsymbol \eta_\epsilon-\boldsymbol \xi_\epsilon}_1 \lesssim h\epsilon^{-2} , \label{2.4.4.2}
		\end{align}
		where the constant hidden in  $ "\lesssim" $ depends on $\vertiii{\Psi}_2, \alpha_0$ and $C_{en_1}$. 
		Since $\Psi$ is regular solution of \eqref{eq:2.1.5}, a use of \eqref{2.9} yields that there exists $\Phi \in \V$ with $\vertiii{\Phi}_1 =1$ such that
		\begin{align*}
		\beta \vertiii{\E_h \Theta_{\dg}}_1\leq & \dual{{\rm DN}(\Psi)\E_h \Theta_{\dg},\Phi}=A(\E_h \Theta_{\dg},\Phi)+3B(\Psi,\Psi,\E_h \Theta_{\dg},\Phi)+C(\E_h \Theta_{\dg},\Phi). 
		\end{align*}  
		A use of \eqref{2.4.3.1}, \eqref{2.3}, an introduction of intermediate terms 
		and triangle inequality leads to 
		\begin{align}
		\beta \vertiii{\E_h \Theta_{\dg}}_1 & \leq A(\E_h \Theta_{\dg} + \boldsymbol \eta_\epsilon,\Phi)\leq \vertiiidg{\E_h \Theta_{\dg} + \boldsymbol \eta_\epsilon}   \notag \\& \leq \vertiiidg{\E_h \Theta_{\dg} -\Theta_{\dg}}+ \vertiiidg{\Theta_{\dg} + \textrm{I}_\dg\boldsymbol \xi_\epsilon} +\vertiiidg{\textrm{I}_\dg\boldsymbol \xi_\epsilon -\boldsymbol \xi_\epsilon}     +  \vertiii{{\boldsymbol \xi_\epsilon}-{\boldsymbol \eta_\epsilon}}_1. \label{2.4.5.1}
		\end{align}	
		Since $\displaystyle [\boldsymbol \xi_\epsilon]=0$ on $\mathcal{E}_i$ and $\boldsymbol \xi_\epsilon=0$ on $\mathcal{E}_D$, a use of second inequality in Lemma \ref{2.3.35} and the triangle inequality yields
		\begin{align}\label{2.4.5.2}
		\vertiiidg{\E_h \Theta_{\dg} -\Theta_{\dg}} \leq C_{en_2} \vertiiidg{\Theta_{\dg}+\boldsymbol \xi_\epsilon} \leq C_{en_2} (\vertiiidg{\Theta_{\dg}+\textrm{I}_\dg\boldsymbol \xi_\epsilon}+ \vertiiidg{\boldsymbol \xi_\epsilon-\textrm{I}_\dg\boldsymbol \xi_\epsilon } ).
		\end{align} 
 		 Since $\Theta_{\dg} + \textrm{I}_\dg \boldsymbol \xi_\epsilon \in \V_h,$ Lemma \ref{2.3.17} implies that there exists $ \Phi^\epsilon_{\dg} \in \V_h$ with $\vertiiidg{\Phi^\epsilon_{\dg} }=1$ such that
		\begin{align}
		\alpha \vertiiidg{\Theta_{\dg} + \textrm{I}_\dg\boldsymbol \xi_\epsilon} &\leq A_\dg(\Theta_{\dg} + \textrm{I}_\dg\boldsymbol \xi_\epsilon,\Phi_{\dg}^\epsilon) \notag\\ & \leq \dual{{\rm DN}_h(\Psi)\Theta_{\dg},\Phi_{\dg}^\epsilon } + 3B_\dg(\Psi,\Psi,\Theta_{\dg},\E_h\Phi_{\dg}^\epsilon-\Phi_{\dg}^\epsilon)+ C_\dg(\Theta_{\dg},\E_h\Phi_{\dg}^\epsilon-\Phi_{\dg}^\epsilon) \notag\\& \quad+ A_\dg(\textrm{I}_\dg\boldsymbol \xi_\epsilon -\boldsymbol \xi_\epsilon,\Phi_{\dg}^\epsilon)+A_\dg(\boldsymbol \xi_\epsilon,\Phi_{\dg}^\epsilon-\E_h\Phi_{\dg}^\epsilon).\label{2.4.8.1} 
		\end{align}
		A use of  Lemmas \ref{2.3.31} and \ref{2.3.35} yields the estimates for the second and third terms in \eqref{2.4.8.1} as
		\begin{align}\label{4.7.1}
		3B_\dg(\Psi,\Psi,\Theta_{\dg},\E_h\Phi_{\dg}^\epsilon-\Phi_{\dg}^\epsilon)+ C_\dg(\Theta_{\dg},\E_h\Phi_{\dg}^\epsilon-\Phi_{\dg}^\epsilon) \lesssim  \epsilon^{-2}\vertiii{\E_h \Phi_{\dg}^\epsilon - \Phi_{\dg}^\epsilon}_0 \lesssim  h\epsilon^{-2}.
		\end{align}
		A use of Lemmas  \ref{2.3.17}, \ref{2.3.35.1} and \ref{2.3.35} leads to
		\begin{align}\label{4.7.2}
		A_\dg(\textrm{I}_\dg\boldsymbol \xi_\epsilon -\boldsymbol \xi_\epsilon,\Phi_{\dg}^\epsilon)+A_\dg(\boldsymbol \xi_\epsilon,\Phi_{\dg}^\epsilon-\E_h\Phi_{\dg}^\epsilon) 
		\lesssim  {h \vertiii{\boldsymbol \xi_\epsilon}_2} +  \vertiii{\boldsymbol \xi_\epsilon}_2\vertiii{\E_h \Phi_{\dg}^\epsilon- \Phi_{\dg}^\epsilon}_0
		\lesssim h \vertiii{\boldsymbol \xi_\epsilon}_2 .
		\end{align}
		A combination of \eqref{4.7.1} and \eqref{4.7.2} in \eqref{2.4.8.1} leads to
		\begin{align}\label{4.9}
		\alpha \vertiiidg{\Theta_{\dg} + \textrm{I}_\dg\boldsymbol \xi_\epsilon}
		\lesssim \dual{{\rm DN}_h(\Psi)\Theta_{\dg},\Phi_{\dg}^\epsilon }+ h \vertiii{\boldsymbol \xi_\epsilon}_2 +  h\epsilon^{-2},
		\end{align}
		where $"\lesssim"$ includes a constant that depends on $\vertiii{\Psi}_2, C_A, C_S,C_P, C_{en_1} $ and $C_I.$ A substitution of \eqref{4.9} in \eqref{2.4.5.2} and a use of Lemma \ref{2.3.35.1} yields
		\begin{align}\label{4.14}
			\vertiiidg{\E_h \Theta_{\dg} -\Theta_{\dg}} \lesssim \dual{{\rm DN}_h(\Psi)\Theta_{\dg},\Phi_{\dg}^\epsilon }+ h \vertiii{\boldsymbol \xi_\epsilon}_2 +  h\epsilon^{-2} .
		\end{align}	 			
		Moreover, a use of \eqref{4.9}, \eqref{4.14}, Lemma \ref{2.3.35.1} and \eqref{2.4.4.2} in \eqref{2.4.5.1} yields
		\begin{align}\label{4.15}
		\vertiii{\E_h \Theta_{\dg}}_1 \lesssim  \dual{{\rm DN}_h(\Psi)\Theta_{\dg},\Phi_{\dg}^\epsilon }+ h \vertiii{\boldsymbol \xi_\epsilon}_2 +  h\epsilon^{-2} .
		\end{align}
		A use of triangle inequality, \eqref{4.14}, \eqref{4.15} and \eqref{2.4.4.0} leads to
		\begin{align} \label{2.4.7.1}
		1=\vertiiidg{\Theta_{\dg}} \leq \vertiiidg{ \Theta_{\dg}-\E_h \Theta_{\dg}} + \vertiii{\E_h \Theta_{\dg}}_1  \leq C_1(\dual{{\rm DN}_h(\Psi)\Theta_{\dg},\Phi_{\dg}^\epsilon }+ h\epsilon^{-2}),  
		\end{align}	
		where the constant $C_1$ is independent of $h$ and $\epsilon$.
		Therefore, for a given $\epsilon$, the discrete inf-sup condition holds with $\beta_0= \frac{1}{2C_1}$ for $h < h_0:=\frac{\epsilon^2}{2C_1}  $.
	\end{proof}
	We use the perturbed bilinear form defined as   $\dual{{\rm DN}_h(\textrm{I}_\dg\Psi)\Theta_{\rm dG}, \Phi_{\rm dG}} := A_{\rm dG}(\Theta_{\rm dG},\Phi_{\rm dG})+3B_{\rm dG}(\textrm{I}_\dg\Psi,\textrm{I}_\dg\Psi,\Theta_{\rm dG},\Phi_{\rm dG})+C_{\rm dG}(\Theta_{\rm dG},\Phi_{\rm dG}) $ for all $\Theta_{\dg}, \Phi_{\dg} \in \V_h$ in our analysis and  Newton's algorithm in our manuscript.   
	The next lemma establishes discrete inf-sup condition for the perturbed bilinear form.
	\begin{lem} \label{2.4.12} \emph{(Stability of perturbed bilinear form).}
		Let $\Psi$ be a regular solution of
		\eqref{eq:2.1.5} and ${{\rm I}_{\rm dG}}\Psi$ be the interpolation of  $\Psi$ from Lemma \ref{2.3.35.1}. For a given fixed $\epsilon>0,$ a sufficiently large $\sigma$ and a sufficiently small discretization parameter chosen as $h=O(\epsilon^{2} )$, the perturbed bilinear form $\dual{{\rm DN}_h({\rm I}_{\rm dG}\Psi)\cdot, \cdot}$
	    satisfies the following discrete inf-sup condition 
		\begin{align*}
		0< \frac{\beta_0}{2} \leq \inf_{\substack{\Theta_{\rm dG} \in \V_h \\  \vertiii{\Theta_{\rm dG}}_{\rm dG}=1}} \sup_{\substack{\Phi_{\rm dG} \in \V_h \\  \vertiii{\Phi_{\rm dG}}_{\rm dG}=1}}\dual{{\rm DN}_h({{\rm I}_{\rm dG}\Psi})\Theta_{\rm dG},\Phi_{\rm dG} }.
		\end{align*}
	\end{lem}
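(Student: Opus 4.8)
The plan is to derive the inf-sup bound for the perturbed form $\dual{{\rm DN}_h({\rm I}_{\rm dG}\Psi)\cdot,\cdot}$ as a perturbation of the already-established inf-sup bound for $\dual{{\rm DN}_h(\Psi)\cdot,\cdot}$ from Theorem \ref{2.4.1.1}. The only difference between the two forms is in the cubic term: $3B_{\rm dG}(\Psi,\Psi,\cdot,\cdot)$ versus $3B_{\rm dG}({\rm I}_{\rm dG}\Psi,{\rm I}_{\rm dG}\Psi,\cdot,\cdot)$. So the essential idea is to show that the difference operator $\dual{({\rm DN}_h(\Psi)-{\rm DN}_h({\rm I}_{\rm dG}\Psi))\cdot,\cdot}$ has operator norm that is small, specifically $O(h)$ up to $\epsilon$-dependent constants, so that it can be absorbed into the gap between $\beta_0$ and $\beta_0/2$.

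First I would write, for arbitrary $\Theta_{\rm dG},\Phi_{\rm dG}\in\V_h$,
\begin{align*}
\dual{{\rm DN}_h(\Psi)\Theta_{\rm dG},\Phi_{\rm dG}}-\dual{{\rm DN}_h({\rm I}_{\rm dG}\Psi)\Theta_{\rm dG},\Phi_{\rm dG}}=3\bigl(B_{\rm dG}(\Psi,\Psi,\Theta_{\rm dG},\Phi_{\rm dG})-B_{\rm dG}({\rm I}_{\rm dG}\Psi,{\rm I}_{\rm dG}\Psi,\Theta_{\rm dG},\Phi_{\rm dG})\bigr),
\end{align*}
and then split the difference of the two cubic terms telescopically, introducing the intermediate term $B_{\rm dG}(\Psi,{\rm I}_{\rm dG}\Psi,\Theta_{\rm dG},\Phi_{\rm dG})$, so that each resulting summand has one argument equal to $\Psi-{\rm I}_{\rm dG}\Psi$. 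Applying the boundedness estimate \eqref{eq:2.3.32} from Lemma \ref{2.3.31} to each piece and using the interpolation estimate in Lemma \ref{2.3.35.1} (which gives $\vertiiidg{\Psi-{\rm I}_{\rm dG}\Psi}\lesssim h\vertiii{\Psi}_2$, together with $\vertiiidg{{\rm I}_{\rm dG}\Psi}\lesssim\vertiii{\Psi}_2$ by the triangle inequality), I obtain the bound
\begin{align*}
\bigl|\dual{({\rm DN}_h(\Psi)-{\rm DN}_h({\rm I}_{\rm dG}\Psi))\Theta_{\rm dG},\Phi_{\rm dG}}\bigr|\lesssim \epsilon^{-2}\,h\,\vertiiidg{\Theta_{\rm dG}}\vertiiidg{\Phi_{\rm dG}},
\end{align*}
where the hidden constant depends on $\vertiii{\Psi}_2$, $C_S$, $C_P$ and $C_\epsilon$.

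Now I would combine this with Theorem \ref{2.4.1.1}. Given $\Theta_{\rm dG}$ with $\vertiiidg{\Theta_{\rm dG}}=1$, choose $\Phi_{\rm dG}$ with $\vertiiidg{\Phi_{\rm dG}}=1$ nearly attaining the supremum for $\dual{{\rm DN}_h(\Psi)\cdot,\cdot}$, so that $\dual{{\rm DN}_h(\Psi)\Theta_{\rm dG},\Phi_{\rm dG}}\geq\beta_0$. Then
\begin{align*}
\dual{{\rm DN}_h({\rm I}_{\rm dG}\Psi)\Theta_{\rm dG},\Phi_{\rm dG}}\geq\beta_0-C\epsilon^{-2}h,
\end{align*}
and the supremum of the perturbed form over unit $\Phi_{\rm dG}$ is at least this quantity. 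Since the discretization parameter is chosen as $h=O(\epsilon^2)$, the perturbation term $C\epsilon^{-2}h$ is $O(1)$ in the worst case; to guarantee it is bounded by $\beta_0/2$ I would strengthen the threshold, requiring $h\leq h_1:=\min\{h_0,\,\beta_0\epsilon^{2}/(2C)\}$, which is still of the form $h=O(\epsilon^2)$ for the fixed $\epsilon$. With this choice the perturbation is absorbed and the inf-sup constant $\beta_0/2$ follows. The main obstacle is the bookkeeping of the $\epsilon$-dependence: one must verify that the $\epsilon^{-2}$ factor multiplying $h$ is reconciled against the $O(\epsilon^{2})$ smallness of $h$ and the fixed-$\epsilon$ value of $\beta_0$, so that the restriction on $h$ remains consistent with the hypothesis $h=O(\epsilon^{2})$ rather than forcing a more stringent scaling.
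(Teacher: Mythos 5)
Your proposal is correct and takes essentially the same approach as the paper: it too treats $\dual{{\rm DN}_h({\rm I}_{\rm dG}\Psi)\cdot,\cdot}$ as a perturbation of $\dual{{\rm DN}_h(\Psi)\cdot,\cdot}$, bounds the difference by $O(h\epsilon^{-2})$ using Lemmas \ref{2.3.31} and \ref{2.3.35.1}, and absorbs this into the inf-sup constant of Theorem \ref{2.4.1.1} by restricting $h< \frac{\beta_0}{2C_2}\epsilon^{2}$, which is precisely your threshold. The only cosmetic difference is that the paper expands $B_{\rm dG}(\Psi-\tilde{\Psi},\Psi-\tilde{\Psi},\Theta_{\rm dG},\Phi_{\rm dG})$ with $\tilde{\Psi}=\Psi-{\rm I}_{\rm dG}\Psi$ directly, keeping the quadratic remainder $B_{\rm dG}(\tilde{\Psi},\tilde{\Psi},\cdot,\cdot)$ and cross terms, rather than using your telescoping split with the intermediate term $B_{\rm dG}(\Psi,{\rm I}_{\rm dG}\Psi,\cdot,\cdot)$.
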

	\begin{proof}
		For $\tilde{\Psi}=\Psi- {\rm I}_{\dg}\Psi$, 
		\begin{align*}
		\dual{{\rm DN}_h({{\rm I}_{\rm dG}\Psi})\Theta_{\rm dG},\Phi_{\rm dG} } 	= A_\dg(\Theta_{\dg},\Phi_{\dg})+3B_\dg(\Psi- \tilde{\Psi},\Psi- \tilde{\Psi},\Theta_{\dg},\Phi_{\dg})+C_\dg(\Theta_{\dg},\Phi_{\dg}). 
		\end{align*}
		A use of \eqref{3.2} and Remark \ref{4.5} lead to
		\begin{align*} 
		&B_\dg(\Psi- \tilde{\Psi},\Psi- \tilde{\Psi},\Theta_{\dg},\Phi_{\dg})=  B_\dg(\Psi,\Psi,\Theta_{\dg},\Phi_{\dg}) +B_\dg(\tilde{\Psi}, \tilde{\Psi},\Theta_{\dg},\Phi_{\dg})\\ &\quad - \frac{4 }{3}\epsilon^{-2} \sum_{ T \in \mathcal{T}}\int_T ((\Psi \cdot \tilde{\Psi})(\Theta_{\dg} \cdot \Phi_{\dg})+(\tilde{\Psi}\cdot \Theta_{\dg})(\Psi \cdot \Phi_{\dg})+(\Psi \cdot \Theta_{\dg})(\tilde{\Psi} \cdot \Phi_{\dg})) \dx \\
		&\geq  B_\dg(\Psi,\Psi,\Theta_{\dg},\Phi_{\dg}) +B_\dg(\tilde{\Psi}, \tilde{\Psi},\Theta_{\dg},\Phi_{\dg}) - 4C_P\epsilon^{-2} \tilde{\vertiii{\Psi}}_\dg \vertiiidg{\Psi} \vertiiidg{\Theta_{\dg}} \vertiiidg{\Phi_{\dg}}.
		\end{align*}
		\begin{align*}
		\text{Therefore}, \, \dual{{\rm DN}_h({{\rm I}_{\rm dG}\Psi})\Theta_{\rm dG},\Phi_{\rm dG} } \geq & \dual{{\rm DN}_h(\Psi)\Theta_{\rm dG},\Phi_{\rm dG} } + 3B_\dg( \tilde{\Psi},\tilde{\Psi},\Theta_{\dg},\Phi_{\dg})\quad\quad\quad\quad\quad\quad\quad\quad\quad\quad\quad \\&- 12C_PC_\epsilon \tilde{\vertiii{\Psi}}_\dg \vertiiidg{\Psi} \vertiiidg{\Theta_{\dg}} \vertiiidg{\Phi_{\dg}}. 
		\end{align*}
		A use of Theorem \ref{2.4.1.1}, \eqref{eq:2.3.32} and Lemma \ref{2.3.35.1} leads to
		\begin{align*}
		\sup_{\vertiiidg{\Phi_{\dg}}=1}\dual{{\rm DN}_h({{\rm I}_{\rm dG}\Psi})\Theta_{\rm dG},\Phi_{\rm dG} } \geq & \sup_{\vertiiidg{\Phi_{\dg}}=1}\dual{{\rm DN}_h(\Psi)\Theta_{\rm dG},\Phi_{\rm dG} }  - C_2(h  + 1  ) h \epsilon^{-2} \vertiiidg{\Theta_{\dg}}\\
		\geq & (\beta_0 -  C_2h\epsilon^{-2})\vertiiidg{\Theta_{\dg}},
		\end{align*}
		where the constant $C_2$ depends on $\vertiii{\Psi}_2$, $C_P$, $C_S$, and $ C_I$.
	Hence, for a sufficiently small choice of $h< h_2:=\min(h_0,h_1)$ with $h_1:=\frac{\beta_0}{2C_2}\epsilon^2$ , the required result holds for $\frac{\beta_0}{2}.$ 
	\end{proof}
	\section{Proof of main results} \label{main result}
	The proofs of main results use some preliminary results that are established first.   
		First  the non-linear map $\mu^\epsilon_{\dg}$ is defined on the discrete space $\V_h$ and in Theorem \ref{thm2.5.1} it is established that $\mu^\epsilon_{\dg}$ maps a closed convex set to itself and the fixed point of $\mu^\epsilon_{\dg}$ is a solution of the discrete non-linear problem in \eqref{2.3.13} and vice-versa. 
	For $\Phi_{\dg} \in \V_h$, define the map, $\mu^\epsilon_{\dg}:\V_h \rightarrow \V_h$ by
	\begin{align}\label{2.5.1}
	\dual{{\rm DN}_h({{\rm I}_{\rm dG}\Psi}) \mu^\epsilon_{\dg}(\Theta_{\dg}),\Phi_{\rm dG} }
	= 3B_\dg(\textrm{I}_{\dg}\Psi, \textrm{I}_{\dg}\Psi,\Theta_{\dg},\Phi_{\dg}) - B_\dg(\Theta_{\dg},\Theta_{\dg}, \Theta_{\dg},\Phi_{\dg}) +L_\dg(\Phi_{\dg}).
	\end{align}
	The map $\mu^\epsilon_{\dg}$ is well-defined and this follows from the inf-sup conditions of  $\dual{{\rm DN}_h({{\rm I}_{\rm dG}\Psi}) \cdot,\cdot }$ in Lemma \ref{2.4.12}. We use the abbreviation $\mu_{\dg}:=\mu^\epsilon_{\dg}$ in the rest of the article for notational convenience.
	Let $\mathbb{B}_R(\textrm{I}_{\dg}\Psi):= \{\Phi_{\dg} \in \V_h: \vertiiidg{\textrm{I}_{\dg}\Psi-\Phi_{\dg}} \leq R\}.$
	The existence and uniqueness result of the discrete solution $\Psi_{\dg}$ in Theorem  \ref{2.5.2} is an application of Brouwer's fixed point theorem. Then, the energy norm error estimate is presented. As an alternative approach, the existence and uniqueness of discrete solution is established using Newton-Kantorovich theorem and is followed by the best approximation result. 
	\begin{thm}[Mapping of ball to ball] \label{thm2.5.1}
		Let $\Psi$ be a regular solution of the non-linear system
		\eqref{eq:2.1.5}. For a given fixed $\epsilon>0,$ a sufficiently large $\sigma$ and a sufficiently small discretization parameter chosen as $h=O(\epsilon^{2+\alpha} )$ with $\alpha > 0$, there exists a positive constant $R(h)$ such that $\mu_{\rm dG}$ maps the ball $\mathbb{B}_{R(h)}({\rm I}_{\rm dG}\Psi)$ to itself; 
		\begin{align*}
		\vertiii{\Theta_{\rm dG} - {\rm I}_{\rm dG}\Psi }_{\rm dG} \leq R(h) \implies \vertiii{\mu_{\rm dG}(\Theta_{\rm dG}) - {\rm I}_{\rm dG}\Psi}_{\rm dG} \leq R(h) \text{ for all } \Theta_{\rm dG} \in \V_{h} .	
		\end{align*}
	\end{thm}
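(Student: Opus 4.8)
The plan is to estimate $\vertiiidg{\mu_{\dg}(\Theta_{\dg})-\textrm{I}_{\dg}\Psi}$ by testing against the perturbed bilinear form and invoking its discrete inf-sup stability from Lemma~\ref{2.4.12}. Since $\mu_{\dg}(\Theta_{\dg})-\textrm{I}_{\dg}\Psi\in\V_h$, Lemma~\ref{2.4.12} gives $\frac{\beta_0}{2}\vertiiidg{\mu_{\dg}(\Theta_{\dg})-\textrm{I}_{\dg}\Psi}\le\sup_{\vertiiidg{\Phi_{\dg}}=1}\dual{{\rm DN}_h(\textrm{I}_{\dg}\Psi)(\mu_{\dg}(\Theta_{\dg})-\textrm{I}_{\dg}\Psi),\Phi_{\dg}}$, so it suffices to bound the tested functional for an arbitrary $\Phi_{\dg}$ with $\vertiiidg{\Phi_{\dg}}=1$. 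First I would insert the defining identity \eqref{2.5.1} for $\mu_{\dg}(\Theta_{\dg})$, subtract $\dual{{\rm DN}_h(\textrm{I}_{\dg}\Psi)\textrm{I}_{\dg}\Psi,\Phi_{\dg}}=A_{\dg}(\textrm{I}_{\dg}\Psi,\Phi_{\dg})+3B_{\dg}(\textrm{I}_{\dg}\Psi,\textrm{I}_{\dg}\Psi,\textrm{I}_{\dg}\Psi,\Phi_{\dg})+C_{\dg}(\textrm{I}_{\dg}\Psi,\Phi_{\dg})$, and replace $L_{\dg}(\Phi_{\dg})$ using the consistency of the dG formulation at the regular solution, namely ${\rm N}_h(\Psi;\Phi_{\dg})=L_{\dg}(\Phi_{\dg})$ for all $\Phi_{\dg}\in\V_h$ (valid because $\Psi\in\h^2(\Omega)$). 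After cancellation this leaves a linear part $A_{\dg}(\Psi-\textrm{I}_{\dg}\Psi,\Phi_{\dg})+C_{\dg}(\Psi-\textrm{I}_{\dg}\Psi,\Phi_{\dg})$ and a purely cubic part built from $B_{\dg}$.

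Next I would simplify the cubic part. Writing $W:=\textrm{I}_{\dg}\Psi$, $\eta:=\Psi-W$ and $e:=\Theta_{\dg}-W$, and using the trilinearity of $B_{\dg}(\cdot,\cdot,\cdot,\Phi_{\dg})$ in its first three arguments together with the linearisation identity underlying the factor $3$ in $\dual{{\rm DN}_h\cdot,\cdot}$ (the directional derivative of $B_{\dg}(W,W,W,\Phi_{\dg})$ along $e$ equals $3B_{\dg}(W,W,e,\Phi_{\dg})$), the combination $3B_{\dg}(W,W,\Theta_{\dg},\Phi_{\dg})-B_{\dg}(\Theta_{\dg},\Theta_{\dg},\Theta_{\dg},\Phi_{\dg})$ collapses to $2B_{\dg}(W,W,W,\Phi_{\dg})$ minus the quadratic- and cubic-in-$e$ remainders. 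Together with the leftover $B_{\dg}(\Psi,\Psi,\Psi,\Phi_{\dg})-3B_{\dg}(W,W,W,\Phi_{\dg})$, the entire cubic part reduces to the interpolation difference $B_{\dg}(\Psi,\Psi,\Psi,\Phi_{\dg})-B_{\dg}(W,W,W,\Phi_{\dg})$ minus terms quadratic and cubic in $e$.

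I would then estimate term by term. For the linear part, boundedness of $A_{\dg}$ (Lemma~\ref{2.3.17}) and the $H^1$ interpolation bound give $A_{\dg}(\eta,\Phi_{\dg})\lesssim\vertiiidg{\eta}\lesssim h$, whereas for $C_{\dg}$ I would use the $\mathbf{L}^2$ interpolation estimate $\vertiii{\eta}_0\lesssim h^2\vertiii{\Psi}_2$ (Lemma~\ref{2.3.35.1} with $l=0$) to obtain $C_{\dg}(\eta,\Phi_{\dg})\lesssim\epsilon^{-2}h^2$. For the interpolation difference I would telescope it into three terms each carrying a single factor $\eta$, and bound each by placing the remaining ($\Psi$- or $W$-) factors in $\mathbf{L}^\infty$/$\mathbf{L}^4$ (via \eqref{eq:2.3.33} of Lemma~\ref{2.3.31} or Hölder) and $\eta$ in $\mathbf{L}^2$; with $\vertiii{\eta}_0\lesssim h^2$ this again yields $\lesssim\epsilon^{-2}h^2$. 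The remaining quadratic and cubic terms in $e$ are controlled by the $\mathbf{L}^4$ form of the $B_{\dg}$-bound, giving $\lesssim\epsilon^{-2}(R^2+R^3)$ whenever $\vertiiidg{e}\le R$. A point requiring care is that, for non-homogeneous $\mathbf{g}$, the boundary penalty makes $\vertiiidg{W}$ grow like $h^{-1/2}$, so the $W$-factors must be measured in $\mathbf{L}^4$/$\mathbf{L}^\infty$ (uniformly bounded by $\vertiii{\Psi}_2$) rather than in the dG norm. Collecting the estimates gives $\vertiiidg{\mu_{\dg}(\Theta_{\dg})-W}\lesssim\beta_0^{-1}\big(h+\epsilon^{-2}h^2+\epsilon^{-2}(R^2+R^3)\big)$.

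Finally I would close the self-mapping by setting $R=R(h)$ of order $h$. Under the hypothesis $h=O(\epsilon^{2+\alpha})$ one has $\epsilon^{-2}h^2\lesssim\epsilon^{\alpha}h\lesssim h$ for $\epsilon\le1$, so the consistency error is $\lesssim h$; with $R\sim h$ the self-interaction satisfies $\epsilon^{-2}R^2\lesssim\epsilon^{-2}h^2\lesssim\epsilon^{\alpha}h$, which for sufficiently small $h$ is reabsorbed into the budget allotted to $R$, establishing $\vertiiidg{\mu_{\dg}(\Theta_{\dg})-\textrm{I}_{\dg}\Psi}\le R(h)$. I expect the decisive difficulty to be the bookkeeping of $\epsilon$-powers rather than any single inequality: the nonlinear self-interaction enters with the weight $\epsilon^{-2}$, and it is exactly the stronger scaling $h=O(\epsilon^{2+\alpha})$ with $\alpha>0$ (sharper than the $O(\epsilon^2)$ needed for the inf-sup in Lemma~\ref{2.4.12}) that forces $\epsilon^{-2}h\to0$ and renders the quadratic-in-$R$ term strictly subordinate to the linear consistency term; keeping that leading consistency error at $O(h)$ with an $\epsilon$-independent constant in turn relies on exploiting the sharper $\mathbf{L}^2$ interpolation order $h^2$ for the reaction and cubic-difference contributions.
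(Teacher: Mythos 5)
Your proposal is correct and follows essentially the same route as the paper's proof: the discrete inf-sup condition for the perturbed form $\dual{{\rm DN}_h(\textrm{I}_{\dg}\Psi)\cdot,\cdot}$ from Lemma \ref{2.4.12}, consistency of the exact solution in the dG formulation \eqref{2.5.3}, the identical four-term decomposition (linear $A_{\dg}$- and $C_{\dg}$-interpolation errors, the cubic interpolation difference, and the quadratic/cubic-in-$\tilde{\mathbf{e}}$ remainder) with the same bounds $h$, $\epsilon^{-2}h^2$, $\epsilon^{-2}h^2$ and $\epsilon^{-2}R^2(R+1)$, closed by choosing $R(h)\sim h$ and exploiting $h\epsilon^{-2}\leq h^{\frac{\alpha}{2+\alpha}}$. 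Your one deviation---measuring the $\textrm{I}_{\dg}\Psi$-factors in $\mathbf{L}^4/\mathbf{L}^\infty$ (via interpolation stability) instead of in the dG norm---is a refinement rather than a different argument, and it is in fact the more careful bookkeeping: the paper's step $\vertiii{\textrm{I}_{\dg}\Psi}_{\dg}\lesssim\vertiii{\Psi}_2$ is delicate for non-homogeneous $\mathbf{g}$, since the boundary penalty contributes $O(h^{-1/2})$ to $\vertiii{\textrm{I}_{\dg}\Psi}_{\dg}$, exactly as you observe.
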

	\begin{proof}
		The solution of \eqref{eq:2.1.5} that belongs to $\Psi \in \h^2(\Omega)\cap \V$ satisfies 
		the dG formulation
		\begin{align}\label{2.5.3}
		A_{\dg}(\Psi,\Phi_{\dg})+B_{\dg}(\Psi,\Psi,\Psi,\Phi_{\dg})+C_{\dg}(\Psi,\Phi_{\dg})=L_\dg(\Phi_{\dg}) \,\,\,\,\text{ for all } \Phi_{\dg} \in \V_h.
		\end{align}
		A use of the definition and linearity of $\dual{{\rm DN}_h({{\rm I}_{\rm dG}\Psi})\cdot, \cdot }$, \eqref{2.5.1} and \eqref{2.5.3} leads to
		\begin{align}\label{2.5.4}
		&\dual{{\rm DN}_h({{\rm I}_{\rm dG}\Psi}) (\textrm{I}_{\dg}\Psi -\mu_{\rm dG}(\Theta_{\dg})),\Phi_{\rm dG} } =\dual{{\rm DN}_h({{\rm I}_{\rm dG}\Psi}) \textrm{I}_{\dg}\Psi ,\Phi_{\rm dG} } -\dual{{\rm DN}_h({{\rm I}_{\rm dG}\Psi}) \mu_{\rm dG}(\Theta_{\dg}),\Phi_{\rm dG} } \notag\\&
		=A_{\dg}(\textrm{I}_{\dg}\Psi , \Phi_{\dg} ) + 3B_{\dg}(\textrm{I}_{\dg}\Psi,\textrm{I}_{\dg}\Psi,\textrm{I}_{\dg}\Psi , \Phi_{\dg} )+ C_{\dg}(\textrm{I}_{\dg}\Psi , \Phi_{\dg} ) -3B_{\dg}(\textrm{I}_{\dg}\Psi,\textrm{I}_{\dg}\Psi, \Theta_{\dg},\Phi_{\dg})\notag\\ & \quad+ B_{\dg}(\Theta_{\dg},\Theta_{\dg}, \Theta_{\dg},\Phi_{\dg}) -L_{\rm dG}(\Phi_{\dg})\notag\\&
		=A_{\dg}(\textrm{I}_{\dg}\Psi -\Psi , \Phi_{\dg} )+C_{\dg}(\textrm{I}_{\dg}\Psi -\Psi , \Phi_{\dg} ) +(B_{\dg}(\textrm{I}_{\dg}\Psi,\textrm{I}_{\dg}\Psi,\textrm{I}_{\dg}\Psi , \Phi_{\dg} ) -B_{\dg}(\Psi,\Psi, \Psi,\Phi_{\dg}))\notag\\
		& \quad + (2B_{\dg}(\textrm{I}_{\dg}\Psi,\textrm{I}_{\dg}\Psi,\textrm{I}_{\dg}\Psi , \Phi_{\dg} ) -3B_{\dg}(\textrm{I}_{\dg}\Psi,\textrm{I}_{\dg}\Psi, \Theta_{\dg},\Phi_{\dg}) + B_{\dg}(\Theta_{\dg},\Theta_{\dg}, \Theta_{\dg},\Phi_{\dg}))\notag\\&=: T_1+T_2+T_3+T_4. 
		\end{align}
		A use of Lemmas \ref{2.3.17}, \ref{2.3.31} and \ref{2.3.35.1} yields
		\begin{align*}
		&T_1:=A_{\dg}(\textrm{I}_{\dg}\Psi -\Psi , \Phi_{\dg} )  \lesssim h \vertiiidg{\Phi_{\dg} }.
		\\&
		T_2:=C_{\dg}(\textrm{I}_{\dg}\Psi -\Psi , \Phi_{\dg} ) \lesssim \epsilon^{-2} \vertiii{\textrm{I}_{\dg}\Psi -\Psi }_0 \vertiiidg{\Phi_{\dg} } \lesssim \epsilon^{-2} h^2  \vertiiidg{\Phi_{\dg} }.
		\end{align*}
		A rearrangement of the terms in $B_{\rm dG}(\cdot,\cdot,\cdot,\cdot)$ and a use Lemmas \ref{2.3.31} and \ref{2.3.35.1} lead to
		\begin{align*}
		&T_3:=B_{\dg}(\textrm{I}_{\dg}\Psi,\textrm{I}_{\dg}\Psi,\textrm{I}_{\dg}\Psi , \Phi_{\dg} ) -B_{\dg}(\Psi,\Psi, \Psi,\Phi_{\dg})\notag\\&=B_{\dg}(\textrm{I}_{\dg}\Psi-\Psi,\textrm{I}_{\dg}\Psi-\Psi, \textrm{I}_{\dg}\Psi, \Phi_{\dg} )+2B_{\dg}(\textrm{I}_{\dg}\Psi-\Psi,\textrm{I}_{\dg}\Psi-\Psi, \Psi,\Phi_{\dg}) +3B_{\dg}( \Psi,\Psi,\textrm{I}_{\dg}\Psi-\Psi,\Phi_{\dg})
		\\ &\lesssim  \epsilon^{-2} (\vertiiidg{\textrm{I}_{\dg}\Psi-\Psi}^2\vertiiidg{\textrm{I}_{\dg}\Psi}+\vertiiidg{\textrm{I}_{\dg}\Psi-\Psi}^2\vertiii{\Psi}_2+\vertiii{\Psi}_2^2\vertiii{\textrm{I}_{\dg}\Psi-\Psi}_0)\vertiiidg{\Phi_{\dg}}\lesssim \epsilon^{-2} h^2 \vertiiidg{\Phi_{\dg}}.
		\end{align*}
		 Set $\tilde{\mathbf{e}}=\Theta_{\dg}-\textrm{I}_{\dg}\Psi$ and use definition of $ B_{\dg}(\cdot,\cdot, \cdot,\cdot)$, a regrouping of terms and Remark \ref{4.5} to obtain
		\begin{align}
		T_4: &=2B_{\dg}(\textrm{I}_{\dg}\Psi,\textrm{I}_{\dg}\Psi,\textrm{I}_{\dg}\Psi , \Phi_{\dg} ) -3B_{\dg}(\textrm{I}_{\dg}\Psi,\textrm{I}_{\dg}\Psi, \Theta_{\dg},\Phi_{\dg}) + B_{\dg}(\Theta_{\dg},\Theta_{\dg}, \Theta_{\dg},\Phi_{\dg}) \notag
		\\& 
		=C_\epsilon \sum_{ T \in \mathcal{T}}\int_T(\abs{\Theta_{\dg}}^2-\abs{\textrm{I}_{\dg}\Psi}^2)(\Theta_{\dg} \cdot\Phi_{\dg})\dx+ 2 C_\epsilon \sum_{ T \in \mathcal{T}}\int_T (\textrm{I}_{\dg}\Psi - \Theta_{\dg})\cdot\textrm{I}_{\dg}\Psi(\textrm{I}_{\dg}\Psi \cdot \Phi_{\dg})\dx \notag\\& =   C_\epsilon \sum_{ T \in \mathcal{T}}\int_T
		\tilde{\mathbf{e}}\cdot(\tilde{\mathbf{e}}+2\textrm{I}_{\dg}\Psi)(\tilde{\mathbf{e}}+\textrm{I}_{\dg}\Psi) \cdot\Phi_{\dg}\dx-2C_\epsilon \sum_{ T \in \mathcal{T}}\int_T (\tilde{\mathbf{e}}\cdot\textrm{I}_{\dg}\Psi) (\textrm{I}_{\dg}\Psi\cdot \Phi_{\dg})\dx         \notag    \\&
		 =C_\epsilon \sum_{ T \in \mathcal{T}}\int_T (\tilde{\mathbf{e}}\cdot\tilde{\mathbf{e}}) (\tilde{\mathbf{e}}\cdot \Phi_{\dg})\dx
		+ 2C_\epsilon\sum_{ T \in \mathcal{T}}\int_T (\tilde{\mathbf{e}}\cdot \textrm{I}_{\dg}\Psi) (\tilde{\mathbf{e}}\cdot \Phi_{\dg})\dx+C_\epsilon\sum_{ T \in \mathcal{T}}\int_T (\tilde{\mathbf{e}}\cdot\tilde{\mathbf{e}}) (\textrm{I}_{\dg}\Psi\cdot \Phi_{\dg})\dx\notag\\
		&
		\lesssim \epsilon^{-2}\vertiiidg{\tilde{\mathbf{e}}}^2(\vertiiidg{\tilde{\mathbf{e}}}+\vertiii{\textrm{I}_{\dg}\Psi}_{\dg})\vertiiidg{\Phi_{\dg}}\notag.
		\end{align}
		The hidden constants in $"\lesssim"$ in the estimates of $T_1, T_2$, $T_3$ and $T_4$ depend on  $\vertiii{\Psi}_2$, $C_A,$ $C_P$, $C_S$, and $C_I$.
		A substitution of the estimates for $T_1$, $T_2$, $T_3$ and $T_4$ in \eqref{2.5.4} with a use of $\vertiii{{\rm I}_{\rm dG}\Psi}_{\rm dG}\lesssim \vertiii{\Psi}_2$ leads to
		\begin{align}\label{2.5.12}
		\dual{{\rm DN}_h({{\rm I}_{\rm dG}\Psi}) (\textrm{I}_{\dg}\Psi -\mu_{\rm dG}(\Theta_{\dg})),\Phi_{\rm dG} } \lesssim \big((h+\epsilon^{-2} h^2)+ \epsilon^{-2}\vertiiidg{\tilde{\mathbf{e}}}^2(\vertiiidg{\tilde{\mathbf{e}}}+1)\big) \vertiiidg{\Phi_{\dg}}.
		\end{align}
		A use of Lemma \ref{2.4.12} yields that there exists 
		a $\Phi_{\dg} \in \V_h$ with $\vertiiidg{\Phi_{\dg} }=1$ such that
		\begin{align}\label{2.5.13}
		\frac{\beta_0}{2}\vertiiidg{\textrm{I}_{\dg}\Psi -\mu_{\rm dG}(\Theta_{\dg})} \leq
		\dual{{\rm DN}_h({{\rm I}_{\rm dG}\Psi}) (\textrm{I}_{\dg}\Psi -\mu_{\rm dG}(\Theta_{\dg})),\Phi_{\rm dG} }. 
		\end{align}
		A use of \eqref{2.5.12} and 
		$\vertiiidg{\tilde{\mathbf{e}}}\leq R(h)$ in \eqref{2.5.13} leads to 
		\begin{align}\label{4.22}
		\vertiiidg{\textrm{I}_{\dg}\Psi -\mu_{\rm dG}(\Theta_{\dg})}\lesssim (h+\epsilon^{-2} h^2)+ \epsilon^{-2}\vertiiidg{\tilde{\mathbf{e}}}^2(\vertiiidg{\tilde{\mathbf{e}}}+1) \leq C_3(h+\epsilon^{-2} h^2+\epsilon^{-2}
		{R(h)}^2(R(h)+1)),
		\end{align} 
		where $C_3$ is a positive constant that depends on $\vertiii{\Psi}_2$, $\beta_0$, $C_A$, $C_S$, $C_P$ and $C_I$. 
		Assume $h\leq \epsilon^{2+\alpha}$ with $\alpha>0$ so that $ h \epsilon^{-2}\leq  h^{\frac{\alpha}{2+\alpha}}$. Choose $R(h)=2C_3h.$ 
		 For $h< h_{4}:=\min({h_2,h_3})
		$ with $h_{3}^{\frac{\alpha}{2+\alpha}}< \frac{1}{2(1+4C^2_3)^2}< \frac{1}{2}$, we obtain
		\begin{align*}
		\vertiiidg{\textrm{I}_{\dg}\Psi -\mu_{\rm dG}(\Theta_{\dg})}\leq & C_3h+C_3hh^{\frac{\alpha}{2+\alpha}}+ 4 C_3^3hh^{\frac{\alpha}{2+\alpha}}(2C_3h+1)  \\ \leq&   C_3h\left(1+h^{\frac{\alpha}{2+\alpha}}(1+4C_3^2)+8C_3^3hh^{\frac{\alpha}{2+\alpha}} \right)\leq { C_3h\left(1+\frac{1}{2}+\frac{1}{2}\frac{8C_3^3h}{(1+4C_3^2)^2}\right)} .
		\end{align*}
		Since $h<h_{3}< \frac{1}{2^{\frac{2+\alpha}{\alpha}}}<1$ and $\frac{8C_3^3}{(1+4C_3^2)^2}<1,$ $\vertiiidg{\textrm{I}_{\dg}\Psi -\mu_{\rm dG}(\Theta_{\dg})}\leq 2C_3h=R(h).$
		This completes the proof of Theorem \ref{thm2.5.1} .
	\end{proof}
\begin{rem}
	We derive error estimates with  $h$-$\epsilon$ dependency,  for a given fixed $\epsilon$. This provides a sufficient condition on the choice of the discretization parameter $h$ for a given fixed $\epsilon$ that ensures convergence. A large value of $\alpha$ would require a very small value of $h$. Equally, a very small  choice of $\alpha$ would require $h \to 0$ from the estimates above. In this respect, the choices $\alpha \ll 1$ and $\alpha \gg1$ lead to computationally expensive scenarios and we only focus on bounded $O(1)$ values of $\alpha$ in this manuscript.
\end{rem}
\begin{lem}[Contraction result]\label{4.3}
	 For a given fixed $\epsilon>0,$ a sufficiently large $\sigma$ and a sufficiently small discretization parameter chosen as $h=O(\epsilon^{2+\alpha} )$ with $\alpha > 0$, 
 the following contraction result holds: $\text{for } \Theta_1,  \Theta_2 \in \mathbb{B}_{R(h)}({\rm I}_{\rm dG}\Psi)$,
	\begin{align*} 
	\vertiii{\mu_{\rm dG}(\Theta_1)-\mu_{\rm dG}(\Theta_2)}_{\rm dG}\leq Ch^{\frac{\alpha}{2+\alpha}}\vertiii{\Theta_1-\Theta_2}_{\rm dG} 
	\end{align*}
	 with some positive constant $C$ independent of $h$ and $\epsilon$ .
\end{lem}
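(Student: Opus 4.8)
The plan is to subtract the two defining relations \eqref{2.5.1} for $\mu_{\dg}(\Theta_1)$ and $\mu_{\dg}(\Theta_2)$, exploit the linearity of the perturbed form $\dual{{\rm DN}_h({\rm I}_{\dg}\Psi)\cdot,\cdot}$ in its first slot, and then convert the resulting nonlinear discrepancy into a bound on $\vertiiidg{\mu_{\dg}(\Theta_1)-\mu_{\dg}(\Theta_2)}$ via the inf-sup stability of Lemma \ref{2.4.12}. Writing $G:={\rm I}_{\dg}\Psi$ and $\mathbf{e}:=\Theta_1-\Theta_2$, the load terms $L_{\dg}(\Phi_{\dg})$ cancel and one is left with
\[
\dual{{\rm DN}_h(G)(\mu_{\dg}(\Theta_1) - \mu_{\dg}(\Theta_2)), \Phi_{\dg}} = 3B_{\dg}(G,G,\mathbf{e},\Phi_{\dg}) - (B_{\dg}(\Theta_1,\Theta_1,\Theta_1,\Phi_{\dg}) - B_{\dg}(\Theta_2,\Theta_2,\Theta_2,\Phi_{\dg})).
\]

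First I would expand the difference of the cubic forms around $\Theta_2$, exactly as in the computation of $T_4$ in the proof of Theorem \ref{thm2.5.1}. Using the multilinearity of $B_{\dg}(\cdot,\cdot,\cdot,\cdot)$ together with the algebraic identity that the portion linear in $\mathbf{e}$ collects to exactly $3B_{\dg}(\Theta_2,\Theta_2,\mathbf{e},\Phi_{\dg})$ (the same symmetrization that underlies the factor $3$ in the linearized form $\dual{{\rm DN}_h(\Psi)\cdot,\cdot}$), the cubic difference splits into this linear part plus remainders that are quadratic and cubic in $\mathbf{e}$. Pairing the linear part with $3B_{\dg}(G,G,\mathbf{e},\Phi_{\dg})$ and using bilinearity in the first two slots produces a factor $\vertiiidg{G-\Theta_2}$, which is $\leq R(h)$ since $\Theta_2\in\mathbb{B}_{R(h)}(G)$.

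Next I would estimate every surviving piece with the boundedness bound \eqref{eq:2.3.32} of Lemma \ref{2.3.31}. Since both $\Theta_1,\Theta_2\in\mathbb{B}_{R(h)}(G)$, we have $\vertiiidg{\mathbf{e}}\leq 2R(h)$, while $\vertiiidg{G},\vertiiidg{\Theta_2}\lesssim\vertiii{\Psi}_2=O(1)$ by Lemma \ref{2.3.35.1}. Hence the linear-in-$\mathbf{e}$ mismatch is controlled by $\epsilon^{-2}R(h)\vertiiidg{\mathbf{e}}\vertiiidg{\Phi_{\dg}}$, and the quadratic and cubic remainders, after absorbing one factor $\vertiiidg{\mathbf{e}}\leq 2R(h)$, are controlled by $\epsilon^{-2}(R(h)+R(h)^2)\vertiiidg{\mathbf{e}}\vertiiidg{\Phi_{\dg}}$. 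With the choice $R(h)=2C_3h$ fixed in Theorem \ref{thm2.5.1}, the entire right-hand side is $\lesssim\epsilon^{-2}h\,\vertiiidg{\mathbf{e}}\,\vertiiidg{\Phi_{\dg}}$.

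Finally, Lemma \ref{2.4.12} supplies a $\Phi_{\dg}$ with $\vertiiidg{\Phi_{\dg}}=1$ for which $\tfrac{\beta_0}{2}\vertiiidg{\mu_{\dg}(\Theta_1)-\mu_{\dg}(\Theta_2)}$ is bounded by the quantity just estimated, giving $\vertiiidg{\mu_{\dg}(\Theta_1)-\mu_{\dg}(\Theta_2)}\lesssim\epsilon^{-2}h\,\vertiiidg{\Theta_1-\Theta_2}$. Invoking $h=O(\epsilon^{2+\alpha})$ exactly as in Theorem \ref{thm2.5.1}, namely $\epsilon^{-2}h\leq h^{\alpha/(2+\alpha)}$, yields the claimed constant $Ch^{\alpha/(2+\alpha)}$. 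I expect the only real obstacle to be algebraic rather than analytic: organizing the difference of the cubic forms so that its leading linear part pairs precisely with $3B_{\dg}(G,G,\mathbf{e},\cdot)$ and every term that remains carries at least one small factor, either $\vertiiidg{G-\Theta_2}\leq R(h)$ or an extra power of $\vertiiidg{\mathbf{e}}\leq 2R(h)$, so that the prefactor collapses to $O(\epsilon^{-2}h)$; once this regrouping is in place, the estimates are routine applications of Lemmas \ref{2.3.31} and \ref{2.4.12}.
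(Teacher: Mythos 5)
Your proposal is correct and follows essentially the same route as the paper's proof: subtract the two defining relations \eqref{2.5.1}, use linearity of the perturbed form, regroup the difference of cubic terms so that every surviving term carries at least one factor of size $R(h)=O(h)$, bound everything via Lemma \ref{2.3.31} (and Remark \ref{4.5}), invoke the inf-sup condition of Lemma \ref{2.4.12}, and conclude with $h\epsilon^{-2}\le h^{\frac{\alpha}{2+\alpha}}$. The only difference is organizational: the paper regroups the integrands directly in terms of $\tilde{\mathbf{e}}_1={\rm I}_{\rm dG}\Psi-\Theta_1$ and $\tilde{\mathbf{e}}_2={\rm I}_{\rm dG}\Psi-\Theta_2$, whereas you expand around $\Theta_2$ and use the (verifiably correct) symmetrization identity that the part linear in $\mathbf{e}$ equals $3B_{\rm dG}(\Theta_2,\Theta_2,\mathbf{e},\cdot)$; both yield the same $\epsilon^{-2}h$ prefactor.
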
	
\begin{proof}
		Let $\Theta_1$ and  $\Theta_2 \in \mathbb{B}_{R(h)}(\textrm{I}_{\dg}\Psi)$.  
		For $\Phi_{\dg} \in \V_h$, a use of  \eqref{2.5.1}, the definition and linearity of $\dual{{\rm DN}_h({{\rm I}_{\rm dG}\Psi}) \cdot, \cdot }$, an elementary manipulation and grouping of term lead to 
		\begin{align*}
		&\dual{{\rm DN}_h({{\rm I}_{\rm dG}\Psi}) (\mu_{\rm dG}(\Theta_1)- \mu_{\rm dG}(\Theta_2)),\Phi_{\rm dG} } \notag\\&=3B_\dg(\textrm{I}_{\dg}\Psi,\textrm{I}_{\dg}\Psi, \Theta_1,\Phi_{\dg}) - B_\dg(\Theta_1,\Theta_1, \Theta_1,\Phi_{\dg})- 3B_\dg(\textrm{I}_{\dg}\Psi,\textrm{I}_{\dg}\Psi, \Theta_2,\Phi_{\dg}) + B_\dg(\Theta_2,\Theta_2, \Theta_2,\Phi_{\dg})\notag\\&
		=C_\epsilon \sum_{ T \in \mathcal{T}}\int_T \big(2\textrm{I}_{\dg}\Psi \cdot (\Theta_1-\Theta_2)(\textrm{I}_{\dg}\Psi \cdot \Phi_{\dg})+ (\abs{\textrm{I}_{\dg}\Psi}^2-\abs{\Theta_1}^2)(\Theta_1 \cdot\Phi_{\dg}) - (\abs{\textrm{I}_{\dg}\Psi}^2-\abs{\Theta_2}^2)(\Theta_2 \cdot\Phi_{\dg}) \big)\dx \notag\\&
		=2C_\epsilon \sum_{ T \in \mathcal{T}}\int_T \big(\textrm{I}_{\dg}\Psi \cdot (\Theta_1-\Theta_2)(\textrm{I}_{\dg}\Psi-\Theta_2) \cdot \Phi_{\dg} +  \textrm{I}_{\dg}\Psi \cdot (\Theta_1-\Theta_2)(\Theta_2 \cdot \Phi_{\dg})		\big)\dx
		 \notag\\& \quad +C_\epsilon \sum_{ T \in \mathcal{T}}\int_T \big((\textrm{I}_{\dg}\Psi -\Theta_1 )\cdot(\Theta_1-\textrm{I}_{\dg}\Psi )(\Theta_1-\Theta_2) \cdot\Phi_{\dg} +   2(\textrm{I}_{\dg}\Psi -\Theta_1 )\cdot \textrm{I}_{\dg}\Psi(\Theta_1-\Theta_2) \cdot\Phi_{\dg}				
		\notag \\& \quad+ (\textrm{I}_{\dg}\Psi -\Theta_1 )\cdot(\textrm{I}_{\dg}\Psi+\Theta_1 )(\Theta_2 \cdot\Phi_{\dg})\big)\dx
		-C_\epsilon \sum_{ T \in \mathcal{T}}\int_T (\textrm{I}_{\dg}\Psi -\Theta_2 )\cdot(\textrm{I}_{\dg}\Psi +\Theta_2 )(\Theta_2 \cdot\Phi_{\dg})\dx \notag\\& 
		=2C_\epsilon \sum_{ T \in \mathcal{T}}\int_T \textrm{I}_{\dg}\Psi \cdot (\Theta_1-\Theta_2)(\textrm{I}_{\dg}\Psi-\Theta_2) \cdot \Phi_{\dg}\dx
				+C_\epsilon\sum_{ T \in \mathcal{T}}\int_T(\textrm{I}_{\dg}\Psi -\Theta_1 )\cdot(\Theta_1-\textrm{I}_{\dg}\Psi )(\Theta_1-\Theta_2) \cdot\Phi_{\dg}\dx\notag 
				 \\& \quad
				+2C_\epsilon \sum_{ T \in \mathcal{T}}\int_T (\textrm{I}_{\dg}\Psi -\Theta_1 )\cdot \textrm{I}_{\dg}\Psi(\Theta_1-\Theta_2) \cdot\Phi_{\dg}\dx \notag
				\\& 	\quad	
				+C_\epsilon\sum_{ T \in \mathcal{T}}\int_T(\Theta_1-\Theta_2)\cdot ((\textrm{I}_{\dg}\Psi-\Theta_1)+(\textrm{I}_{\dg}\Psi-\Theta_2))((\Theta_2-\textrm{I}_{\dg}\Psi)+\textrm{I}_{\dg}\Psi) \cdot \Phi_{\dg}\dx
		\end{align*}
		Set $\tilde{\mathbf{e}}_1=\textrm{I}_{\dg}\Psi -\Theta_1$, $\tilde{\mathbf{e}}_2=\textrm{I}_{\dg}\Psi -\Theta_2$ and $\mathbf{e}=\Theta_1 -\Theta_2$. 	A use of Remark \ref{4.5}, $\vertiii{\tilde{\e}_1}_\dg\leq 2C_3h$ and $\vertiii{\tilde{\e}_2}_\dg\leq 2C_3h$  yields
		\begin{align}\label{2.5.17}
 	&\dual{{\rm DN}_h({{\rm I}_{\rm dG}\Psi}) (\mu_{\rm dG}(\Theta_1)- \mu_{\rm dG}(\Theta_2)),\Phi_{\rm dG} } \notag\\& =2C_\epsilon\sum_{ T \in \mathcal{T}}\int_T  (\textrm{I}_{\dg}\Psi \cdot \mathbf{e})(\tilde{\mathbf{e}}_2  \cdot \Phi_{\dg})\dx  +C_\epsilon\sum_{ T \in \mathcal{T}}\int_T(-\tilde{\mathbf{e}}_1 \cdot \tilde{\mathbf{e}}_1 )(\mathbf{e} \cdot\Phi_{\dg})\dx +  2C_\epsilon\sum_{ T \in \mathcal{T}}\int_T(\tilde{\mathbf{e}}_1 \cdot \textrm{I}_{\dg}\Psi)(\mathbf{e} \cdot\Phi_{\dg})\dx 
		\notag\\&
		\quad+ C_\epsilon \sum_{ T \in \mathcal{T}}\int_T \mathbf{e} \cdot (\tilde{\mathbf{e}}_1+\tilde{\mathbf{e}}_2) (-\tilde{\mathbf{e}}_2 +\textrm{I}_{\dg}\Psi) \cdot \Phi_{\dg}\dx \notag\\&
		\lesssim \epsilon^{-2}  (\vertiii{\tilde{\mathbf{e}}_2}_\dg+\vertiii{\tilde{\mathbf{e}}_1}_\dg+\vertiii{\tilde{\mathbf{e}}_1}^2_\dg+\vertiii{\tilde{\mathbf{e}}_2}^2_\dg)\vertiii{\mathbf{e}}_\dg \vertiii{\Phi_{\dg}}_\dg   \lesssim \epsilon^{-2}h(h+1)\vertiii{\e}_\dg\vertiii{\Phi_{\dg}}_\dg.
		\end{align}
		A use of \eqref{2.5.17} and Lemma \ref{2.4.12} yields that there exists a $\Phi_{\dg} \in \V_h$ with $\vertiii{\Phi_{\dg}}_\dg=1$ such that 
		\begin{align}\label{4.24}
		 \vertiii{\mu_{\rm dG}(\Theta_1)- \mu_{\rm dG}(\Theta_2)}_\dg \leq \frac{2}{\beta_0}\dual{{\rm DN}_h({{\rm I}_{\rm dG}\Psi}) (\mu_{\rm dG}(\Theta_1)- \mu_{\rm dG}(\Theta_2)),\Phi_{\rm dG} } \lesssim \epsilon^{-2} h(h+1)\vertiii{\e}_\dg.
		\end{align}
		The hidden constant in $"\lesssim"$ depends on $\vertiii{\Psi}_2$, $\beta_0$,  $C_P$ and $C_3$.
		A use of $h\leq \epsilon^{2+\alpha} $ with $\alpha>0$ in \eqref{4.24} implies $h\epsilon^{-2}\leq h^{\frac{\alpha}{2+\alpha}}$ and this completes the proof of Lemma \ref{4.3}. 
	\end{proof}
\begin{rem}
	Note that we require only $h=O(\epsilon^2)$ to prove the discrete inf-sup conditions in Theorem \ref{2.4.1.1} and Lemma \ref{2.4.12}, whereas we need $h=O(\epsilon^{2+\alpha})$ with $\alpha>0$ to prove Theorem \ref{thm2.5.1} and Lemma \ref{4.3}.
\end{rem}
Now we present the proof of results stated in Subsection \ref{section 3.1}.
	\begin{proof}[\textbf{Proof of Theorem  \ref{2.5.2}}]
		Let $\Psi$ be a regular solution of the non-linear system
		\eqref{eq:2.1.5} and let $\Psi_{\rm{dG} }$ solve \eqref{2.3.13}. 
		A use of Theorem \ref{thm2.5.1} yields that $\mu_{\rm dG}$ maps a non-empty convex closed subset $\mathbb{B}_{R(h)}(\textrm{I}_{\dg}\Psi)$ of a finite dimensional vector space $\V_h$ to itself. Also, $\mu_{\rm dG}$ is continuous. Therefore an application of the Brouwer fixed point theorem \cite{KesavaTopicsFunctinal} yields that $\mu_{\rm dG}$ has at least one fixed point, say $\Psi_{\dg}$ in this ball $\mathbb{B}_{R(h)}(\textrm{I}_{\dg}\Psi)$ (for details see \ref{Brouwer}). That is,
		\begin{align}\label{5.9}
		\vertiiidg{\Psi_{\dg }-\textrm{I}_{\dg}\Psi}\leq 2C_3h.
		\end{align}
		 The contraction result in Lemma \ref{4.3} establishes the uniqueness of the solution of \eqref{2.3.13} for a sufficiently small $h$. 
		The proof of error estimate is straightforward using Lemma \ref{2.3.35.1} and \eqref{5.9}.
	\end{proof}
	An alternative approach using Newton-Kantorovich theorem also provides an explicit formula for the radius $R(h)$ of the ball $\mathbb{B}_{R(h)}(\textrm{I}_{\dg}\Psi)$ and proves the existence and uniqueness of discrete solution.
	
	\noindent The Newton scheme below is motivated by $\Psi_{\rm dG }^n$ and $\Psi_{\rm dG }^{n-1}$ in place of $\Psi_{\rm dG }$ and  $\textrm{I}_{\dg}\Psi$, respectively, a substitution of $\mu_{\dg}(\Psi_{\dg}^{n})=\Psi_{\dg}^{n}$ in \eqref{2.5.1} and a use of definition of $\dual{{\rm DN}_h({{\rm I}_{\rm dG}\Psi})\cdot,\cdot }$ in Lemma \ref{2.4.12}. These substitutions yield     
	\begin{align}\label{eq:3.1.1}
	&A_\dg( \Psi_{\dg}^{n},\Phi_{\dg})+3B_\dg(\Psi_{\dg}^{n-1},\Psi_{\dg}^{n-1}, \Psi_{\dg}^{n},\Phi_{\dg})+C_\dg( \Psi_{\dg}^{n},\Phi_{\dg})\notag\\&= 2B_\dg(\Psi_{\dg}^{n-1},\Psi_{\dg}^{n-1}, \Psi_{\dg}^{n-1},\Phi_{\dg})+L_{\rm dG}(\Phi_{\rm dG}), \text{ where } n=1,2, \dots.
	\end{align}
\begin{thm}\cite{Kantorovich_1948,zeidler, keller}\label{thm:4.9}
	Suppose that the mapping ${\rm N}_h : D \subset \V_h \rightarrow \V_h$ is Fr\'echet differentiable on a open convex set $D$, and the derivative  
	${\rm DN}_h(\cdot)$ is Lipschitz continuous on $D$ with Lipschitz constant $L$.
	For a fixed starting point $x^0_{\rm dG} \in D, $ the inverse ${\rm DN}_h(x^0_{\rm dG})^{-1}$ exists as a continuous operator on $\V_h.$ The real numbers $a$ and $b$ are chosen such that 
	\begin{align}\label{eq:4.9}
	\vertiii{{\rm DN}_h(x^0_{\rm dG})^{-1}}_{L(\V^*_h;\V_h )} \leq a \quad \text{and} \quad \vertiii{{\rm DN}_h(x^0_{\rm dG})^{-1}N_h(x^0_{\rm dG})}_{\rm dG } \leq b
	\end{align}
	and $h*:=abL\leq \frac{1}{2}.$ Also, the first approximation $x_{\rm dG}^1:= {x^0_{\rm dG}}- {\rm DN}_h(x^0_{\rm dG})^{-1}{\rm N}_h(x^0_{\rm dG}) $ has a property that the closed ball $\bar{U}(x_{\rm dG }^1;r)$ lies within the domain of definition $D,$  where $r= \frac{1-\sqrt{1-2h^*}}{aL}-b.$
	Then the following are true.
	\begin{enumerate}
		\item {Existence and uniqueness}. There exists a solution $x_{\rm dG }\in \bar{U}(x_{\rm dG }^1;r)$ and the solution is unique on $\bar{U}(x^0_{\rm dG};r*)\cap D,$ that is on a suitable neighborhood of the initial point ${x^0_{\rm dG}}$ with $r*=\frac{1+\sqrt{1-2h^*}}{aL}. $ 
		\item Convergence of Newton's method. The Newton's scheme with initial iterate $x^0_{\rm dG}$ leads to a sequence $x^n_{\rm dG}$ in $\bar{U}(x^0_{\rm dG};r*)$, which converges to $x_{\rm dG }$ with error bound 
		\begin{align}\label{5.12}
		\vertiii{x^n_{\rm dG}-x_{\rm dG}}_{\rm dG }\leq \frac{(1-(1-2h^*)^{\frac{1}{2}})^{2^n}}{{2^n}aL}, \quad n=0,1 \dots.
		\end{align}
	\end{enumerate}
	\begin{thm}[Existence and uniqueness of discrete solution]
		Let $\Psi$ be a regular solution of the continuous non-linear system ${\rm N}(\Psi; \Phi)=0$ for all $\Phi \in \V.$	For a given fixed $\epsilon>0,$ a sufficiently large $\sigma$ and a sufficiently small discretization parameter chosen as $h=O(\epsilon^{2+\alpha} )$ with $\alpha > 0$, the following results hold true:
		\begin{enumerate}
			\item there exists a solution $\Psi_{\rm dG } \in \V_h$ to ${\rm N}_h(\Psi_{\rm dG }; \Phi_{\rm dG} )=0 $ for all $\Phi_{\rm dG} \in \V_h$ such that $\vertiii{\Psi -\Psi_{\rm dG } }_{\rm dG }\leq \rho,$ where $\rho= C_\rho(h+b+r)$ with $b, r $ defined in Theorem \ref{thm:4.9}. The constant $C_\rho$ depends on $\vertiii{\Psi}_2$ and $C_I$,  
			\item there is at most one solution $\Psi_{\rm dG }$ to ${\rm N}_h(\Psi_{\rm dG }; \Phi_{\rm dG} )=0 $ for all $\Phi_{\rm dG} \in \V_h$ in $\bar{U}({\rm I}_{\rm dG}\Psi;r*)\cap D$ with $r* =\frac{1+\sqrt{1-C_K h\epsilon^{-2}(1+h\epsilon^{-2})}}{2\beta_0^{-1}C_L\epsilon^{-2}},$ where the constant $C_K$ depends on $\vertiii{\Psi}_2$, $ C_A$, $ C_S$, $ C_P$, $ C_I$, $\beta_0$ and $C_L $, a constant from Lipschitz continuity of ${\rm DN}_h(\cdot),$
			\item the sequence $\Psi^n_{\rm dG }$ of iterates converges to $\Psi_{\rm dG }$ and the error bound is given by
			\begin{align}\label{4.26.1}
			\vertiii{\Psi^n_{\rm dG }-\Psi_{\rm dG }}_{\rm dG }\leq \frac{(1-(1-C_K h\epsilon^{-2}(1+h\epsilon^{-2}))^{\frac{1}{2}})^{2^n}}{{2^n}(2\beta_0^{-1}C_L\epsilon^{-2})}, \quad n=0,1 \dots.
			\end{align}
		\end{enumerate}
	\end{thm}
	\begin{proof}
		A use of the  definition of ${\rm DN}_h(\cdot)$, Lemma \ref{2.3.31} and a simple manipulation leads to the fact that ${\rm DN}_h(\cdot)$ is Lipschitz continuous on $D$ with Lipschitz constant $L=C_L\epsilon^{-2},$ where $C_L$ is a constant independent of $\epsilon.$ For a choice of 
	 $x^0_\dg={\rm I}_{\rm dG}\Psi$, Lemma \ref{2.4.12} yields 
		\begin{align} \label{4.9.1}
		\vertiii{{\rm DN}_h({{\rm I}_{\rm dG}\Psi})^{-1}}_{L(\V^*_h;\V_h )} \leq a
		\end{align}
		with $a=2\beta_0^{-1}.$
		Given $\Phi_{\dg} \in \V_\dg$ with $\vertiiidg{\Phi_{\dg} }=1,$  \eqref{2.5.3} leads to
		\begin{align*}
		&{\rm N}_h({{\rm I}_{\rm dG}\Psi};\Phi_{\dg})=A_{\dg}({{\rm I}_{\rm dG}\Psi},\Phi_{\dg})+B_{\dg}({{\rm I}_{\rm dG}\Psi},{{\rm I}_{\rm dG}\Psi},\Psi_{\dg},\Phi_{\dg})+C_{\dg}({{\rm I}_{\rm dG}\Psi},\Phi_{\dg})-L_\dg(\Phi_{\dg}) 
		\\	&= A_{\dg}(\textrm{I}_{\dg}\Psi -\Psi , \Phi_{\dg} )+C_{\dg}(\textrm{I}_{\dg}\Psi -\Psi , \Phi_{\dg} ) +(B_{\dg}(\textrm{I}_{\dg}\Psi,\textrm{I}_{\dg}\Psi,\textrm{I}_{\dg}\Psi , \Phi_{\dg} ) -B_{\dg}(\Psi,\Psi, \Psi,\Phi_{\dg}))\\&= T_1+ T_2+ T_3.
		\end{align*}
		A use of the estimates of $T_1, T_2, T_3$ from \eqref{2.5.4} yields 
		\begin{align} \label{4.9.2}
		{\rm N}_h({{\rm I}_{\rm dG}\Psi};\Phi_{\dg})\leq C_N(h+h^2\epsilon^{-2}),
		\end{align}
		where the constant $C_N$ depends on $\vertiii{\Psi}_2$, $C_A$, $ C_S$, $ C_P$ and $ C_I$.
		A use of \eqref{4.9.1} and \eqref{4.9.2} yields 
		\begin{align*}
		\vertiii{{\rm DN}_h({\rm I}_{\rm dG}\Psi)^{-1}{\rm N}_h({\rm I}_{\rm dG}\Psi)}_{\rm dG } \leq \vertiii{{\rm DN}_h({{\rm I}_{\rm dG}\Psi})^{-1}}_{L(\V^*_h;\V_h )} \vertiii{{\rm N}_h({{\rm I}_{\rm dG}\Psi})}_{\V^*_h }\leq b,
		\end{align*}
		where $b=2C_N\beta_0^{-1} h(1+h\epsilon^{-2}).$ 
		  A sufficiently small choice of $h\leq \epsilon^{2+\alpha}$ with $\alpha>0,$ leads to  $h^*= abL=4C_NC_L\beta_0^{-2} h\epsilon^{-2}(1+h\epsilon^{-2})\leq 4C_NC_L\beta_0^{-2} h^{\frac{\alpha}{2+\alpha}} (1+h^{\frac{\alpha}{2+\alpha}} )< C_Kh^{\frac{\alpha}{2+\alpha}} $ with $C_K=8C_NC_L\beta_0^{-2}$. For a choice of  $h< h_6:=\min(h_4, h_5)$ with $h_5^{\frac{\alpha}{2+\alpha}}<\frac{1}{2C_K},$ $h^*<\frac{1}{2}.$  Therefore, an application of Theorem \ref{thm:4.9} yields the existence of the discrete solution $\Psi_{\rm dG }$ with 
		$r= \frac{1-\sqrt{1-C_K h\epsilon^{-2}(1+h\epsilon^{-2})}}{2\beta_0^{-1}C_L \epsilon^{-2}}-2C_N\beta_0^{-1} h(1+h\epsilon^{-2})$ and $r*=\frac{1+\sqrt{1-C_K h\epsilon^{-2}(1+h\epsilon^{-2})}}{2\beta_0^{-1}C_L\epsilon^{-2}}$. A use of the second part of Theorem \ref{thm:4.9} yields the error bound $\vertiii{{\rm I}_{\rm dG}\Psi - \Psi_{\rm dG }}_{\rm dG}  \leq \frac{1-\sqrt{1-C_K h\epsilon^{-2}(1+h\epsilon^{-2})}}{2\beta_0^{-1}C_L\epsilon^{-2}}=O(h) $ which justifies the choice of $R(h)$ in Theorem \ref{thm2.5.1}.	
		A use of triangle inequality, Lemma \ref{2.3.35.1} and the second estimate in \eqref{eq:4.9} for the first Newton correction leads to
		\begin{align*}
		\vertiii{\Psi - \Psi_{\rm dG }}_{\rm dG} \leq \vertiii{\Psi - {\rm
				I}_{\rm dG}\Psi}_{\rm dG}+\vertiii{{\rm
				I}_{\rm dG}\Psi - \Psi^1_{\rm dG }}_{\rm dG}+\vertiii{\Psi^1_{\rm dG } - \Psi_{\rm dG }}_{\rm dG} \leq C_\rho (h+ b+ r)=:\rho.
		\end{align*} 
		A substitution of $h^*, a $ and $L$ in \eqref{5.12} yields the error bound in the Newton's convergence given by \eqref{4.26.1}.
	\end{proof}
\end{thm}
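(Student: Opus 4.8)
The plan is to verify the hypotheses of the Newton--Kantorovich theorem (Theorem \ref{thm:4.9}) at the natural starting point $x^0_{\rm dG}={\rm I}_{\rm dG}\Psi$, chosen so that the linearization ${\rm DN}_h(x^0_{\rm dG})$ coincides \emph{exactly} with the perturbed bilinear form $\dual{{\rm DN}_h({\rm I}_{\rm dG}\Psi)\cdot,\cdot}$ already analyzed in Lemma \ref{2.4.12}. First I would assemble the three quantities the theorem requires: the Lipschitz constant $L$ of ${\rm DN}_h(\cdot)$ on the bounded convex set $D$, the bound $a$ on $\vertiiidg{{\rm DN}_h(x^0_{\rm dG})^{-1}}$, and the bound $b$ on the first Newton correction $\vertiiidg{{\rm DN}_h(x^0_{\rm dG})^{-1}{\rm N}_h(x^0_{\rm dG})}$.

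For $L$: since ${\rm DN}_h(\Psi)$ depends on its argument $\Psi$ only through the term $3B_{\rm dG}(\Psi,\Psi,\cdot,\cdot)$, which is quadratic, the difference ${\rm DN}_h(\Psi)-{\rm DN}_h(\tilde\Psi)$ factors into bilinear contributions in $\Psi-\tilde\Psi$ and $\Psi+\tilde\Psi$, and the boundedness estimate \eqref{eq:2.3.32} of Lemma \ref{2.3.31} yields Lipschitz continuity with $L=C_L\epsilon^{-2}$ for an $\epsilon$-independent $C_L$. For $a$: Lemma \ref{2.4.12} provides the inf-sup bound $\frac{\beta_0}{2}$ for the perturbed form, whence $\vertiiidg{{\rm DN}_h({\rm I}_{\rm dG}\Psi)^{-1}}\le a:=2\beta_0^{-1}$. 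For $b$: I would estimate the residual ${\rm N}_h({\rm I}_{\rm dG}\Psi)$ by reusing the consistency decomposition already performed in \eqref{2.5.4}, writing it as $T_1+T_2+T_3$ (the interpolation errors of $A_{\rm dG}$ and $C_{\rm dG}$, and the cubic difference $B_{\rm dG}({\rm I}_{\rm dG}\Psi,\dots)-B_{\rm dG}(\Psi,\dots)$), and invoking Lemmas \ref{2.3.17}, \ref{2.3.31} and the interpolation estimate of Lemma \ref{2.3.35.1} to obtain $\vertiiidg{{\rm N}_h({\rm I}_{\rm dG}\Psi)}\lesssim h+h^2\epsilon^{-2}$; combining with $a$ gives $b=2C_N\beta_0^{-1}h(1+h\epsilon^{-2})$.

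With these in hand I would form $h^*:=abL=4C_NC_L\beta_0^{-2}\,h\epsilon^{-2}(1+h\epsilon^{-2})$ and verify the smallness condition $h^*\le\frac{1}{2}$. This is where the $h$--$\epsilon$ scaling is decisive: choosing $h=O(\epsilon^{2+\alpha})$ with $\alpha>0$ forces $h\epsilon^{-2}\le h^{\alpha/(2+\alpha)}\to 0$, so that $h^*\le C_K h^{\alpha/(2+\alpha)}$ with $C_K=8C_NC_L\beta_0^{-2}$, and restricting $h$ below a threshold $h_5$ (in addition to the thresholds already fixed in Lemma \ref{2.4.12} and Theorem \ref{thm2.5.1}) secures $h^*<\frac{1}{2}$. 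Theorem \ref{thm:4.9} then delivers existence in $\bar U(x^1_{\rm dG};r)$, local uniqueness in $\bar U({\rm I}_{\rm dG}\Psi;r^*)\cap D$ with the stated $r^*$, and the quadratic error bound \eqref{4.26.1} upon substituting $h^*$, $a$ and $L$. The global estimate $\vertiiidg{\Psi-\Psi_{\rm dG}}\le\rho$ follows from the triangle-inequality split into $\Psi-{\rm I}_{\rm dG}\Psi$, ${\rm I}_{\rm dG}\Psi-\Psi^1_{\rm dG}$, $\Psi^1_{\rm dG}-\Psi_{\rm dG}$, together with Lemma \ref{2.3.35.1} and the bounds $b$, $r$.

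The hard part will be the constant-tracking that makes $h^*\le\frac{1}{2}$ compatible with the pre-existing thresholds: the inverse bound $a$ is $O(\beta_0^{-1})$, already $\epsilon$-dependent through $\beta_0$, while $L$ carries the full $\epsilon^{-2}$ blow-up of the cubic term, so $h^*$ contains the dangerous product $\beta_0^{-2}\epsilon^{-2}$. Taming this factor is precisely what the exponent $2+\alpha$ is engineered for, and I would check that the same window $h=O(\epsilon^{2+\alpha})$ used for the contraction argument of Lemma \ref{4.3} simultaneously guarantees $h^*\le\frac{1}{2}$. The residual estimate $T_1+T_2+T_3$ is routine once \eqref{2.5.4} is cited, so no genuinely new analytic difficulty arises beyond this bookkeeping.
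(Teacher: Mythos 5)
Your proposal is correct and follows essentially the same route as the paper: the same starting point $x^0_{\rm dG}={\rm I}_{\rm dG}\Psi$, the same three ingredients $a=2\beta_0^{-1}$ from Lemma \ref{2.4.12}, $L=C_L\epsilon^{-2}$ from the quadratic dependence of $3B_{\rm dG}(\Psi,\Psi,\cdot,\cdot)$, and $b$ from the residual decomposition $T_1+T_2+T_3$ of \eqref{2.5.4}, leading to the identical $h^*=4C_NC_L\beta_0^{-2}h\epsilon^{-2}(1+h\epsilon^{-2})$, the same smallness threshold via $h=O(\epsilon^{2+\alpha})$, and the same concluding triangle-inequality split for $\rho$. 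No gaps; your closing remarks on constant tracking merely make explicit what the paper's proof does implicitly.
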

\begin{proof}[\textbf{Proof of Theorem  \ref{thmbestapproximation}}]
	Let $\Psi^*_{\dg }$ be the best approximation of $\Psi$ in $\V_h$. Then  $$\vertiiidg{\Psi-\Psi^*_{\dg }}=\min_{\Theta_{\rm dG} \in \V_h}\vertiii{\Psi- \Theta_{\rm dG}}_{\rm dG}.$$ Set $\mathbf{e}_\dg= \Psi^*_{\rm dG }-\Psi_{\rm dG } \in \V_h. $ A use of Theorem \ref{2.4.1.1} yields that there exists a $\Phi_{\rm dG} \in \V_h$ with $\vertiiidg{\Phi_{\rm dG}} = 1 $ such that 
\begin{align}\label{4.29}
\beta_0\vertiiidg{\mathbf{e}_\dg}\leq \dual{{\rm DN}_h(\Psi)\mathbf{e}_\dg, \Phi_{\rm dG}}. 
\end{align}
Set $\mathbf{e}_1= \Psi-\Psi_{\rm dG } $. A use of Taylor series expansion of ${\rm N}_h(\cdot; \cdot)$ around $\Psi$ leads to 
\begin{align*}
{\rm N}_h(\Psi_{\rm dG }; \Phi_{\rm dG})= {\rm N}_h(\Psi; \Phi_{\rm dG}) - \dual{{\rm DN}_h(\Psi)\mathbf{e}_1, \Phi_{\rm dG}} +\frac{1}{2}\dual{{\rm D}^2{\rm N}_h(\Psi)(\mathbf{e}_1)\mathbf{e}_1, \Phi_{\rm dG}}-\frac{1}{6}\dual{{\rm D}^3{\rm N}_h(\Psi)(\mathbf{e}_1)(\mathbf{e}_1)\mathbf{e}_1, \Phi_{\rm dG}}.
\end{align*}
A use of \eqref{2.3.13} and \eqref{2.5.3} lead to
\begin{align*}
0=  \dual{{\rm DN}_h(\Psi)\mathbf{e}_1, \Phi_{\rm dG}} -\frac{1}{2}\dual{{\rm D}^2{\rm N}_h(\Psi)(\mathbf{e}_1)\mathbf{e}_1, \Phi_{\rm dG}}+\frac{1}{6}\dual{{\rm D}^3{\rm N}_h(\Psi)(\mathbf{e}_1)(\mathbf{e}_1)\mathbf{e}_1, \Phi_{\rm dG}}.
\end{align*}
Rewrite $\mathbf{e}_1= (\Psi-\Psi^*_{\rm dG })+ \mathbf{e}_\dg$. A use of the linearity of $\dual{{\rm DN}_h(\Psi)\cdot, \cdot}$, \eqref{4.29} and the above equality leads to
\begin{align}\label{4.30}
\dual{{\rm DN}_h(\Psi)\mathbf{e}_\dg, \Phi_{\rm dG}}=\dual{{\rm DN}_h(\Psi)(\Psi^*_{\rm dG }-\Psi), \Phi_{\rm dG}}+\frac{1}{2}\dual{{\rm D}^2{\rm N}_h(\Psi)(\mathbf{e}_1)\mathbf{e}_1, \Phi_{\rm dG}}-\frac{1}{6}\dual{{\rm D}^3{\rm N}_h(\Psi)(\mathbf{e}_1)(\mathbf{e}_1)\mathbf{e}_1, \Phi_{\rm dG}}.
\end{align}
Since $\dual{{\rm DN}_h(\Psi)\mathbf{e}_1, \Phi_{\rm dG}}=A_{\rm dG}(\mathbf{e}_1,\Phi_{\rm dG})+3B_{\rm dG}(\Psi,\Psi,\mathbf{e}_1,\Phi_{\rm dG})+C_{\rm dG}(\mathbf{e}_1,\Phi_{\rm dG}),$ $\dual{{\rm D}^2{\rm N}_h(\Psi)(\mathbf{e}_1)\mathbf{e}_1, \Phi_{\rm dG}} = 6 B_\dg(\mathbf{e}_1,\mathbf{e}_1, \Psi, \Phi_{\rm dG})$ and $\dual{{\rm D}^3{\rm N}_h(\Psi)(\mathbf{e}_1)(\mathbf{e}_1)\mathbf{e}_1, \Phi_{\rm dG}}=6B_\dg(\mathbf{e}_1,\mathbf{e}_1, \mathbf{e}_1, \Phi_{\rm dG}),$ a use of Lemmas \ref{2.3.17} and \ref{2.3.31} leads to
\begin{align}\label{4.31}
\vertiii{{\rm DN}_h(\Psi)}_{\mathbf{L}^2}\lesssim (1+\epsilon^{-2}),\, \vertiii{{\rm D}^2{\rm N}_h(\Psi)}_{\mathbf{L}^2}\lesssim \epsilon^{-2} \text{ and } \vertiii{{\rm D}^3{\rm N}_h(\Psi)}_{\mathbf{L}^2}\lesssim \epsilon^{-2}.
\end{align}
The constant in $"\lesssim"$ depends on $\vertiii{\Psi}_2$,  $C_A$, $C_S$ and $C_P$.
A use of \eqref{4.29} and \eqref{4.31} in \eqref{4.30} yields
\begin{align}\label{4.32}
\beta_0 \vertiiidg{\mathbf{e}_\dg}\lesssim (1+\epsilon^{-2})\vertiii{\Psi^*_{\rm dG }-\Psi}_{\rm dG}+\epsilon^{-2} \vertiii{\mathbf{e}_1}_{\rm dG}^2+\epsilon^{-2} \vertiii{\mathbf{e}_1}_{\rm dG}^3.
\end{align}
The triangle inequality and \eqref{4.32} leads to 
\begin{align}\label{5.19}
\vertiiidg{\mathbf{e}_1}\leq \vertiii{\Psi- \Psi^*_{\rm dG }}_{\rm dG}+ \vertiii{\mathbf{e}_\dg}_{\rm dG} \lesssim (1+\epsilon^{-2})\vertiii{\Psi^*_{\rm dG }-\Psi}_{\rm dG}+\epsilon^{-2}(\vertiii{\mathbf{e}_1}_{\rm dG}^2+\vertiii{\mathbf{e}_1}_{\rm dG}^3). 
\end{align}
For a sufficiently small choice of the discretization parameter $h=O(\epsilon^{2+\alpha}) $ with $\alpha>0$, use  $\vertiii{\mathbf{e}_1}_{\rm dG}\leq C_e h$ in Theorem \ref{2.5.2} and \eqref{5.19} to obtain 
\begin{align*}
(1+\epsilon^{-2})\vertiii{\Psi^*_{\rm dG }-\Psi}_{\rm dG} \geq (C_4-\epsilon^{-2}(\vertiii{\mathbf{e}_1}_{\rm dG}+\vertiii{\mathbf{e}_1}_{\rm dG}^2))\vertiiidg{\mathbf{e}_1} \geq (C_4-h\epsilon^{-2}(C_e+C_e^2))\vertiiidg{\mathbf{e}_1}, 
\end{align*}
where the constant $C_4$ depends on  $\vertiii{\Psi}_2$, $C_A$, $C_S$, $C_P$ and $\beta_0$.
Since $h\epsilon^{-2}\leq h^{\frac{\alpha}{2+\alpha}}$, a sufficiently small choice of $h < h_8:=\min(h_6, h_7)$ with $h_7^{\frac{\alpha}{2+\alpha}}=\frac{C_4}{2(C_e+C_e^2)}$ completes the proof of Theorem \ref{thmbestapproximation} for $C_B=\frac{2}{C_4}.$	 
\end{proof}
 The next two lemmas are required to prove Theorem \ref{sipg}. 
\begin{lem} \label{5.6}
	For $\lambda = -1$, any $\Psi \in \mathbf{H}^2(\Omega)$,  $\boldsymbol{\chi} \in \mathbf{H}^2(\Omega) \cap \V$ and the interpolation ${\rm I}_{\rm dG}\Psi \in \V_h$ satisfy
	\begin{align*}
	A_{\rm dG}({\rm I}_{\rm dG}\Psi -\Psi, \boldsymbol{\chi}) \lesssim h^2\vertiii{\Psi}_2\vertiii{\boldsymbol{\chi}}_2.
	\end{align*}
\end{lem}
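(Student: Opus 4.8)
The plan is to use the symmetry of the formulation ($\lambda=-1$) together with the extra regularity $\boldsymbol{\chi}\in\mathbf{H}^2(\Omega)\cap\V$ to integrate by parts and collapse $A_{\rm dG}({\rm I}_{\rm dG}\Psi-\Psi,\boldsymbol{\chi})$ into a single volume term, which is then $O(h^2)$ by the $\mathbf{L}^2$ interpolation estimate. Since $A_{\rm dG}$ acts componentwise, it suffices to bound $a_{\dg}(\theta,\chi)$ for $\theta:={\rm I}_{\rm dG}\psi-\psi$ and $\chi$ the corresponding component of $\boldsymbol{\chi}$, so that $\chi\in H^2(\Omega)\cap H^1_0(\Omega)$; the componentwise bounds are recombined at the end by a discrete Cauchy--Schwarz inequality.

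First I would exploit the continuity of $\chi$. Because $\chi\in H^1_0(\Omega)$, its jumps vanish: $[\chi]=0$ on every interior edge and $\chi=0$ on every boundary edge. Hence the two jump-carrying terms of $a_{\dg}(\theta,\chi)$ disappear, namely $J(\theta,\chi)=\sum_{E\in\mathcal{E}}\int_E\{\frac{\partial\theta}{\partial n}\}[\chi]\ds=0$ and $J^\sigma(\theta,\chi)=\sum_{E\in\mathcal{E}}\int_E\frac{\sigma}{h}[\theta][\chi]\ds=0$, leaving $a_{\dg}(\theta,\chi)=a_h(\theta,\chi)+\lambda J(\chi,\theta)$.

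Next I would integrate by parts elementwise. Since $\theta|_T\in H^2(T)$ and $\chi\in H^2(\Omega)$, Green's formula on each $T$ gives $\int_T\nabla\theta\cdot\nabla\chi\dx=\int_{\partial T}\theta\,\frac{\partial\chi}{\partial n_T}\ds-\int_T\theta\,\Delta\chi\dx$. As $\chi\in H^2(\Omega)$ the normal derivative has no jump ($\{\frac{\partial\chi}{\partial n}\}=\frac{\partial\chi}{\partial n}$), so assembling the element-boundary integrals edge by edge and accounting for the boundary-edge convention yields $a_h(\theta,\chi)=\sum_{E\in\mathcal{E}}\int_E\{\frac{\partial\chi}{\partial n}\}[\theta]\ds-\int_\Omega\theta\,\Delta\chi\dx$. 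Combining with $\lambda J(\chi,\theta)=\lambda\sum_{E\in\mathcal{E}}\int_E\{\frac{\partial\chi}{\partial n}\}[\theta]\ds$ gives $a_{\dg}(\theta,\chi)=(1+\lambda)\sum_{E\in\mathcal{E}}\int_E\{\frac{\partial\chi}{\partial n}\}[\theta]\ds-\int_\Omega\theta\,\Delta\chi\dx$. This is precisely where $\lambda=-1$ enters: the edge terms cancel and one is left with the clean identity $a_{\dg}({\rm I}_{\rm dG}\psi-\psi,\chi)=-\int_\Omega({\rm I}_{\rm dG}\psi-\psi)\,\Delta\chi\dx$.

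Finally I would estimate the remaining term by Cauchy--Schwarz and the interpolation estimate of Lemma \ref{2.3.35.1} with $s=2$, $l=0$: summing the local bounds $\norm{{\rm I}_{\rm dG}\psi-\psi}_{L^2(T)}\le C_I h_T^2\norm{\psi}_{H^2(T)}$ (using $h_T\le h$) gives $\norm{{\rm I}_{\rm dG}\psi-\psi}_0\lesssim h^2\norm{\psi}_2$, whence $\abs{a_{\dg}(\theta,\chi)}\le\norm{{\rm I}_{\rm dG}\psi-\psi}_0\,\norm{\Delta\chi}_0\lesssim h^2\norm{\psi}_2\norm{\chi}_2$. Summing over the two components and using $\norm{\psi_1}_2\norm{\chi_1}_2+\norm{\psi_2}_2\norm{\chi_2}_2\le\vertiii{\Psi}_2\vertiii{\boldsymbol{\chi}}_2$ then yields the claim. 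The only delicate point is the edge bookkeeping in the integration-by-parts step: one must match the average/jump conventions (including the boundary-edge convention) so that the scattered element-boundary integrals reassemble exactly into $\sum_{E}\int_E\{\frac{\partial\chi}{\partial n}\}[\theta]\ds$, which is what makes the $\lambda=-1$ cancellation exact; the remaining estimates are routine.
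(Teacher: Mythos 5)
Your proposal is correct and follows essentially the same route as the paper's proof: drop the jump terms using $[\boldsymbol{\chi}]=0$, integrate by parts elementwise, use the single-valuedness of $\nabla\boldsymbol{\chi}$ (from $\mathbf{H}^2$ regularity) to reassemble the element-boundary integrals into $\sum_{E}\int_E\{\frac{\partial\boldsymbol{\chi}}{\partial\eta}\}\cdot[{\rm I}_{\rm dG}\Psi-\Psi]\,{\rm ds}$ so that the $\lambda=-1$ term cancels it exactly, and bound the surviving volume term $-\int_\Omega({\rm I}_{\rm dG}\Psi-\Psi)\cdot\Delta\boldsymbol{\chi}\,{\rm dx}$ by Cauchy--Schwarz and the $L^2$ interpolation estimate. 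The only cosmetic differences are that you argue componentwise with an explicit $(1+\lambda)$ factor, while the paper works vectorially and cancels the two edge sums directly.
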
	
\begin{proof}
	A use of definition of $A_{\rm dG}(\cdot, \cdot)$, $[\boldsymbol{\chi}]=0 $ on $\mathcal{E}_i,$ $\boldsymbol{\chi} = 0$ on $\mathcal{E}_D$ and integration by parts yields
	\begin{align*}
	&A_{\rm dG}({\rm I}_{\rm dG}\Psi -\Psi, \boldsymbol{\chi})= \sum_{ T \in \mathcal{T}} \int_T \nabla ({\rm I}_{\rm dG}\Psi -\Psi) \cdot \nabla \boldsymbol{\chi} \dx -  \sum_{E \in \mathcal{E}} \int_{E} \{\frac{\partial \boldsymbol{\chi}}{\partial \eta}\} \cdot [{\rm I}_{\rm dG}\Psi -\Psi] \ds\\& = -\sum_{ T \in \mathcal{T}} \int_T ({\rm I}_{\rm dG}\Psi -\Psi) \cdot \Delta \boldsymbol{\chi} \dx + \sum_{T \in \mathcal{T}} \int_{\partial T} \frac{\partial \boldsymbol{\chi}}{\partial \eta} \cdot ({\rm I}_{\rm dG}\Psi -\Psi) \ds	-  \sum_{E \in \mathcal{E}} \int_{E} \{\frac{\partial \boldsymbol{\chi}}{\partial \eta}\} \cdot [{\rm I}_{\rm dG}\Psi -\Psi] \ds.	
	\end{align*}
	We have $\sum_{T \in \mathcal{T}} \int_{\partial T} \frac{\partial \boldsymbol{\chi}}{\partial \eta} \cdot ({\rm I}_{\rm dG}\Psi -\Psi) \ds= \sum_{E \in \mathcal{E}_i} \int_{E} [\frac{\partial \boldsymbol{\chi}}{\partial \eta}] \cdot \{{\rm I}_{\rm dG}\Psi -\Psi\} \ds+\sum_{E \in \mathcal{E}} \int_{E}\{\frac{\partial \boldsymbol{\chi}}{\partial \eta}\} \cdot [{\rm I}_{\rm dG}\Psi -\Psi] \ds.$
	Since $\boldsymbol{\chi} \in \mathbf{H}^2(\Omega),$ $[\nabla {\boldsymbol{\chi}}]=0$ for all $E \in \mathcal{E}_i.$ A use of Cauchy-Schwartz inequality and  Lemma \ref{2.3.35.1} leads to, 
	\begin{align*}
	A_{\rm dG}({\rm I}_{\rm dG}\Psi -\Psi, \boldsymbol{\chi})= -\sum_{ T \in \mathcal{T}} \int_T ({\rm I}_{\rm dG}\Psi -\Psi) \cdot \Delta \boldsymbol{\chi} \dx \leq \vertiii{{\rm I}_{\rm dG}\Psi -\Psi}_0 \vertiii{\boldsymbol{\chi}}_2 \lesssim h^2 \vertiii{\Psi}_2 \vertiii{\boldsymbol{\chi}}_2,
	\end{align*}
	where the constant suppressed in $"\lesssim"$ depends on $C_I.$ This concludes the proof.
\end{proof}
\noindent For given $G \in \mathbf{L}^2(\Omega)$ and $\Psi$ that solves \eqref{2.3.1.2}, consider the linear dual problem: 
\begin{align}\label{4.9.1.1}
-\Delta \boldsymbol{\chi} + \frac{2}{\epsilon^2}(\abs{\Psi}^2 \boldsymbol{\chi} + 2 (\Psi \cdot \boldsymbol{\chi})\Psi)-\frac{2}{\epsilon^2} \boldsymbol{\chi}= G \,\, \text{in} \,\,\Omega  \,\, \text{ and } \boldsymbol{\chi} =0 \text{ on } \,\, \partial \Omega.   
\end{align}
The weak formulation that corresponds to \eqref{4.9.1.1} seeks $\boldsymbol{\chi} \in \V$ such that
\begin{align}\label{dualweak}
\dual{{\rm DN}(\Psi)\Phi, \boldsymbol{\chi}}:= A(\Phi, \boldsymbol{\chi})+ B(\Psi,\Psi,\Phi, \boldsymbol{\chi})+ C(\Phi, \boldsymbol{\chi})=(G, \Phi) \text{ for all }  \Phi \in \V.
\end{align}
A use of \eqref{2.9} establishes the well-posedness of \eqref{dualweak}. 
\begin{lem}\label{xiregularity}
	A solution $\boldsymbol{\chi} $ of \eqref{dualweak} belongs to $  \mathbf{H}^2(\Omega) \cap \V$  and satisfies $\vertiii{\boldsymbol{\chi}}_2 \lesssim (1+\epsilon^{-2})\vertiii{G}_0,$
	where the hidden constant in $"\lesssim"$ depends on $\vertiii{\Psi}_2, C_S $ and $\beta.$  	
\end{lem}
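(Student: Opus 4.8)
The plan is to recast the dual problem \eqref{dualweak} as a Poisson problem with the lower-order (reaction) terms treated as a right-hand side, and then invoke $H^2$ elliptic regularity on the convex domain $\Omega$, exactly as in Lemma \ref{regularity}. Writing the strong form \eqref{4.9.1.1} as $-\Delta\boldsymbol{\chi} = \tilde{G}$ with $\boldsymbol{\chi} = 0$ on $\partial\Omega$ and
\begin{align*}
\tilde{G} := G - \frac{2}{\epsilon^2}\big(\abs{\Psi}^2\boldsymbol{\chi} + 2(\Psi\cdot\boldsymbol{\chi})\Psi\big) + \frac{2}{\epsilon^2}\boldsymbol{\chi},
\end{align*}
it suffices to show $\tilde{G} \in \mathbf{L}^2(\Omega)$ together with a quantitative bound; the regularity $\boldsymbol{\chi} \in \mathbf{H}^2(\Omega) \cap \V$ and the estimate $\vertiii{\boldsymbol{\chi}}_2 \lesssim \vertiii{\tilde{G}}_0$ then follow from the same convex-domain Poisson regularity argument (\cite{grisvard}) invoked in Lemma \ref{regularity}.

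The first step is an a priori $\mathbf{H}^1$ bound on $\boldsymbol{\chi}$, needed to control the $\boldsymbol{\chi}$-dependent terms in $\tilde{G}$. The well-posedness of \eqref{dualweak} rests on the inf-sup condition \eqref{2.9}, whose second (adjoint) form gives
\begin{align*}
\vertiii{\boldsymbol{\chi}}_1 \leq \frac{1}{\beta}\sup_{\substack{\Phi \in \V \\ \vertiii{\Phi}_1 = 1}} (G, \Phi).
\end{align*}
Since $(G,\Phi) \leq \vertiii{G}_0\vertiii{\Phi}_0$ and the Poincar\'e inequality (Lemma \ref{4.2}) yields $\vertiii{\Phi}_0 \lesssim \vertiii{\Phi}_1$, I obtain $\vertiii{\boldsymbol{\chi}}_1 \lesssim \beta^{-1}\vertiii{G}_0$, and in particular $\vertiii{\boldsymbol{\chi}}_0 \lesssim \beta^{-1}\vertiii{G}_0$.

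Next I would control the reaction terms. Because $\Psi$ is a regular solution, Lemma \ref{regularity} gives $\Psi \in \mathbf{H}^2(\Omega)$, and the Sobolev embedding $H^2(\Omega)\hookrightarrow L^\infty(\Omega)$ yields $\vertiii{\Psi}_{0,\infty} \leq C_S\vertiii{\Psi}_2$. Hence $\norm{\abs{\Psi}^2\boldsymbol{\chi}}_0$ and $\norm{(\Psi\cdot\boldsymbol{\chi})\Psi}_0$ are each bounded by $C_S^2\vertiii{\Psi}_2^2\vertiii{\boldsymbol{\chi}}_0$, and combining with the $\mathbf{L}^2$ bound above gives
\begin{align*}
\vertiii{\tilde{G}}_0 \leq \vertiii{G}_0 + \frac{C}{\epsilon^2}\big(1 + \vertiii{\Psi}_2^2\big)\vertiii{\boldsymbol{\chi}}_0 \lesssim (1 + \epsilon^{-2})\vertiii{G}_0,
\end{align*}
where the hidden constant collects $\vertiii{\Psi}_2$, $C_S$ and $\beta$. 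Feeding this into the Poisson regularity estimate $\vertiii{\boldsymbol{\chi}}_2 \lesssim \vertiii{\tilde{G}}_0$ then yields $\boldsymbol{\chi} \in \mathbf{H}^2(\Omega)\cap\V$ with the asserted bound, completing the argument.

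The only genuine subtlety is a potential circularity: the $\mathbf{L}^2$ norm of the unknown $\boldsymbol{\chi}$ appears inside the right-hand side $\tilde{G}$, so the independent $\mathbf{H}^1$ stability bound extracted from the inf-sup condition \eqref{2.9} must be secured \emph{before} elliptic regularity is applied. The remainder is routine bookkeeping of the $\epsilon$-powers to land exactly on the factor $(1+\epsilon^{-2})$ rather than a larger negative power of $\epsilon$.
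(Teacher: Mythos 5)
Your proposal is correct and follows essentially the same route as the paper: an a priori $\mathbf{H}^1$ bound on $\boldsymbol{\chi}$ from the adjoint form of the inf-sup condition \eqref{2.9}, followed by moving the reaction terms to the right-hand side, bounding them in $\mathbf{L}^2$ via $H^2(\Omega)\hookrightarrow L^\infty(\Omega)$ for $\Psi$ (which is exactly the content of \eqref{eq:2.1.5.3} and \eqref{eq:2.1.5.4} used in the paper), and concluding with convex-domain elliptic regularity for the Laplacian. Your explicit flagging of the order of steps (stability bound before regularity, to avoid circularity) matches what the paper does implicitly via its estimate \eqref{5.22}.
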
 
\begin{proof}
	A use of \eqref{2.9} and \eqref{dualweak} yields 
	\begin{align}\label{5.22}
	\beta \vertiii{\boldsymbol{\chi}}_1 \leq  \sup_{\substack{\Phi \in \V \\  \vertiii{\Phi}_1=1}}\dual{{\rm DN}(\Psi)\Phi, \boldsymbol{\chi}} = \sup_{\substack{\Phi \in \V \\  \vertiii{\Phi}_1=1}}(G, \Phi) \leq \vertiii{G}_0. 
	\end{align} 
	A use of  \eqref{eq:2.1.5.4} and \eqref{eq:2.1.5.3} implies that $ \vertiii{-3B(\Psi,\Psi, \cdot, \boldsymbol{\chi}) -C(\cdot, \boldsymbol{\chi})+ G }_0 \lesssim \epsilon^{-2}\vertiii{\boldsymbol{\chi}}_1 + \vertiii{G}_0 $. 
	Hence, the elliptic regularity \cite{Evance19} with a boot-strapping argument and \eqref{5.22} completes the proof.
\end{proof}
\begin{proof}[\textbf{Proof of Theorem  \ref{sipg}}]
	Set $G=\textrm{I}_{\dg}\Psi -\Psi_\dg $ in \eqref{4.9.1.1}.  Multiply \eqref{4.9.1.1} by $\Phi_{\rm dG}=\textrm{I}_{\dg}\Psi -\Psi_\dg$ and integrate by parts to obtain
	\begin{align} \label{4.12}
	\dual{{\rm DN}_h(\Psi)\textrm{I}_{\dg}\Psi -\Psi_\dg, \boldsymbol{\chi}}=
	\vertiii{\textrm{I}_{\dg}\Psi -\Psi_\dg}_0^2,
	\end{align}
	Here, $\dual{{\rm DN}_h(\Psi)\textrm{I}_{\dg}\Psi -\Psi_\dg, \boldsymbol{\chi}}=  A_{\rm dG}(\textrm{I}_{\dg}\Psi -\Psi_\dg, \boldsymbol{\chi}) + B_{\rm dG}(\Psi, \Psi, \textrm{I}_{\dg}\Psi -\Psi_\dg, \boldsymbol{\chi}) + C_{\rm dG}(\textrm{I}_{\dg}\Psi -\Psi_\dg, \boldsymbol{\chi}).$ Note that the terms that involve $[\boldsymbol{\chi}]$ in the definition of $A_{\rm dG}(\cdot, \cdot)$ are zero. However, they are retained for the ease of further manipulations.
	 A use of  \eqref{2.3.13} and the fact that $\Psi \in \mathbf{H}^2(\Omega)$ satisfies the discrete formulation  \eqref{2.3.13} leads to
	\begin{align}
	&\vertiii{ \textrm{I}_{\dg}\Psi -\Psi_\dg}_0^2  =\dual{{\rm DN}_h(\Psi)\textrm{I}_{\dg}\Psi -\Psi_\dg, \boldsymbol{\chi}} 
	=\dual{{\rm DN}_h(\Psi)\textrm{I}_{\dg}\Psi -\Psi_\dg, \boldsymbol{\chi}}  + {\rm N}_\dg(\Psi_\dg, \textrm{I}_{\dg}\boldsymbol{\chi}) -{\rm N}_\dg(\Psi,\textrm{I}_{\dg} \boldsymbol{\chi})\notag \\&
	=  (A_\dg(\textrm{I}_{\dg}\Psi -\Psi, \boldsymbol{\chi}) 
	+A_\dg(\Psi - \Psi_\dg ,\boldsymbol{\chi} -\textrm{I}_{\dg} \boldsymbol{\chi})) +(C_\dg(\textrm{I}_{\dg}\Psi -\Psi, \boldsymbol{\chi})   
	+C_\dg(\Psi - \Psi_\dg ,\boldsymbol{\chi} -\textrm{I}_{\dg} \boldsymbol{\chi})) \notag\\&\quad
	+ (3B_\dg(\Psi,\Psi, \textrm{I}_{\dg}\Psi- \Psi_\dg, \boldsymbol{\chi}- \textrm{I}_{\dg}\boldsymbol{\chi} )  + 3 B_\dg(\Psi ,\Psi , \textrm{I}_{\dg}\Psi -\Psi, \textrm{I}_{\dg}\boldsymbol{\chi}))+ (B_\dg(\Psi_\dg,\Psi_\dg, \Psi_\dg , \textrm{I}_{\dg}\boldsymbol{\chi})  \notag\\& \quad  
	-3B_\dg(\Psi, \Psi, \Psi_\dg, \textrm{I}_{\dg}\boldsymbol{\chi})  +2B_\dg(\Psi,\Psi, \Psi, \textrm{I}_{\dg}\boldsymbol{\chi})) \notag\\&:= T_5+ T_6+T_7+T_8. \label{4.10}
	\end{align}
	Lemmas \ref{2.3.17}, \ref{2.3.31}, \ref{2.3.35.1} and \ref{5.6} and Theorem \ref{2.5.2}  yield 
	\begin{align*}
	T_5:= & A_\dg(\textrm{I}_{\dg}\Psi -\Psi, \boldsymbol{\chi}) 
	+A_\dg(\Psi - \Psi_\dg ,\boldsymbol{\chi} -\textrm{I}_{\dg} \boldsymbol{\chi})  
	\lesssim  h^2  \vertiii{\boldsymbol{\chi}}_2. \notag \\
	T_6:= & C_\dg(\textrm{I}_{\dg}\Psi -\Psi, \boldsymbol{\chi})  
	+C_\dg(\Psi - \Psi_\dg ,\boldsymbol{\chi} -\textrm{I}_{\dg} \boldsymbol{\chi})  
	\lesssim  \epsilon^{-2} h^2  \vertiii{\boldsymbol{\chi}}_2.
	\end{align*}
	Lemmas \ref{2.3.31} and \ref{2.3.35.1}  lead to
	\begin{align*}
	T_7&:=  3B_\dg(\Psi,\Psi, \textrm{I}_{\dg}\Psi- \Psi_\dg, \boldsymbol{\chi}- \textrm{I}_{\dg}\boldsymbol{\chi} ) +3 B_\dg(\Psi , \Psi , \textrm{I}_{\dg}\Psi -\Psi, \textrm{I}_{\dg}\boldsymbol{\chi}) 
	\\& \lesssim  \epsilon^{-2} ( \vertiii{\Psi}_2^2 \vertiii{ \textrm{I}_{\dg}\Psi- \Psi_\dg}_\dg \vertiii{\boldsymbol{\chi}- \textrm{I}_{\dg}\boldsymbol{\chi}}_\dg +  \vertiii{\Psi}_2^2 \vertiii{ \textrm{I}_{\dg}\Psi- \Psi}_0 \vertiii{\textrm{I}_{\dg}\boldsymbol{\chi}}_0 )
	\lesssim \epsilon^{-2} h^2 \vertiii{\boldsymbol{\chi}}_2 .
	\end{align*}
	Set $\mathbf{e}_3=  \Psi_{\dg }-\Psi $ and estimate  $T_8$ as in $T_4$ of Theorem \ref{thm2.5.1} and use Theorem \ref{2.5.2} to obtain
	\begin{align*}
	T_8& :=2B_\dg(\Psi, \Psi, \Psi, \textrm{I}_{\dg}\boldsymbol{\chi}) + B_\dg(\Psi_\dg,\Psi_\dg, \Psi_\dg , \textrm{I}_{\dg}\boldsymbol{\chi}) -3B_\dg(\Psi,\Psi, \Psi_\dg, \textrm{I}_{\dg}\boldsymbol{\chi}) \\& \lesssim \epsilon^{-2} \vertiiidg{\mathbf{e}_3 }^2(\vertiiidg{\mathbf{e}_3 }+\vertiiidg{\Psi })\vertiii{\boldsymbol{\chi}}_2 \lesssim\epsilon^{-2} h^2 (h+1) \vertiii{\boldsymbol{\chi}}_2.
	\end{align*}
	A combination of the estimates for $T_5$, $T_6$, $T_7$ and $T_8$ in  \eqref{4.10} and a use of Lemma \ref{xiregularity} yields
	\begin{align}\label{5.5}
	\vertiii{ \textrm{I}_{\dg}\Psi -\Psi_\dg}_0  \lesssim h^2 (1+\epsilon^{-2} )^2.
	\end{align} 
	A use of \eqref{5.5} and triangle inequality yields
	\begin{align*}
	\vertiii{ \Psi -\Psi_\dg}_0 \leq \vertiii{\Psi- \textrm{I}_{\dg}\Psi }_0+\vertiii{ \textrm{I}_{\dg}\Psi -\Psi_\dg}_0  \lesssim h^2(1+ (1+\epsilon^{-2} )^2),
	\end{align*} 
	where the constants suppressed in $"\lesssim"$ depends on  $\vertiii{\Psi}_2$, $\beta_0$, $C_A$, $C_S$, $C_P$ and $C_I$. 
	This completes the proof of Theorem  \ref{sipg}.
\end{proof}	
	\section{Numerical Implementation} \label{numerical}
	In Subsection \ref{newton}, we prove Theorem \ref{2.1.9.1}. The second subsection discusses results of numerical experiments that justify the theoretical estimates. 
	\subsection{Convergence of Newton's method}\label{newton}
	Now we establish that Newton iterates in \eqref{eq:3.1.1} converges quadratically to the discrete solution. The proof follows by modification of the approach used in \cite{Gaurang}. While the linearized operator in \cite{Gaurang} has a system of bilinear and trilinear forms; in this case we have system of bilinear and quadrilinear forms that leads to modification in the bounds. Moreover, the choice of the radius of the ball $\rho_1,$ in which the initial guess $\Psi_{\rm dG}^0$ needs to be chosen so as the Newton's method converges depends on the non-linearity and needs to be chosen carefully. The effect of the parameter $'\epsilon'$ is also considered in this proof.   		
	\begin{proof}[\textbf{Proof of Theorem \ref{2.1.9.1}}]
		Following the proof of 
		Lemma \ref{2.4.12}, for a sufficiently small choice of the discretization parameter $h=O(\epsilon^2 ),$
		 there exists a sufficiently small constant $\delta > 0$ independent of $h$ such that for each $Z_{\dg} \in\V_h$ that satisfies $\vertiii{Z_{\dg} -\textrm{I}_{\dg}\Psi}_\dg \leq \delta$,  the bilinear form 
		\begin{align}\label{2.1.9.2}
	\dual{{\rm DN}_h(Z_{\dg}) \Theta_{\dg},\Phi_{\dg}}=A_\dg(\Theta_{\dg},\Phi_{\dg})+3B_\dg(Z_{\dg},Z_{\dg},\Theta_{\dg},\Phi_{\dg} )+ C_\dg(\Theta_{\dg},\Phi_{\dg})
		\end{align}
		satisfies discrete inf-sup condition given by
		\begin{align}\label{6.2}
		0< \frac{\beta_0}{2} \leq \inf_{\substack{\Theta_{\rm dG} \in \V_h \\  \vertiii{\Theta_{\rm dG}}_{\rm dG}=1}} \sup_{\substack{\Phi_{\rm dG} \in \V_h \\  \vertiii{\Phi_{\rm dG}}_{\rm dG}=1}}\dual{{\rm DN}_h(Z_{\dg}) \Theta_{\dg},\Phi_{\dg}}.
		\end{align}
		 For a sufficiently small choice of the discretization parameter $h=O(\epsilon^{2+\alpha} )$ with $\alpha>0$, \eqref{5.9} leads to  
		\begin{align}\label{2.1.9.3}
		\vertiii{\Psi_{\dg}-\textrm{I}_{\dg}\Psi}_\dg\leq 2C_3h \leq \frac{\delta}{2}.
		\end{align}  
		Assume that the initial guess $\Psi_{\dg }^0$ satisfies $\vertiii{\Psi_{\dg}^0-\Psi_{\dg}}_\dg \leq \rho_1  \leq \frac{\delta}{2}.$
		A use of this and \eqref{2.1.9.3} along with a triangle inequality leads to
		\begin{align}\label{2.1.9.4}
		\vertiii{\Psi_{\dg}^0-\textrm{I}_{\dg}\Psi}_\dg \leq \vertiii{\Psi_{\dg}^0-\Psi_{\dg}}_\dg+ \vertiii{\Psi_{\dg}-\textrm{I}_{\dg}\Psi}_\dg  \leq \delta.
		\end{align}
        Therefore, $Z_\dg= \Psi_{\dg}^0$ in \eqref{2.1.9.2} leads to the discrete inf-sup condition for $\dual{DN_h(\Psi_{\dg}^{0})\cdot,\cdot }$, and this implies that there exists a unique $\Psi_{\dg }^1 \in \V_h$ ( which is the first Newton iterate in \eqref{eq:3.1.1} ) satisfying the well-posed system 
        \begin{align}\label{2.1.9.5}
        \dual{{\rm DN}_h(\Psi_{\dg}^{0}) \Psi_{\dg}^1,\Phi_{\dg} }
        =2B_{\rm dG}(\Psi_{\dg}^{0},\Psi_{\dg }^{0},\Psi_{\dg }^{0},\Phi_{\dg})+L_{\rm dG}(\Phi_{\rm dG})  \text{ for all } \Phi_{\dg} \in \V_h .
        \end{align} 
        The discrete inf-sup condition \eqref{6.2} implies the existence of a $\Phi_{\dg} \in \V_h$ with $\vertiiidg{\Phi_{\dg}}=1$ such that 
        \begin{align}\label{6.5}
        \frac{\beta_0}{2} \vertiii{\Psi_{\dg}^1-\Psi_{\dg}}_\dg \leq \dual{{\rm DN}_h(\Psi_{\dg}^{0}) (\Psi_{\dg}^1-\Psi_{\dg}),\Phi_{\dg} }.
        \end{align} 
        A use of \eqref{2.1.9.2}, \eqref{2.1.9.5} and \eqref{2.3.13} in \eqref{6.5} yields 
        \begin{align}\label{6.6}
        \frac{\beta_0}{2} \vertiii{\Psi_{\dg}^1-\Psi_{\dg}}_\dg &\leq 2B_{\dg}(\Psi_{\dg}^{0},\Psi_{\dg}^{0},\Psi_{\dg}^{0},\Phi_{\dg})+L_{\rm dG}(\Phi_{\rm dG}) -A_{\dg}(\Psi_{\dg},\Phi_{\dg})
        \notag\\ &\quad -3B_{\dg}(\Psi_{\dg}^{0},\Psi_{\dg}^{0},\Psi_{\dg},\Phi_{\dg})-C_{\dg}(\Psi_{\dg},\Phi_{\dg}) \notag			
        \notag\\ &=2B_{\dg}(\Psi_{\dg}^{0},\Psi_{\dg}^{0},\Psi_{\dg}^{0},\Phi_{\dg})-3B_{\dg}(\Psi_{\dg}^{0},\Psi_{\dg}^{0},\Psi_{\dg},\Phi_{\dg})\notag\\&\quad+B_{\dg}(\Psi_{\dg},\Psi_{\dg},\Psi_{\dg},\Phi_{\dg}).
        \end{align}
        The right hand side of \eqref{6.6} is estimated analogous to $T_4$ in Theorem \ref{thm2.5.1}. This followed by a use of the triangle inequality and \eqref{2.1.9.3} leads to
        \begin{align*}
        \vertiii{\Psi_{\dg}^1-\Psi_{\dg}}_\dg
        &\lesssim \epsilon^{-2}  \vertiii{\Psi_{\dg}^{0}-\Psi_{\dg}}_\dg^2(\vertiii{\Psi_{\dg}^{0}-\Psi_{\dg}}_\dg+ \vertiii{\Psi_{\dg}^{0}}_\dg)\\& \lesssim \epsilon^{-2}  \vertiii{\Psi_{\dg}^{0}-\Psi_{\dg}}_\dg^2(2\vertiii{\Psi_{\dg}^{0}-\Psi_{\dg}}_\dg+ \vertiii{\Psi_{\dg}-\textrm{I}_{\dg}\Psi}_\dg+\vertiii{\textrm{I}_{\dg}\Psi}_\dg)\notag\\&\lesssim \epsilon^{-2}  \vertiii{\Psi_{\dg}^{0}-\Psi_{\dg}}_\dg^2(\rho_1+h +1)\leq C_5\epsilon^{-2}  \vertiii{\Psi_{\dg}^{0}-\Psi_{\dg}}_\dg^2(\rho_1 +1), 
        \end{align*}
        where the constant $C_5$ depends on $\vertiii{\Psi}_2,$ $\beta_0$, $C_3,$ $C_P$ and $C_S.$ A choice of $\rho_1 < \min(\frac{\delta}{2},\frac{-1+\sqrt{1+2\epsilon^2C_5^{-1}}}{2}) $ yields
        $
        \vertiii{\Psi_{\dg}^1-\Psi_{\dg}}_{\dg}
        \leq
        \frac{1}{2}\vertiii{\Psi_{\dg}^{0}-\Psi_{\dg}}_{\dg}
        \leq  \frac{\rho_1}{2}$. A use of this estimate, \eqref{2.1.9.3} and triangle inequality yields  $
        \vertiii{\Psi_{\dg}^1-\textrm{I}_{\dg}\Psi}_{\dg}
        \leq
        \vertiii{\Psi_{\dg}^1-\Psi_{\dg}}_{\dg}+ \vertiii{\Psi_{\dg}-\textrm{I}_{\dg}\Psi}_{\dg}
        \leq \delta .$
         Also, $
        \vertiii{\Psi_{\dg}^1-\Psi_{\dg}}_{\dg}
        \leq C_6\vertiii{\Psi_{\dg}^{0}-\Psi_{\dg}}_{\dg}^2 
        ,$ where $C_6$ depends on $C_5$, $\rho_1$ and $\epsilon$.         
        \medskip
        
		\noindent Therefore, $\vertiii{\Psi_{\dg}^{j}-\textrm{I}_{\dg}\Psi}_\dg \leq \delta$ and $\vertiii{\Psi_{\dg}^j-\Psi_{\dg}}_{\dg}\leq\frac{\rho_1}{2^j}$  satisfies for $j=1.$  Assume that this holds for some $j\in \mathbb{N}.$ Then, $Z_\dg= \Psi_{\dg}^{j}$ in \eqref{2.1.9.2} leads to the discrete inf-sup condition for $\dual{{\rm DN}_h(\Psi_{\dg}^{j})\cdot,\cdot }$ which implies that there exists a unique $\Psi_{\dg }^{j+1} \in \V_h$ satisfying the well-posed system 
		\begin{align*}
		\dual{{\rm DN}_h(\Psi_{\dg}^{j}) \Psi_{\dg}^{j+1},\Phi_{\dg} }
		=2B_{\rm dG}(\Psi_{\dg}^{j},\Psi_{\dg }^{j},\Psi_{\dg }^{j},\Phi_{\dg})+L_{\rm dG}(\Phi_{\rm dG})  \text{ for all } \Phi_{\dg} \in \V_h .
		\end{align*}
Then, a use of \eqref{6.2} and  following the proof for $j=1$, we obtain
\begin{align*}
		\vertiii{\Psi_{\dg}^{j+1}-\Psi_{\dg}}_\dg
		\leq C_5\epsilon^{-2}  \vertiii{\Psi_{\dg}^{j}-\Psi_{\dg}}_\dg^2(\rho_1 +1). 
		\end{align*}
 		Since $\rho_1 < \min(\frac{\delta}{2},\frac{-1+\sqrt{1+2\epsilon^2C_5^{-1}}}{2}) $,
		$
		\vertiii{\Psi_{\dg}^{j+1}-\Psi_{\dg}}_{\dg}
		\leq
		\frac{1}{2}\vertiii{\Psi_{\dg}^{j}-\Psi_{\dg}}_{\dg}
		\leq  \frac{\rho_1}{2^{j+1}} $ and $
		\vertiii{\Psi_{\dg}^{j+1}-\textrm{I}_{\dg}\Psi}_{\dg}
		\leq\delta $  with a quadratic convergence rate given by
		$
		\vertiii{\Psi_{\dg}^{j+1}-\Psi_{\dg}}_{\dg}
		\leq C_6\vertiii{\Psi_{\dg}^{j}-\Psi_{\dg}}_{\dg}^2 
		.$  
		This completes the proof using mathematical induction.
	\end{proof} 
\subsection{Numerical experiments}\label{numerical experiment}
In this subsection, the computational error and convergence rate of discrete solutions for dGFEM are illustrated for some benchmark problems. We study the convergence of discrete solutions, for various values of $\epsilon$. These numerical experiments have been implemented using FEniCS \cite{fenics} library and the results support the theoretical findings (for the details of implementation procedure see \ref{algorithm}). Let $e_i$ and $h_i$ be the error and the mesh parameter at the $i$-th level, respectively. The $i$-th level experimental order of convergence is defined by $\displaystyle \alpha_i:=log(e_n/e_{i})/log(h_n/h_{i})$ for $ i=1, \ldots , n-1$ and $n $ corresponds to the final iteration considered in numerical experiments.
\begin{example}\label{Polynomial_example}
	For the problem \eqref{2.3.1.2}, set $\Omega=(0,1)\times (0,1)$ and the parameter value $\epsilon=0.2$. Compute the right hand side  for the manufactured exact solution  $u=x(1-x)y(1-y)$ and $v=x(1-x)y(1-y).$ 
\end{example}
\noindent We discretise the domain $\Omega=(0,1)\times (0,1)$ into triangles and in the uniform refinement process, each triangle $T$ is divided into four similar triangles. 
Tables \ref{P1_polynomial}, \ref{P2_polynomial} and \ref{P3_polynomial} present the numerical errors and orders of convergence in energy and $\mathbf{L}^2$ norms computed using piecewise polynomials of degree $1,$ $2$ and $3$, respectively.
\begin{table}[H]
	\centering
	\begin{tabular}{||c|c |c| c| c|c||} 
		\hline
		$h$    &      $\vertiiidg{\Psi-\Psi_{\dg}^{i}}$ &  Order &  $\vertiii{\Psi-\Psi_{\dg}^{i}}_{\mathbf{L}^2}$ &  Order  \\ [0.5ex] 
		\hline\hline
		0.3535      &  0.69481292E-1   &1.10439646  & 0.37102062E-2   & 2.13551126 \\	
		\hline
		0.1767     &  0.29094563E-1   & 1.06282552  & 0.74601448E-3 & 2.08646636  \\		
		\hline
		0.0883      &  0.13383488E-1  & 1.04108837  & 0.16427963E-3  & 2.04993501 \\
		\hline
		0.0441     &  0.64047595E-2 &  1.02761427 & 0.38801373E-4 &  2.03044486 \\
		\hline
		0.0220    &   0.31296010E-2 & 1.01899219 &  0.94539471E-5  & 2.02007895 \\
		\hline
		0.0141     &  0.19860395E-2 & - & 0.38377918E-5 &  -  \\
		\hline
	\end{tabular}
	\caption{ Numerical errors and orders of convergence in $\dg$ and $\mathbf{L}^2$ norms using piecewise linear polynomials.  
	}
\label{P1_polynomial}		
\end{table}	
\begin{table}[H]
	\centering
	\begin{tabular}{||c|c |c| c| c|c||} 
		\hline
		$h$    &      $\vertiiidg{\Psi-\Psi_{\dg}^{i}}$ &  Order &  $\vertiii{\Psi-\Psi_{\dg}^{i}}_{\mathbf{L}^2}$ &  Order  \\ [0.5ex] 
		\hline\hline
		0.3535      &  0.20812609E-1   &2.25360832 & 0.59563518E-3   & 3.22432824  \\	
		\hline
		0.1767      &  0.35369466E-2   & 2.17037967  & 0.50612758E-4   & 3.13307115  \\		
		\hline
		0.0883      &  0.71656832E-3  & 2.12009381  & 0.53034861E-5   & 3.08714692  \\
		\hline
		0.0441     &  0.15828049E-3 &  2.08449035  & 0.60454668E-6 &  3.05924399  \\
		\hline
		0.0220     &   0.36909294E-4 & 2.05973780 & 0.71945319E-7  &  3.04117003  \\
		\hline
		0.0141     &  0.14720321E-4 & - & 0.18516670E-7  &  -   \\
		\hline
	\end{tabular}
	\caption{ Numerical errors and orders of convergence in $\dg$ and $\mathbf{L}^2$ norms using piecewise quadratic polynomials.  
	}
	\label{P2_polynomial}
\end{table}	
\begin{table}[H]
	\centering
	\begin{tabular}{||c|c |c| c| c|c||} 
		\hline
		$h$    &      $\vertiiidg{\Psi-\Psi_{\dg}^{i}}$ &  Order &  $\vertiii{\Psi-\Psi_{\dg}^{i}}_{\mathbf{L}^2}$ &  Order  \\ [0.5ex] 
		\hline\hline
	   0.3535   &  0.44002858E-2   &2.76328893  & 0.79005568E-4   & 3.7473338  \\	
		\hline
		0.1767    &  0.81936582E-3   &2.85612226  & 0.76031244E-5   & 3.8488866 \\		
		\hline
		0.0883      &  0.12772385E-3  & 2.92217824 & 0.59938677E-6   & 3.9184264  \\
		\hline
		0.0441    &  0.17487379E-4 & 2.95474574  & 0.41209120E-7 & 3.9524773 \\
		\hline
		0.0220    &   0.22702291E-5 & 2.96925442 & 0.26800851E-8  & 3.9677981  \\
		\hline
		0.0141     &  0.60334919E-6 & - & 0.45615227E-9  &  -   \\
		\hline
	\end{tabular}
	\caption{ Numerical errors and orders of convergence in $\dg$ and $\mathbf{L}^2$ norms using piecewise cubic polynomials.  
	} 
	\label{P3_polynomial}		
\end{table}	
\begin{rem}
This numerical example verifies that  the theoretical convergence rates obtained in energy norm (resp. $\mathbf{L}^2$ norm)  are $1$, $2$ and $3$ (resp. $2,$ $3$ and $4$) for piecewise $P_1$, $P_2$ and $P_3$ polynomial approximations. Improved rate of convergences suggest an improvement in the theoretical estimates if the exact solution is smooth. The convergence analysis for $h$-$p$ dGFEM taking into account the effect of $h$-$\epsilon$-$p$ dependency is a problem of future interest.
\end{rem}
\begin{example}\label{Luo_exampple}
	Consider the problem \eqref{2.3.1.2} in $\Omega=(0,1)\times (0,1).$ We  will approximate the system \eqref{2.3.1.2} using the Dirichlet boundary condition  \cite{MultistabilityApalachong} given by 
	\begin{equation*} 
	\mathbf{g}=
	\begin{cases} 
	(\textit{T}_{d}(x),0) & \text{on}\,\,\,\, y=0 \,\,\,\,\text{and} \,\,\,\, y=1, \\
	(- \textit{T}_d(y),0)  & \text{on}\,\,\,\, x=0 \,\,\,\,  \text{and} \,\,\,\,  x=1, 
	\end{cases}
	\end{equation*}  
	where the parameter $d=3 \epsilon$ and the trapezoidal shape function $\textit{T}_d:[0,1]\rightarrow {\mathbb{R}}$ is defined to be
	\begin{equation} \label{trapizoidal function}
	\textit{T}_d(t)=
	\begin{cases} 
	t/d, & 0 \leq t \leq d, \\
	1, &  d \leq  t \leq 1- d,  \\
	(1-t)/d, & 1- d \leq t \leq 1.
	\end{cases}
	\end{equation}
\end{example}
\noindent This example is motivated by the benchmark problem studied in \cite{MultistabilityApalachong}. There are two classes of stable experimentally observable configurations  [see Figure \eqref{fig diagonal:}]: the
diagonal states for which the director is aligned along the cell/square diagonals and the rotated states, for which the directors rotate by $\pi$-radians across the width of cell.	
\begin{figure}[H]
	\centering
	\subfloat[]{\includegraphics[scale = 0.35]{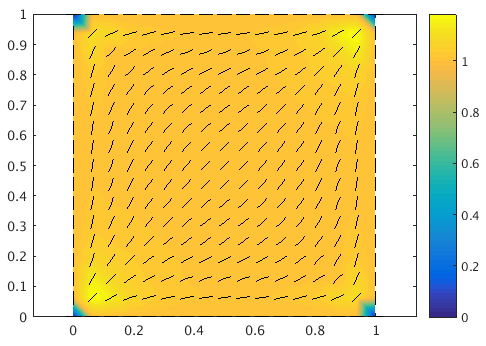}\label{D1}} 
	\hspace{0.4cm}
	\subfloat[]{\includegraphics[scale = 0.35]{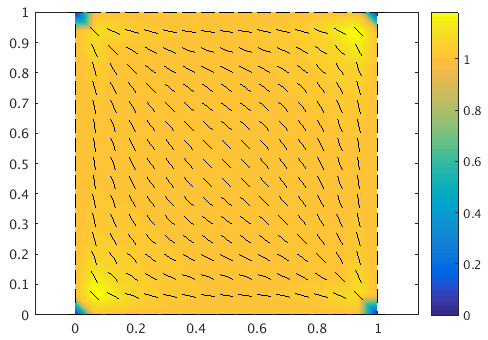}\label{D2}}
	\hspace{0.4cm}
	\subfloat[]{\includegraphics[scale = 0.35]{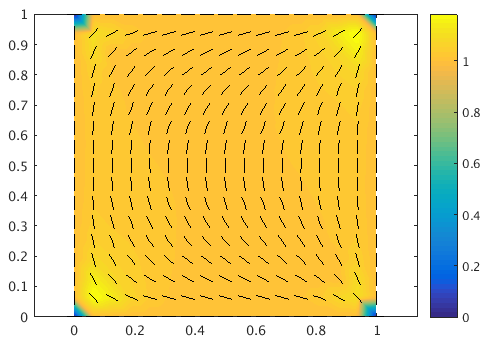}\label{R1}} 
	\hspace{0.4cm}
	\subfloat[]{\includegraphics[scale = 0.35]{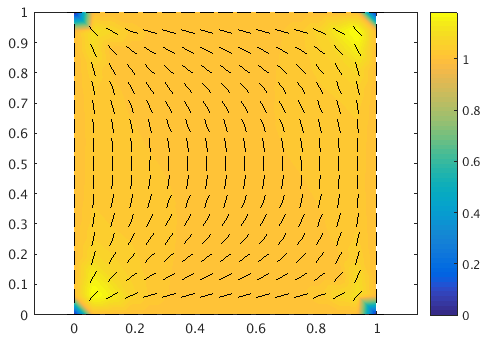}\label{R2}}
	\hspace{0.4cm}
	\subfloat[]{\includegraphics[scale = 0.35]{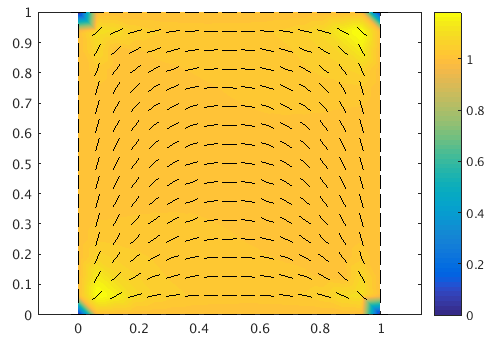}\label{R3}} 
	\hspace{0.4cm}
	\subfloat[]{\includegraphics[scale = 0.35]{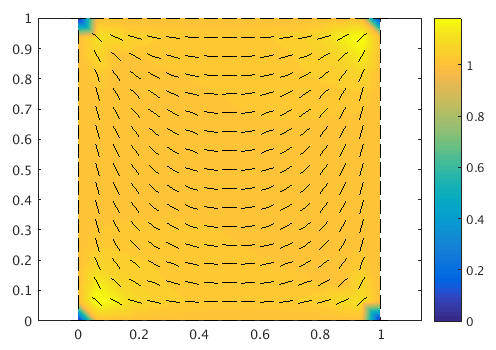}\label{R4}}
	\caption{ Diagonally stable molecular alignments: \protect \subref{D1} D1, \protect \subref{D2} D2 states and rotated stable molecular alignments: \protect \subref{R1} R1, \protect \subref{R2} R2, \protect \subref{R3} R3, \protect \subref{R4} R4 states.}
	\label{fig diagonal:}
\end{figure} 
\begin{tiny}
	\begin{table}[H]
		\centering
		\begin{tabular}{|c | c | c | c | c |}
			\hline
			Solution & $x=0$ & $x=1$ & $y=0$ & $y=1$\\ 
			\hline
			D1 & $\pi/2$ & $\pi/2$ & 0 & 0 \\
			D2 & $\pi/2$ & $\pi/2$ & $\pi$ & $\pi$ \\
			R1 & $\pi/2$ & $\pi/2$ & $\pi$ & 0 \\
			R2 & $\pi/2$ & $\pi/2$ & 0 & $\pi$ \\
			R3 & $3\pi/2$ & $\pi/2$ & $\pi$ & $\pi$ \\
			R4 & $\pi/2$ & $3\pi/2$ & $\pi$ & $\pi$ \\
			\hline
		\end{tabular}
		\caption{Initial conditions for Newton's method.}
		\label{table:initial}
	\end{table}
\end{tiny}
We use Newton's method to approximate the six different solutions $\Psi=(u,v)$ corresponding to the D1, D2, R1, R2, R3 and R4 states. We compute six different initial conditions by solving the Laplace equation \cite{MultistabilityApalachong} with Dirichlet  boundary conditions as specified in Table \ref{table:initial}. For example, in the D1 case, the system  $\displaystyle \Delta\theta=0 \text{ in } \Omega, \theta(x,0)=\theta(x,1)=0 \text{ and } \theta(0,y)=\theta(1,y)=\frac{\pi}{2}$ is solved using dGFEM. Then the corresponding initial condition for Newton's method (for the Landau-de Gennes mdodel) is defined to be
\begin{align}\label{initial_guess}
\Psi^0_{\rm dG}= s(\cos2\theta,\sin2\theta),
\end{align}
where $s=1$ at the interior nodes and $s=\abs{\mathbf{g}}$ at the boundary nodes.
We obtain the expected theoretical convergence rates using piecewise linear polynomial in dG and $\mathbf{L}^2$ norm as $O(h)$ and $O(h^2)$, respectively. 
 In Tables \ref{table:2} and \ref{table:3}, we record the computational errors and convergence rates of solutions for the diagonal state D1 and rotated state R1 respectively for $\epsilon= 0.02$. Similar errors and optimal convergence rates are obtained for D2, R2, R3 and R4 solutions. In Figure~\ref{fig diagonal:}, we plot the converged director plots and scalar order parameter for the six states, D1, D2, R1, R2, R3 and R4, respectively. In Figures~\ref{solution_D} - \ref{solution_R34}, we plot the level curves of the corresponding converged D1 and D2 diagonal, R1, R2, R3 and R4 rotated solutions.
\begin{table}[H]
	\centering
	\begin{tabular}{||c|c |c| c| c|c||} 
		\hline
		$h$    &     Energy    &  $\vertiiidg{\Psi_{\dg}^{n}-\Psi_{\dg}^{i}}$ &  Order  & $\vertiii{\Psi_{\dg}^{n}-\Psi_{\dg}^{i}}_{\mathbf{L}^2}$ &  Order  \\ [0.5ex] 
		\hline\hline
		0.0883      & 77.80650525 & 3.42185356  &0.90770236  & 0.23846079E-1   & 1.75592834 \\
		\hline
		0.0441     &  77.90383430 &  1.90966722 &0.94082516   & 0.78385873E-2 & 1.83134924   \\
		\hline
		0.0220     & 77.92229141   &1.00706128 & 0.95848055 & 0.24465831E-2  & 1.98287313   \\
		\hline
		0.0110     & 77.94112012  &  0.51823233 &- & 0.61895017E-3  & -  \\
		\hline
	\end{tabular}
	\caption{ Numerical energy, errors and convergence rates for  D1 solution in $\dg$ and $\mathbf{L}^2$ norm. 
	}
	\label{table:2}
\end{table}

\begin{table}[H]
	\centering
	\begin{tabular}{||c|c |c| c| c|c||} 
		\hline
		$h$    &     Energy    &  $\vertiiidg{\Psi_{\dg}^{n}-\Psi_{\dg}^{i}}$ &  Order &  $\vertiii{\Psi_{\dg}^{n}-\Psi_{\dg}^{i}}_{\mathbf{L}^2}$ &  Order  \\ [0.5ex] 
		\hline\hline
		0.0883     & 86.44084273 &3.46656699   &0.90994382   & 0.26417589E-1   & 1.79599363  \\
		\hline
		0.0441  &  86.53931303 &  1.92787441 &  0.94166745& 0.82372577E-2 &  1.85335905  \\
		\hline
	    0.0220   & 86.55785529  &1.01547693 & 0.95848131& 0.25178324E-2  &  1.99673627   \\
		\hline
		0.0110     & 86.57670525  &0.52256273 & -& 0.63088371E-3  &   -   \\
		\hline
	\end{tabular}
	\caption{ Numerical energy, errors and convergence rates for  R1 solution in $\dg$ and $\mathbf{L}^2$ norm. 
	}
	\label{table:3}		
\end{table}
\begin{figure}[H]
	\centering
	\subfloat[]{\includegraphics[scale = 0.20]{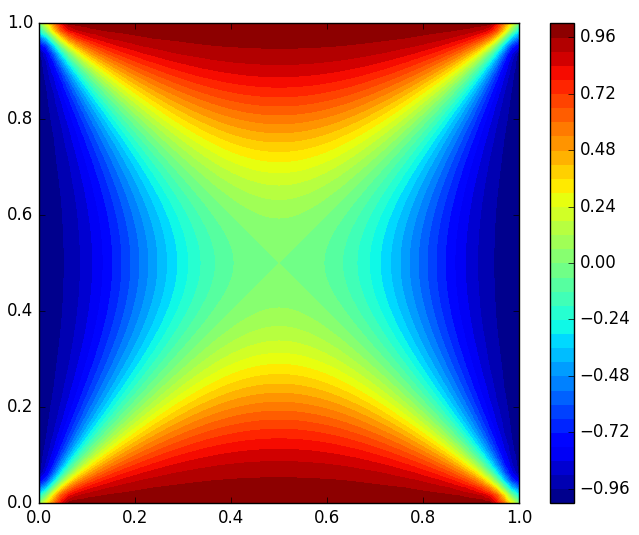}\label{uD1}} 
	\hspace{0.2cm}
	\subfloat[]{\includegraphics[scale = 0.20]{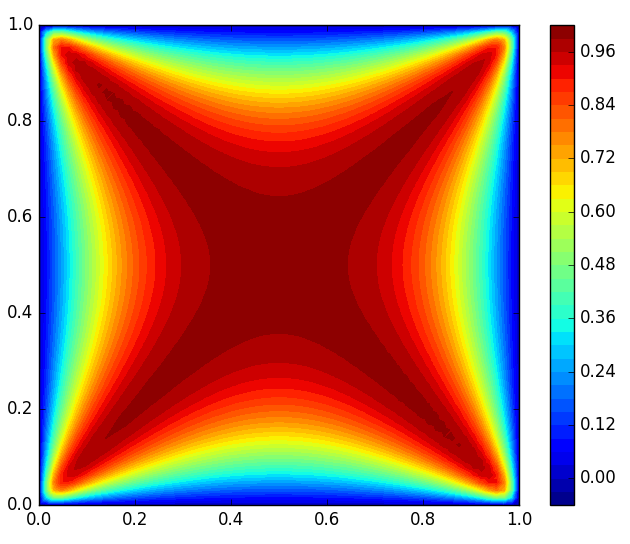}\label{vD1}}
	\hspace{0.2cm}
	\subfloat[]{\includegraphics[scale = 0.20]{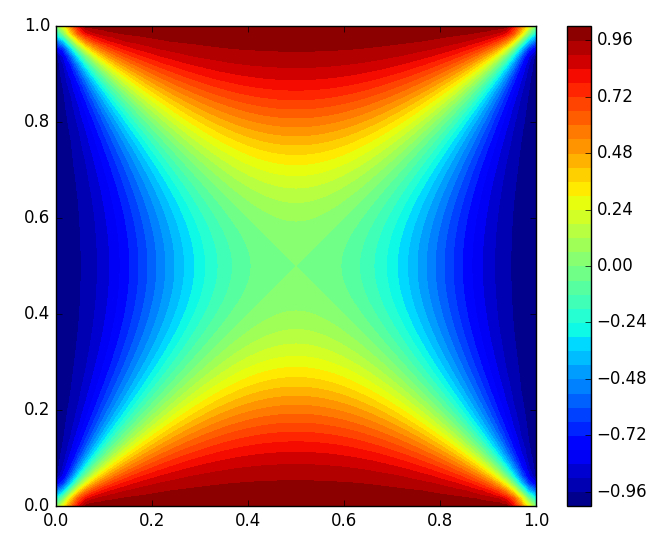}\label{uD2}} 
	\hspace{0.2cm}
	\subfloat[]{\includegraphics[scale = 0.20]{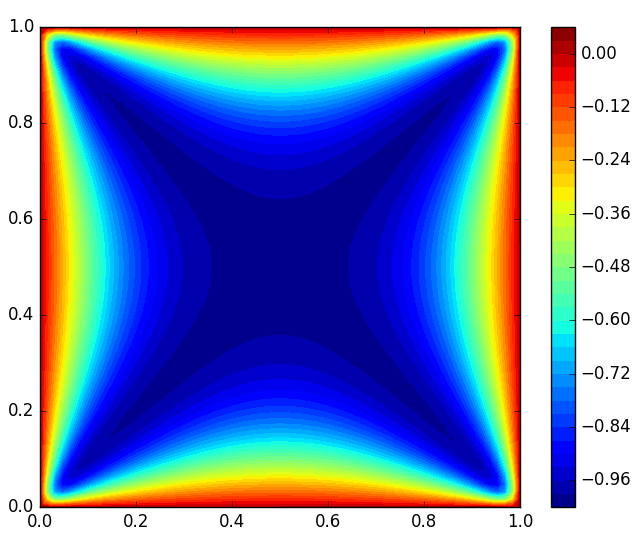}\label{vD2}}
	\caption{ D1 solution $\Psi_{\rm dG}$ components: \protect \subref{uD1} $u_{\rm dG}$, \protect\subref{vD1} $v_{\rm dG}$ and D2 solution $\Psi_{\rm dG}$ components: \protect \subref{uD2} $u_{\rm dG}$, \protect \subref{vD2} $v_{\rm dG}$. }
	\label{solution_D}
\end{figure} 
\begin{figure}[H]
	\centering
	\subfloat[]{\includegraphics[scale = 0.20]{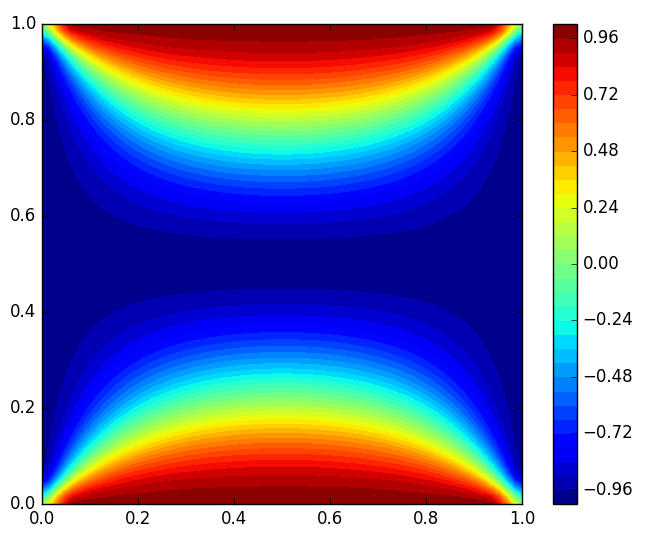}\label{uR1}} 
	\hspace{0.2cm}
	\subfloat[]{\includegraphics[scale = 0.20]{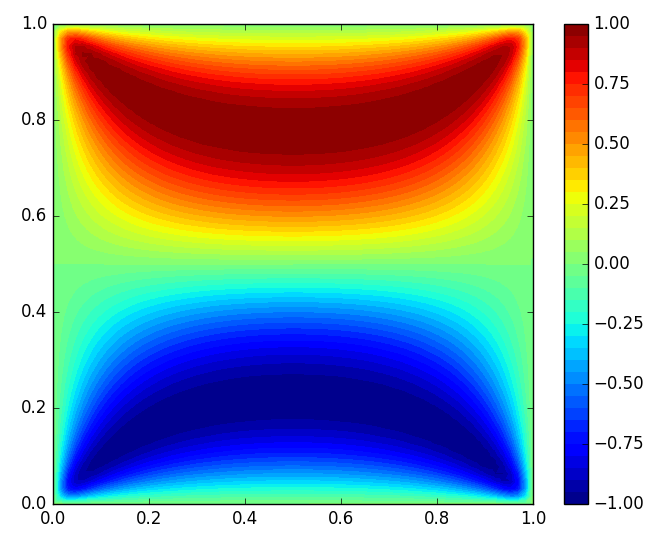}\label{vR1}}
	\hspace{0.2cm}
	\subfloat[]{\includegraphics[scale = 0.20]{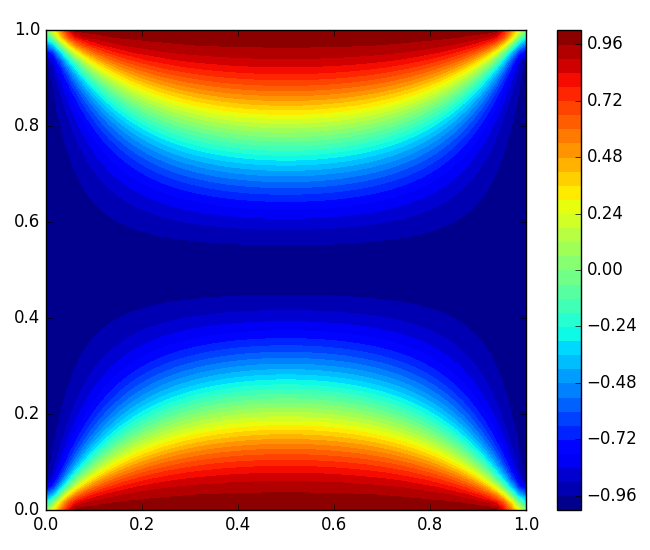}\label{uR2}} 
	\hspace{0.2cm}
	\subfloat[]{\includegraphics[scale = 0.20]{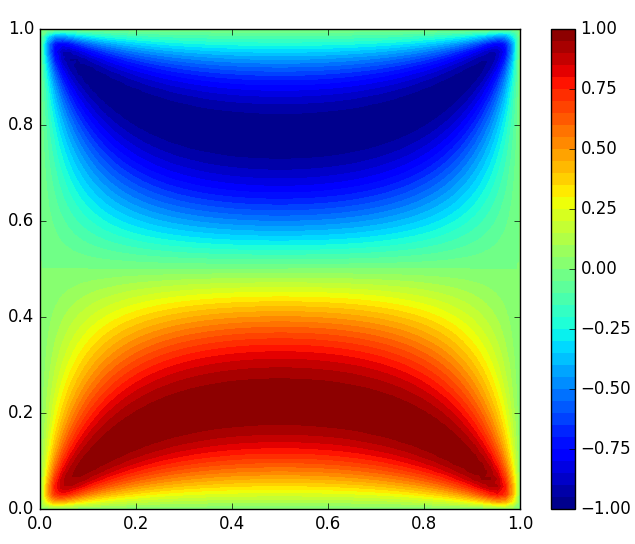}\label{vR2}}
	\caption{ R1 solution $\Psi_{\rm dG}$ components: \protect \subref{uR1} $u_{\rm dG}$, \protect\subref{vR1} $v_{\rm dG}$ and R2 solution $\Psi_{\rm dG}$ components: \protect \subref{uR2} $u_{\rm dG}$, \protect \subref{vR2} $v_{\rm dG}$. }
	\label{solution_R12}
\end{figure} 
\begin{figure}[H]
	\centering
	\subfloat[]{\includegraphics[scale = 0.20]{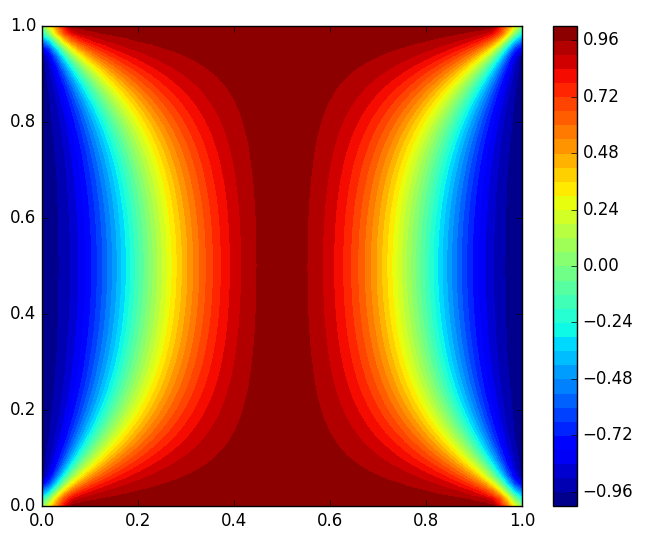}\label{uR3}} 
	\hspace{0.2cm}
	\subfloat[]{\includegraphics[scale = 0.20]{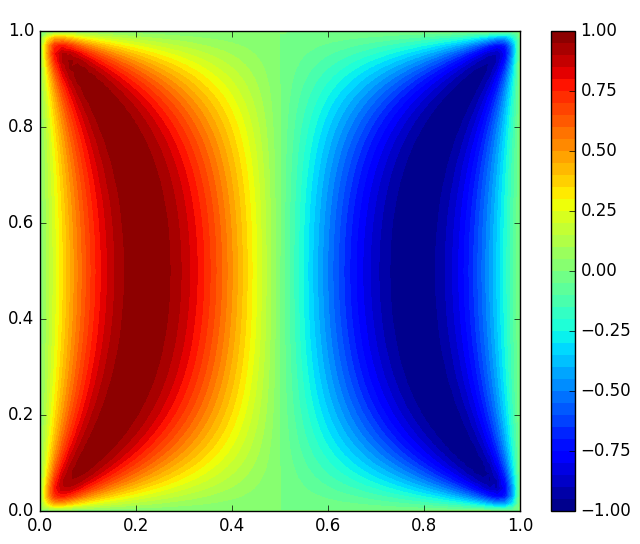}\label{vR3}}
	\hspace{0.2cm}
	\subfloat[]{\includegraphics[scale = 0.20]{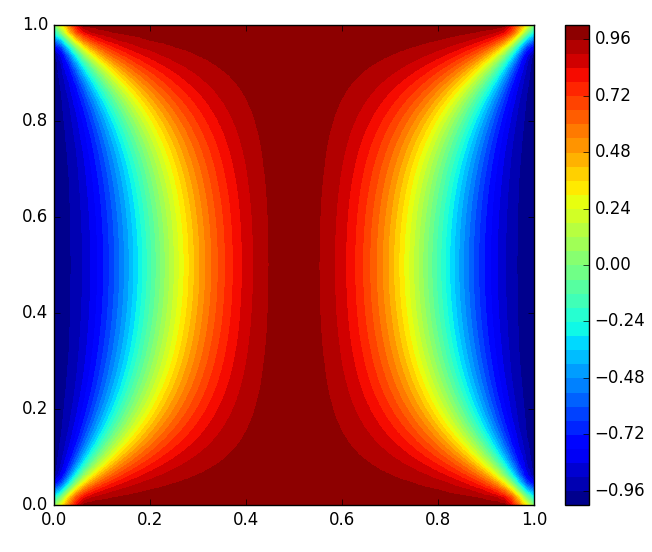}\label{uR4}} 
	\hspace{0.2cm}
	\subfloat[]{\includegraphics[scale = 0.20]{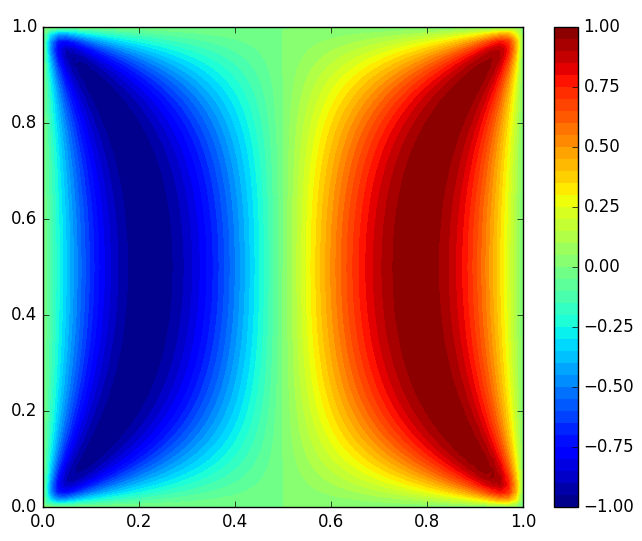}\label{vR4}}
	\caption{ R3 solution $\Psi_{\rm dG}$ components: \protect \subref{uR3} $u_{\rm dG}$, \protect\subref{vR3} $v_{\rm dG}$ and R4 solution $\Psi_{\rm dG}$ components: \protect \subref{uR4} $u_{\rm dG}$, \protect \subref{vR4} $v_{\rm dG}$. }
	\label{solution_R34}
\end{figure} 	
\begin{figure}[h!]
	\centering
	\includegraphics[height=7cm, width=8cm]{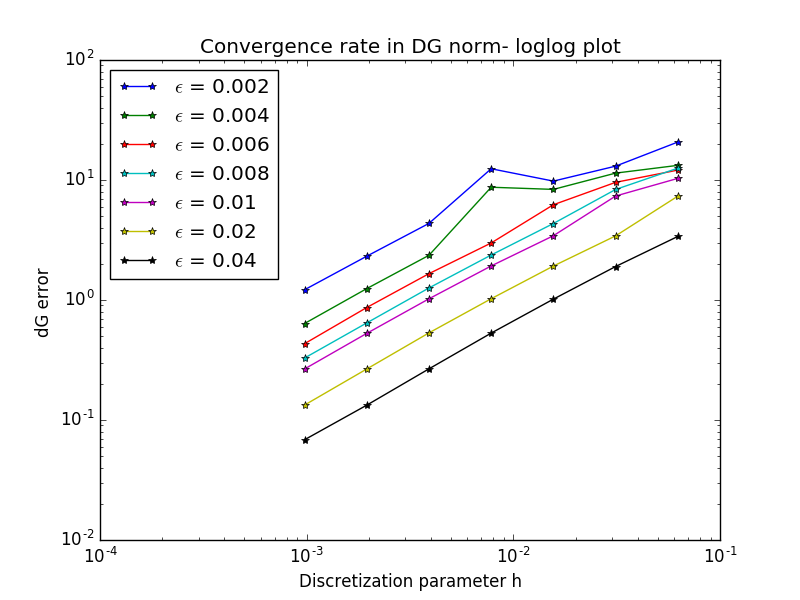}
	\caption{Behavior of errors for different values of discretization parameter $h$ for various values of $\epsilon$.  }
	\label{epsilon_dependency}
\end{figure}
Figure \ref{epsilon_dependency} plots the dG error vs the discretization parameter for various values of $\epsilon$. Note that errors are sensitive to the choice of discretization parameter as $\epsilon$ decreases. 
It is difficult to verify the exact dependence of $h$ and $\epsilon$ from these numerical results, except that as $\epsilon \rightarrow 0$, smaller mesh-sizes are needed for convergence.
\begin{rem}
	 In \cite{MultistabilityApalachong}, the authors study the model problem of nematic-filled square wells in the conforming FEM set up and present numerical errors and convergence rates for conforming FEM, in $\mathbf{H}^1$ and $\mathbf{L}^2$ norm to be of $O(h)$ and $O(h^2),$ respectively, for the six different stable states.  However, they do not study the convergence trends as a function of $\epsilon$ and do not establish the theoretical order of convergence.
\end{rem}
\begin{example}\label{Annular_example}
	Consider an annular domain filled with  the compound $5CB$, a standard liquid crystal material \cite{Dassbach_thesis}.
	We consider the weak formulation of the Euler-Lagrange equations for a reduced two-dimensional Landau-de Gennes energy as in  \cite{Dassbach_thesis}
	\begin{align}\label{annular_equation}
	-\Delta \Psi + (-1+2\mathcal{C} (\abs{\Psi}^2)\Psi = \mathbf{f} \text{ in }\Omega  \,\, \text{ and }\Psi= \mathbf{g} \text{ on } \partial \Omega,
	\end{align}
	where $\mathcal{C} = \frac{\tilde{C}}{\abs{A}}$ is a constant that depends on the material parameters $A$ and $\tilde{C}$ of the liquid crystal. The parameter values are $A= 0.172 \times 10^6 N/m^2$ and $\tilde{C}= 1.73 \times 10^6 N/m^2$ for the compound $5CB$.
	 The domain has an outer radius $L_0$ and the inner radius $0.5 L_0$, where $L_0$ is the characteristic length scale. The ratio ($\tilde{\rho}$) of the inner to the outer radius has been chosen to be $0.5$ to capture the radial solution analyzed by Bethuel \textit{et al.} \cite{Bethual_Colemn}. In \cite{Bethual_Colemn}, the authors study the Oseen-Frank radial solution in this annulus within the one-constant framework with energy given by $\mathcal{E}_{OF}(\mathbf{n})=K\int_{\Omega} \abs{\nabla \mathbf{n}}^2 \dx$, where $K>0$ is an elastic constant. The radial solution is simply given by $\mathbf{n} = \left( \frac{x}{\sqrt{x^2 + y^2}}, \frac{y}{\sqrt{x^2 + y^2}}\right)$.
	In the reduced Landau-de Gennes framework, the corresponding manufactured solution, $\Psi=(u,v)$, is defined by  
	$
	u=  \frac{2x^2}{(x^2 +y^2)} -1 \text{ and } v= \frac{2x y}{(x^2 +y^2)} 
	$, or equivalently $\Psi = \left(2\mathbf{n} \otimes \mathbf{n} - I \right)$, $\mathbf{n}$ is the Oseen-Frank radial solution studied in \cite{Dassbach_thesis} and $I$ is the $2\times 2$ identity matrix. 
	The data $\mathbf{f}$ and $ \mathbf{g}$ are calculated by substituting this manufactured solution into the left-hand side of \eqref{annular_equation}.
	\begin{figure}[H]
		\centering
		\subfloat[]{\includegraphics[scale = 0.21]{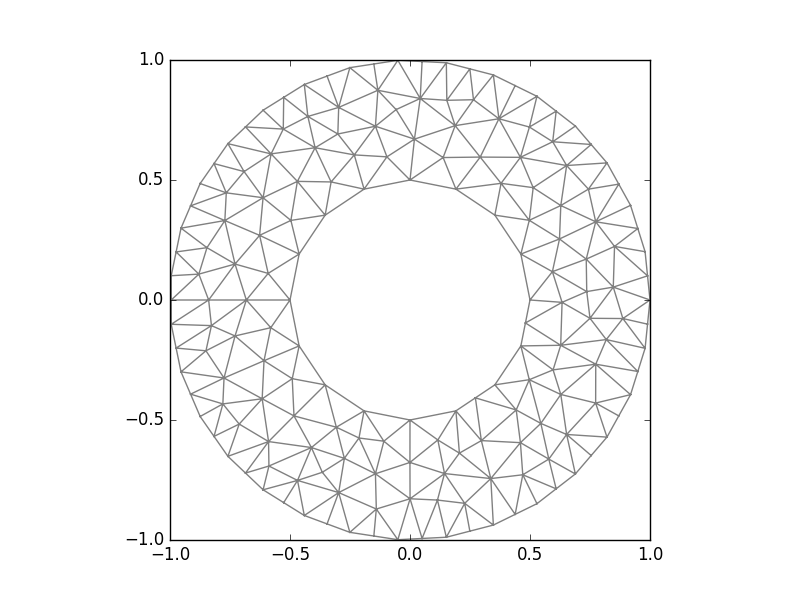} \label{Annularmesh}}
		\hspace{0.1cm}
		\subfloat[]{\includegraphics[scale = 0.15]{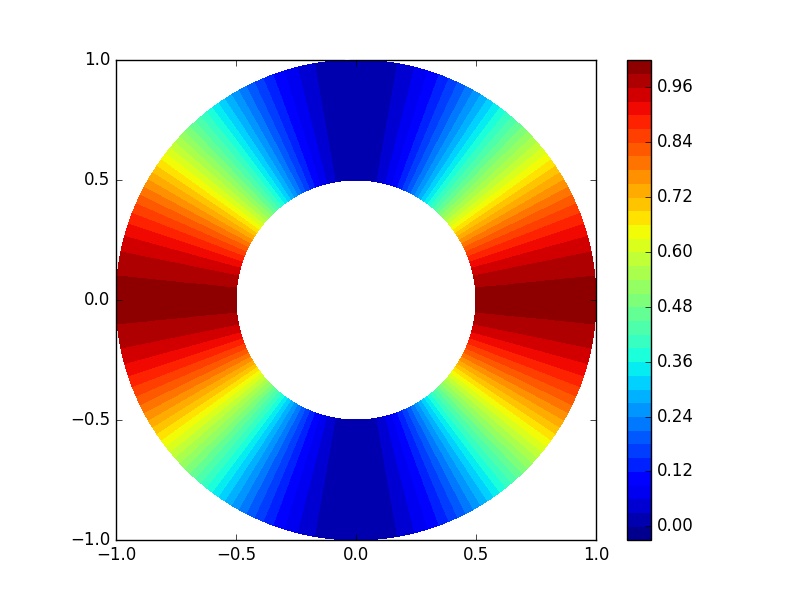}\label{u}} 
		\hspace{0.1cm}
		\subfloat[]{\includegraphics[scale = 0.15]{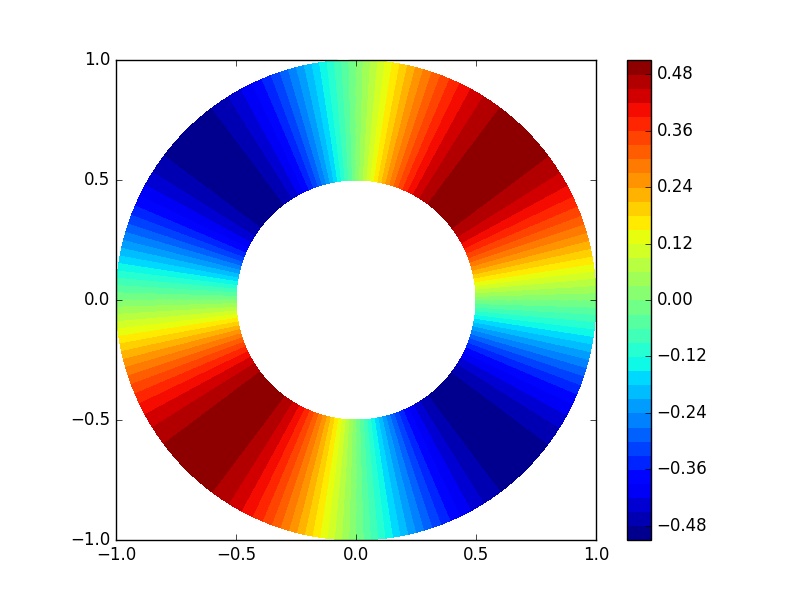}\label{v}}
		\caption{\protect \subref{Annularmesh}: Initial mesh for Annular domain with $\tilde{\rho}=0.5$,  solution components of $\Psi_{\rm dG}$ : \protect \subref{u} $u_{\rm dG}$, \protect\subref{v} $v_{\rm dG}$.
		}
		\label{solution_Annular}
	\end{figure} 	
\noindent We use dGFEM formulation based on piecewise linear polynomials over triangles, to study the convergence of the discrete solution to this manufactured solution. The mesh is generated using \textit{mshr}, the mesh generation component in FEniCS. The curved boundary is approximated using polygons. The errors and order of convergences in both energy and $\mathbf{L}^2$ norms  are tabulated in Table \ref{Annular_domain} and the results are consistent with our theoretical estimates. In Figure \ref{Annularmesh}, we plot the initial triangulation for the annular domain and in Figures~\ref{u} - \ref{v}, we plot the level curves of the corresponding converged radial solution.
\end{example}
\begin{table}[H]
	\centering
	\begin{tabular}{||c|c| c| c|c||} 
		\hline
		Max $h$   &      $\vertiiidg{\Psi-\Psi_{\dg}^{i}}$ &  Order &  $\vertiii{\Psi-\Psi_{\dg}^{i}}_{\mathbf{L}^2}$ &  Order  \\ [0.5ex] 
		\hline\hline
		0.195    &  0.19019778 & 1.02678772 & 0.70751443E-2  & 2.07685808    \\
		\hline
		0.999 E-1    &  0.89721415 E-1 & 1.01083542 & 0.11630918E-2  & 1.92959391    \\
		\hline
		0.333 E-1      &  0.29026972 E-1 &   0.99787440 & 0.13457407E-3  & 1.90301464 \\
		\hline
		0.199 E-1      &  0.17390702 E-1  & 0.99496176 & 0.49154393E-4  &  1.86299044  \\
		\hline
		0.142 E-1     &  0.12534273 E-1 & 1.00852040  & 0.26717823E-4 & 1.89491873  \\
		\hline                                                                 
		0.111 E-1    &  0.97316088 E-2  & - & 0.14469534E-4  & - \\
		\hline
	\end{tabular}
	\caption{ Numerical errors and convergence rates for solution of \eqref{annular_equation} in $\dg$ and $\mathbf{L}^2$ norm.
	}
	\label{Annular_domain}		
\end{table}		
\begin{rem}
	In \cite{Dassbach_thesis}, \eqref{annular_equation} is implemented using conforming FEM
 and the numerical orders of convergences in $\mathbf{L}^{\infty},$ $\mathbf{L}^2$ and $\mathbf{H}^1$ norms are observed to be of orders of $O(h^2),$ $ O(h^2) $ and $O(h)$, respectively. However, the theoretical error estimates are not derived in \cite{Dassbach_thesis}.  
\end{rem}
\section{Conclusion}
The paper focuses on a rigorous dG formulation of the model problem studied in \cite{MultistabilityApalachong, Tsakonas}. It is not clear if the dG formulation gives us any additional physical insight for this model problem, at least for micron-sized wells. However, dG methods combined with a posteriori error analysis generally work well for problems with less regularity and with non-homogeneous boundary conditions or for singular liquid crystal potentials as proposed by Katriel et.al \cite{Katriel_et_al} and studied in \cite{Ball}, or when $\epsilon \rightarrow 0$ (sharp interface limit). Hence, we expect our work to be foundational for subsequent cutting-edge numerical studies of interfacial phenomena, higher-dimensional defects, quenching phenomena for LCs which are necessarily characterized by singular behavior.  A particularly interesting application would be the  dG formulation of the continuum theory of ferronematics \cite{Ferromagnetic}. Ferronematics in confinement naturally exhibit domain walls and interior point defects and it would be interesting to see how dG results compare to numerical results from confoming FEM, particularly near structural defects e.g. nematic vortices and boundary layers.

	\bibliographystyle{amsplain}
	
	\section*{Acknowledgements}
	R.M. gratefully acknowledges support from institute Ph.D. fellowship and the Visiting Postgraduate Scholar scheme run by the University of Bath. A.M. gratefully acknowledges hospitality from IIT Bombay.
	A.M. is supported by an EPSRC Career Acceleration Fellowship EP/J001686/1 and EP/J001686/2, University of Bath Internationalisation schemes and an OCIAM Visiting Fellowship, the Keble Advanced Studies Centre. N.N. gratefully acknowledges support from the Faculty of Science at the University of Bath and the support by  DST SERB MATRICS grant MTR/2017/000 199. The authors gratefully
	acknowledge Prof. M. Vanninathan for the insightful discussions and also the HPC facility of IIT Bombay for the computational resources. The authors thank the referees for the constructive comments that has improved the quality of the paper significantly.

	\bibliography{ReferencesLDG}
\begin{appendices}	
	\section{Appendix}
	\subsection{Derivation of weak formulation}\label{Weak_formulation}
	The Landau-de Gennes energy functional considered in this article is given by 
	\begin{align*} 
	\mathcal{E}(\Psi) =\int_\Omega (\abs{\nabla \Psi}^2 +
	\epsilon^{-2}(\abs{\Psi}^2- 1)^2) \dx 
	\end{align*}
 with Dirichlet boundary condition $\Psi = \mathbf{g} $ on $\partial\Omega.$ Consider the real valued function $i[\tau]:=\mathcal{E}(\Psi + \tau \Phi) \, \, (\tau \in \mathbb{R})$
	with $\Phi\in \V.$ Since $\Psi$ is a minimizer of $\mathcal{E}(\cdot)$ and $\Psi + \tau \Phi =\Psi = \mathbf{g}$ on $\partial \Omega,$ $\mathcal{E}(\cdot)$ has a minimum at $\tau = 0.$ Therefore,  
		 $i'(0)=\lim_{\tau \rightarrow 0} \dfrac{\mathcal{E}(\Psi + \tau \Phi)-\mathcal{E}(\Psi )}{\tau} =0.$
	A manipulation using the definition of $\mathcal{E}(\cdot) $ leads to
	\begin{align*}
	\int_{\Omega} \nabla \Psi \cdot \nabla \Phi \dx + 2\epsilon^{-2} \int_{\Omega} (\abs{\Psi}^2-1)(\Psi \cdot \Phi)\dx=0 \quad \text{ for all } \Phi \in \V, 
	\end{align*}
	 which yields the weak formulation \eqref{eq:2.1.5} in operator form. The operator form in \eqref{eq:2.1.5} has a representation of the nonlinear part  in terms of $B(\cdot, \cdot, 
	 \cdot, \cdot)$ and this is crucial for an elegant analysis. 
	\subsection{Regularity result} \label{Regularity result}	
\begin{thm}\label{2.3.3.1} \cite{grisvard}  \cite{KesavaTopicsFunctinal}
	Let $\Omega$ be a convex, bounded and open subset of $\mathbb{R}^2$ with smooth boundary $\partial \Omega.$ 
	Then, for $F \in \mathbf{L}^2(\Omega)$ and $\mathbf{g} \in \mathbf{H}^{\frac{3}{2}}(\partial\Omega)$, there exists a $\Psi \in \mathbf{H}^2(\Omega)$ such that $-\Delta \Psi =F \text{ in } \Omega \text{ and }\Psi = \mathbf{g} \text{ on } \partial \Omega. $
\end{thm}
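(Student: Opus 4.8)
The plan is to reduce the inhomogeneous, vector-valued Dirichlet problem to a pair of decoupled scalar problems with homogeneous boundary data, for which classical $H^2$ elliptic regularity applies. First I would lift the boundary data: since $\mathbf{g} \in \mathbf{H}^{\frac{3}{2}}(\partial\Omega)$ and $\partial\Omega$ is smooth, the trace theorem furnishes an extension $\mathbf{G} \in \mathbf{H}^2(\Omega)$ with $\mathbf{G}|_{\partial\Omega} = \mathbf{g}$ and $\vertiii{\mathbf{G}}_2 \lesssim \vertiii{\mathbf{g}}_{\frac{3}{2},\partial\Omega}$. Setting $\mathbf{w} := \Psi - \mathbf{G}$, the problem becomes $-\Delta \mathbf{w} = F + \Delta \mathbf{G} =: \widetilde{F}$ in $\Omega$ with $\mathbf{w} = 0$ on $\partial\Omega$; note $\widetilde{F} \in \mathbf{L}^2(\Omega)$ because $\mathbf{G} \in \mathbf{H}^2(\Omega)$ guarantees $\Delta\mathbf{G} \in \mathbf{L}^2(\Omega)$. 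Since the Laplacian acts componentwise and there is no coupling between the two components, it suffices to treat each scalar equation $-\Delta w_i = \widetilde{F}_i$, $w_i \in H_0^1(\Omega)$, separately.

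For existence of a weak solution I would invoke the Lax--Milgram theorem on $\V = \mathbf{H}_0^1(\Omega)$: the bilinear form $A(\cdot,\cdot)$ is bounded and, by the Poincar\'e inequality (Lemma~\ref{4.2}), coercive as recorded in \eqref{2.3}, while $\Phi \mapsto (\widetilde{F},\Phi)$ is a bounded linear functional on $\V$. This yields a unique $\mathbf{w} \in \V$ satisfying $A(\mathbf{w},\Phi) = (\widetilde{F},\Phi)$ for all $\Phi \in \V$, that is, a weak solution attaining homogeneous boundary values.

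The crux is upgrading $\mathbf{w}$ from $\mathbf{H}^1_0$ to $\mathbf{H}^2$. For each component I would establish interior regularity by the method of difference quotients: testing the weak form against second-order difference quotients of $w_i$ and using the $\mathbf{L}^2$ bound on $\widetilde{F}_i$ yields $\norm{w_i}_{2,\Omega'} \lesssim \vertiii{\widetilde{F}}_0$ on every interior subdomain $\Omega' \Subset \Omega$. Up-to-the-boundary regularity then follows, since $\partial\Omega$ is smooth, by flattening the boundary with a local diffeomorphism and applying tangential difference quotients, the purely normal second derivative being recovered algebraically from the equation itself. This is precisely the content of the cited results \cite{grisvard, KesavaTopicsFunctinal}; convexity ensures the corresponding estimate persists without loss of regularity even when the smoothness of $\partial\Omega$ is relaxed to the convex polygonal setting relevant elsewhere in the paper. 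Assembling the two components gives $\mathbf{w} \in \mathbf{H}^2(\Omega)$ with $\vertiii{\mathbf{w}}_2 \lesssim \vertiii{\widetilde{F}}_0$, and hence $\Psi = \mathbf{w} + \mathbf{G} \in \mathbf{H}^2(\Omega)$ solves the stated problem.

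The main obstacle is the boundary regularity step. The difference-quotient argument directly controls only the tangential second derivatives, so one must carefully recover the normal second derivative from the PDE and bound the errors introduced by the boundary-flattening change of variables. For a merely convex domain this is the delicate point handled by Grisvard's theory, whereas the smoothness hypothesis on $\partial\Omega$ assumed here makes the standard boundary-flattening argument available more directly.
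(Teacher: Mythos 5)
Your proposal is correct, but be aware that the paper does not prove this statement at all: Theorem~\ref{2.3.3.1} is imported verbatim with citations to \cite{grisvard} and \cite{KesavaTopicsFunctinal}, and is used only as a black box in the proof of Lemma~\ref{regularity}. What you have written is precisely the standard argument contained in those references: lift $\mathbf{g}\in\mathbf{H}^{\frac{3}{2}}(\partial\Omega)$ to $\mathbf{G}\in\mathbf{H}^2(\Omega)$ by surjectivity of the trace map, reduce to componentwise homogeneous Poisson problems, get the weak solution from Lax--Milgram (boundedness and coercivity of $A(\cdot,\cdot)$ as in \eqref{2.3}, via Lemma~\ref{4.2}), and upgrade to $\mathbf{H}^2$ by interior difference quotients plus boundary flattening, recovering the pure normal second derivative algebraically from the equation. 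Your side remark about the hypotheses is also the right diagnosis: with $\partial\Omega$ smooth, convexity is redundant for $H^2$ regularity, and it is exactly when smoothness is dropped (as in Lemma~\ref{regularity}, where $\Omega$ is only Lipschitz and convex) that Grisvard's convexity theory is the operative ingredient. The one point your sketch glosses over, though it lies outside the statement as given, is that in the nonsmooth convex setting the lifting step itself becomes delicate: on a polygon, extending $\mathbf{H}^{\frac{3}{2}}(\partial\Omega)$ data to $\mathbf{H}^2(\Omega)$ requires compatibility conditions at the corners, so the reduction-to-homogeneous-data strategy cannot be transplanted verbatim; this is handled in \cite{grisvard} and is implicitly what the paper relies on when it invokes the theorem for the convex Lipschitz domains of Lemma~\ref{regularity}.
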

\begin{proof}[\textbf{Proof of Lemma \ref{regularity}}]
	Rewrite the system \eqref{2.3.1.2}  as: $	-\Delta \Psi =F_1(\Psi) \text{ in } \Omega  \text{ and } \Psi = \mathbf{g} \text{ on } \partial \Omega, $
	where $F_1(\Psi)= -\dfrac{2}{\epsilon^2}(\abs{\Psi}^2-1)\Psi$. 
	Expand the expression for $F_1(\Psi) $, where $\Psi=(u,v) \in \mathbf{H}^1(\Omega)$,  use the Sobolev embedding result $H^1(\Omega) \hookrightarrow L^p(\Omega) $ for all $p \geq 1$, and the 
	 H\"older's inequality to prove that $F_1(\Psi) \in \mathbf{L}^2(\Omega).$	 
	Now a use of Theorem \ref{2.3.3.1} and a bootstrapping argument \cite{Evance19} imply that the solution $\Psi $ of \eqref{2.3.1.2} belongs to $\mathbf{H}^{2}(\Omega)$. 
\end{proof}
\subsection{Details of proof of Theorem \ref{2.5.2}}\label{Brouwer}
\begin{thm}[Brouwer fixed point theorem]\label{Schauder}\cite{KesavaTopicsFunctinal}
Let $Y$ be a finite dimensional Hilbert space  and   $f: K \rightarrow K$ be a continuous map from a non-empty, compact and convex subset $K$ of $Y$ which maps into $K.$ Then $f $ has a fixed point on $K$.	
\end{thm}
A use of Lemma \ref{4.3} yields that $\mu_{\dg}$ is Lipschitz continuous on the ball $\mathbb{B}_{R(h)}({\rm I}_{\rm dG}\Psi)$.
 Set $K= \mathbb{B}_{R(h)}({\rm I}_{\rm dG}\Psi)$ and $Y=\V_h$. Theorem \ref{thm2.5.1} yields $\mu_{\dg}$ maps $K $ to itself. Therefore, an application of Theorem \ref{Schauder} yields that $\mu_{\dg}$  has a fixed point, say $\Psi_{\rm dG}$. Now we prove any fixed point of $\mu_{\dg}$ is a solution of \eqref{2.3.13}.
 Since $\Psi_{\rm dG }$ is a fixed point of the map $\mu_{\dg} $, we have 
\begin{align*}
\dual{{\rm DN}_h({{\rm I}_{\rm dG}\Psi}) \mu_{\dg}(\Psi_{\dg}),\Phi_{\rm dG} } = \dual{{\rm DN}_h({{\rm I}_{\rm dG}\Psi}) \Psi_{\dg},\Phi_{\rm dG} }\,\, \text{ for all } \Phi_{\rm dG}  \in \V_{h}.
\end{align*}
A use of \eqref{2.5.1} on the left hand side and definition of $\dual{DN_h({{\rm I}_{\rm dG}\Psi}) \cdot,\cdot }$ on the right hand side leads to 
\begin{align*}
 3B_\dg(\textrm{I}_{\dg}\Psi, \textrm{I}_{\dg}\Psi,\Psi_{\dg},\Phi_{\dg})   - B_\dg(\Psi_{\dg},\Psi_{\dg}, \Psi_{\dg},\Phi_{\dg}) +L_\dg(\Phi_{\dg}) &=A_{\rm dG}(\Psi_{\dg},\Phi_{\rm dG})+C_{\rm dG}(\Psi_{\dg},\Phi_{\rm dG}) \\& \quad +3B_{\rm dG}(\textrm{I}_\dg\Psi,\textrm{I}_\dg\Psi,\Psi_{\dg},\Phi_{\rm dG})
.
\end{align*} 
This implies $\Psi_{\rm dG}$ solves \eqref{2.3.13}. The uniqueness of the solution $\Psi_{\rm dG}$ follows from the contraction result in Lemma \ref{4.3}.
\subsection{Implementation procedure}\label{algorithm}
We compute the approximate solutions $\Psi_{\rm dG }$ using Newton's method. To compute the initial guess $\Psi^0_{\rm dG }$ in Example \ref{Luo_exampple}, first solve the Oseen-Frank system 
\begin{align}\label{initial guess}
-\Delta \theta =0 \, \, \text{in } \Omega \text{ and }  \theta =g_D \,\, \text{on } \partial\Omega
\end{align}
with  the corresponding Dirichlet boundary condition, say $g_D$. The details of boundary functions corresponding to each diagonal and rotated solution can be found in Table \ref{table:initial}.
The dG formulation corresponding to \eqref{initial guess} seeks $\theta_\dg \in V_h$ such that for all $\phi_\dg \in V_h,$
\begin{align} \label{initial dg}
&\sum_{ T \in \mathcal{T}} \int_T \nabla \theta_\dg \cdot \nabla \phi_{\rm dG} \dx - \sum_{ E \in \mathcal{E}}\int_{ E} \{\frac{\partial \theta_\dg}{\partial \eta}\} [\phi_{\rm dG}] \ds +\lambda 
\sum_{E \in \mathcal{E}} \int_{E} \{\frac{\partial \phi_{\rm dG}}{\partial \eta}\} [\theta_\dg] \ds+ \sum_{E \in \mathcal{E}} \int_{E} \frac{\sigma}{h} [\theta_\dg] [ \phi_{\rm dG}] \ds \notag\\& 
= \lambda \sum_{E \in \mathcal{E}_D} \int_{E} \frac{\partial \phi_{\rm dG}}{\partial \eta} g_D \ds+ \sum_{E \in \mathcal{E}_D} \int_{E}\frac{\sigma}{h}  \phi_{\rm dG} g_D\ds. 
\end{align}
\textbf{Algorithm }\\
\begin{algorithm}[H]
	\SetAlgoLined
	Let $\mathcal{T}_0$ denote the initial triangulation \;
	\For{$ \mathcal{T}_j, j=1,2, \ldots , n$}{
		 Assemble the element matrices from \eqref{initial dg}   \;
		Solve \eqref{initial dg} using Krylov solver \;
		Compute initial guess $\Psi_{\dg }^0$ as given in \eqref{initial_guess}\;
		\While{($k< $ maximum iteratioin, error $>$ tolerance) }{
			Solve for Newton iterates $\dual{{\rm DN}_h(\Psi_{\dg }^k)\delta \Psi, \Phi_\dg}= - N_h(\Psi_{\dg }^k, \Phi_{\dg})$\;
			Update $\Psi_{\dg }^{k+1}= \Psi_{\dg }^k + \delta \Psi$
    }
Compute dG and $L^2$ norm errors and order of convergences.
	}
\label{Algorithm to find initial guess:}
\end{algorithm}
\noindent In Examples  \ref{Polynomial_example} and \ref{Annular_example}, where the exact solutions are known, the initial guesses for the Newton's method are chosen as the solutions of the corresponding linear systems.
\end{appendices}

\end{document}